\newtheorem{theorem}{Theorem}[section]
\newtheorem{lemma}[theorem]{Lemma}
\theoremstyle{definition}
\newtheorem{definition}[theorem]{Definition}
\newtheorem{corollary}[theorem]{Corollary}
\newtheorem{proposition}[theorem]{Proposition}
\theoremstyle{remark}
\newtheorem{remark}[theorem]{Remark}
\numberwithin{equation}{section}
\newcommand{\R}{\mathbb{R}}
\newcommand{\Z}{\mathbb{Z}}
\newcommand{\PGL}{\mathrm{PGL}}
\newcommand{\Aut}{\mathrm{Aut}}
\newcommand{\E}{\mathbb{E}}
\newcommand{\Isom}{\mathrm{Isom}}
\newcommand{\RP}{\mathbb{R} \textrm{P}}
\begin{document}

\title{Codimension-$1$ Simplices in Divisible Convex Domains}

\author{Martin D. Bobb}
\address{Department of Mathematics,••••••••
The University of Texas at Austin,
2515 Speedway, RLM 8.100,
Austin, TX 78712}

\begin{abstract}
Properly embedded simplices in a convex divisible domain $\Omega\subset\RP^d$ behave somewhat like flats in Riemannian manifolds \cite{Schroeder}, so we call them \textit{flats}. We show that the set of codimension-$1$ flats has image which is a finite collection of disjoint virtual $(d-1)$-tori in the compact quotient manifold. If this collection of virtual tori is non-empty, then the components of its complement are cusped convex projective manifolds with type $d$ cusps.
\end{abstract}

\maketitle

\let\thefootnote\relax\footnote{MSC(2010): 57M50, 57N16, 20H10.}

\section{Introduction}

A \textit{convex projective domain} (or \textit{set}) is a subset of real projective space which is convex, open, and properly contained in an affine chart. Such a domain $\Omega$ has a metric defined in terms of the projectively invariant cross-ratio (the \textit{Hilbert metric}), and hence the group $\Aut(\Omega)=\{g\in \PGL(d+1,\R) ~\vert~ g.\Omega=\Omega\}$ acts by isometries. If we take $\Omega$ to be a round ball, this is the projective model of $d$-dimensional real hyperbolic space.

When $\Omega$ is a convex set which is not equivalent to an ellipse, the resulting geometry is less uniform (we do not expect $\Aut(\Omega)$ to act transitively, or even to be non-trivial in general). We will take an interest in those domains $\Omega$ which permit cocompact actions by discrete subgroups of $\Aut(\Omega)$. Such a domain is called \textit{divisible}. For a general resource on divisible convex domains, we direct the reader to Benoist' survey \cite{Benoist_survey}.

If we think of convex projective geometry as a generalization of hyperbolic geometry, then we expect it to exhibit properties common to negatively curved spaces. While this is true to some extent, it is more accurate to say that convex projective domains are akin to non-positively curved spaces. A simplex $\Delta$ in $\Omega$ is \textit{properly embedded} when $\partial\Delta\subset\partial\Omega$. The Hilbert metric on a simplex is bi-Lipschitz to the metric on Euclidean space, and properly embedded simplices behave somewhat like flat subspaces of Riemannian spaces. There is a successful industry in Riemannian geometry focused on understanding topological and geometric aspects of non-positively curved manifolds by examining the structure of their zero-curvature subspaces (for example \cite{Schroeder_lowdimflats, Schroeder}). We apply similar concepts to understand divisible convex projective domains by considering their properly embedded codimension-$1$ simplices.

This work was originally inspired by Benoist' 2006 result \cite{Benoist}. Part of this result is that when an irreducible divisible convex projective domain in $\RP^3$ contains properly embedded triangles and its dividing group is torsion-free, then the quotient manifold is topologically a collection of cusped hyperbolic $3$-manifolds glued along cusp cross-sections, and the triangles in $\Omega$ are the covers of these cross sections. The method of proof involves careful analysis of segments in $\partial \Omega$ (and its dual) combined with a theorem of Morgan-Shalen granting information about $3$-manifold group actions on an $\R$-tree \cite{Morgan-Shalen} (the $\R$-tree which is dual to the set of triangles in $\Omega\subset\RP^3$). Benoist uses this data to show that the triangles in $\Omega$ cannot accumulate: that they are isolated from one another. This is enough to show that the quotients of these triangles must be disjoint virtual tori. These tori are exactly the embedded incompressible tori of the manifold's JSJ-decomposition \cite{Jaco-Shalen, Johannson}, and various data provided by Benoist precludes the constituents of this decomposition from supporting any of the $8$ Thurston geometries besides hyperbolic.

This is a beautiful theorem, and it is not immediately obvious what should generalize to higher dimensions. In particular, there is no obvious way to apply the technology of Morgan-Shalen (or any analogue) to the higher dimensional setting.

This paper demonstrates the following decomposition principle for divisible convex domains in dimension greater than or equal to $3$, which generalizes the decomposition provided by Benoist in dimension 3.

\theoremstyle{theorem}
\newtheorem*{main theorem}{Theorem \ref{theorem: main theorem}}
\begin{main theorem}
If $d\geq 3$ and $\Omega\subset \RP^d$ is an irreducible divisible convex domain, then the set $\mathscr{F}$ of properly embedded simplices of dimension $(d-1)$ is discrete. Let $\Gamma$ be a discrete group of projective transformations preserving and acting cocompactly on $\Omega$, and let $M=\Gamma \backslash \Omega$ (a manifold or orbifold). If $\mathscr{F}$ is non-empty, then its image $p(\mathscr{F})\subset M$  is a finite collection of disjoint virtual $(d-1)$-tori and the complement of $p(\mathscr{F})$ is a union of cusped convex projective manifolds whose cusps have diagonalizable holonomy.
\end{main theorem}

For a discussion of convex projective cusps see Section \ref{section: cusps} and \cite{Ballas-Cooper-Leitner, Cooper-Long-Tillmann}.

It may be shown from Benzecri's work in the 1960's that the elements of $\mathscr{F}$ do not interesect (although he did not directly state this) \cite{Benzecri}. The majority of the work is to show that codimension-$1$ properly embedded simplices form a discrete set (ie do not accumulate) in $\Omega$, and hence have compact image (which are necessarily virtually $(d-1)$-tori) (Section \ref{section: the hard work}).

\subsection{Outline of the proof of the main theorem}

Let $\mathscr{F}$ be the set of flats (codimension-1 properly embedded simplices) in $\Omega$, an irreducible convex divisible set with $\Gamma$ dividing $\Omega$.  Benzecri tells us at the outset that $F$ and $F' \in \mathscr{F}$ are disjoint, and have disjoint boundary as well \cite{Benoist} \cite{Benzecri}. Benoist makes the observation that if $F\in \mathscr{F}$ has that $stab(F)$ is not finite, then $F$ is isolated in $\mathscr{F}$ (Lemma \ref{lemma: stabilizer gives isolated}). Our goal is thus to find elements of $\Gamma$ stabilizing flats. We proceed as follows:

\begin{enumerate}
\item Let $\mathscr{F}_\partial \subset \mathscr{F}$ be the set of flats which are part of the boundary of some region in $\Omega\setminus \cup (F\in\mathscr{F})$ (\emph{boundary flats}). We adapt a lemma of Schroeder \cite{Schroeder} to the convex projective setting to demonstrate that \textbf{if $F$ is a boundary flat then $stab(F)>\Z^{d-1}$} (Proposition \ref{prop: boundary full rank}). 
\item We may now address the entirety of $\mathscr{F}_\partial$, and a brief homological argument combined with a geometric observation shows that \textbf{the set of all boundary flats $\mathscr{F}_\partial$ does not accumulate} (Proposition \ref{prop: boundary flats don't accumulate}). 
\item Next we show  that \textbf{$\Omega$ has no subsets foliated by flats}, which involves an application of Vey's irreducibility theorem and Proposition \ref{prop: boundary flats don't accumulate}. This shows that $\mathscr{F}_\partial = \mathscr{F}$ and that \textbf{$\mathscr{F}$ does not accumulate} (Proposition \ref{prop: isolated or foliated} and Corollary \ref{corollary: no completely foliated omega}). 
\item The rest of the main theorem, that \textbf{the image of $\mathscr{F}$ is a union of virtual tori and their complement is a collection of cusped convex projective manifolds with type $d$ cusps} follows without difficulty from the definitions (Section \ref{section: main theorem}).
\end{enumerate}

\section{Background}\label{section: background}

Let $\R P^d$ be the $d$-dimensional real projective space. That is for $v\in \R^{d+1}$ let $[v]=\{kv\in \R^{d+1} ~\vert~ k\in \R\}$ and take

$$\RP^d=\{[v] ~\vert~ v\in \R^{d+1} \setminus \{0\}\}.$$

The group $\PGL(d+1,\R)$ acts on $\RP^d$ with trivial kernel. The manifold $\RP^d$ has natural affine charts associated to elements of the dual projective space $(\RP^{d})^*$ (the projectivization of the dual vector space, $(\R^{d+1})^*$). That is, for each $[\eta]\in (\RP^d)^*$, there is a `stereographic projection' from $\RP^d\setminus [\ker(\eta)]$ to $A_\eta=\{v\in \R^{d+1}~\vert~ \eta(v)=1\}$ which takes $[v]$ to its unique representative so that $\eta(v)=1$. The name `affine chart' is due to the fact that the subgroup of $\PGL(d+1,\R)$ preserving $A_\eta$ is conjugate to the usual affine matrix group.

\begin{definition}
A \emph{properly convex projective domain} $\Omega$ is an open convex subset of $\RP^d$ whose closure is contained in some affine chart. The automorphism group of such a set is $\Aut(\Omega)=\{g\in\PGL(d+1,\R)~\vert~ g.\Omega=\Omega\}$.
\end{definition}

By \textit{convex}, we mean that every pair of points in $\Omega$ is contained in a segment which is also in $\Omega$.

These subsets of projective space are of particular interest because they may be equipped with an $\Aut(\Omega)$-invariant metric called the \emph{Hilbert metric} and denoted $d_\Omega$ (or simply $d$ when it is clear from context). To find $d_\Omega(x,y)$, consider the projective line containing $x$ and $y$. Due to convexity and properness of $\Omega$, there are two unique points $z_1$ and $z_2$ in $\partial \Omega$ and on the line containing $x$ and $y$ so that in a chart containing $\Omega$, these points appear in the order $z_1, x, y, z_2$. The Hilbert distance from $x$ to $y$ is then defined to be
$$d_\Omega(x,y)=\frac{1}{2} \log [z_1,x,y,z_2]$$
where $[\cdot,\cdot,\cdot,\cdot ]$ is the projective cross-ratio of four points. The reader is referred to Marquis' survey for more information about the Hilbert metric and general information about the geometry of convex domains \cite{Marquis}. 

The classical example of such a domain is the projective (Klein) model of hyperbolic geometry $\mathbb{H}^d$ found by taking $\Omega$ to be an ellipsoid (every such subset is projectively equivalent). In the case of hyperbolic space, the Hilbert metric is induced by a Riemannian metric, but this is not generally the case. It is true, however, that the Hilbert metric is induced by a norm on the tangent space $\Omega$. This gives $\Omega$ the structure of a \emph{Finsler} manifold (a manifold equipped with a continuously varying norm on its tangent space) \cite{Marquis}.

\subsection{Some convex projective geometry facts}

This subsection contains some necessary information about projective geometry. Most or possibly all of the information in this section was or is known to Benzecri and later to Benoist. However, for clarity and to help overcome any language barriers with respect to references, we include it. Perhaps the most important of these theorems is the following, due to Benzecri in the 1960's. Let $X_0$ be the set of $d$-dimensional pointed properly convex projective domains. This set may be topologized using the Hausdorff distance after choosing a metric on $\RP^d$ (there is no projectively invariant Riemannian metric on $\RP^n$ but the \textit{topology} induced on the set of closed subsets does not depend on this choice).

\begin{theorem}[Benzecri cocompactness \cite{Benzecri}]\label{theorem: Benzecri ccpt}
The group $\PGL(n+1,\R)$ acts properly and cocompactly on $X_0$.
\end{theorem}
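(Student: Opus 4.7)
The plan is to prove properness and cocompactness separately. Properness is a reasonably direct limit argument at the level of linear representatives, while cocompactness requires constructing a canonical, $\PGL(d+1,\R)$-equivariant normalization of pointed properly convex domains.

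For cocompactness, given $(\Omega,x)\in X_0$, I would pick an affine chart containing $\overline{\Omega}$ so that $\Omega$ becomes a bounded convex body. The classical John--Löwner theorem produces an inscribed ellipsoid $E\subset\Omega$ of maximal volume, which satisfies $\Omega\subset c_E+d(E-c_E)$, where $c_E$ is its center. I would then choose $g\in\PGL(d+1,\R)$ sending $E$ to the standard unit ball $B\subset\R^d\subset\RP^d$ and use the residual $O(d)$-ambiguity, followed by a radial normalization, to place $g(x)$ at a canonical point in $B$. The resulting family of normalized pointed domains satisfies $B\subset\Omega'\subset dB$ with $x'$ in a fixed compact subset of $B$. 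Blaschke's selection theorem, combined with the fact that proper convexity is a closed condition in the Hausdorff topology, then shows that this normalized family is compact in $X_0$, giving cocompactness.

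For properness, suppose $(\Omega_n,x_n)\to(\Omega',x')$ and $g_n\cdot(\Omega_n,x_n)\to(\Omega,x)$ both in $X_0$, while $g_n$ admits no convergent subsequence in $\PGL(d+1,\R)$. Lift to $A_n\in GL(d+1,\R)$ of unit operator norm and pass to a subsequence with $A_n\to A$ of unit norm. Since $[A_n]$ does not converge projectively, $A$ must be singular, so $\mathrm{im}(A)$ is a proper subspace; hence the image of $\overline{\Omega'}\setminus[\ker A]$ under $[A]$ lies in a proper projective hyperplane. Since this image should carry the Hausdorff limit $\overline{\Omega}$, which has nonempty interior, we reach a contradiction. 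The main obstacle I anticipate is in the cocompactness step: the John ellipsoid is not canonically centered at the prescribed basepoint $x$, so arranging a genuinely $\PGL$-equivariant normalization that simultaneously handles $\Omega$ and $x$ requires care. Working with the unit ball of the Hilbert tangent norm at $x$ (a convex body in $T_x\Omega$) and using its John ellipsoid yields a basepoint-centered canonical ellipsoid, and this is the cleanest route through the difficulty.
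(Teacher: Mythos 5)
The paper does not prove this statement at all: it is quoted verbatim from Benz\'ecri, so the only meaningful comparison is with the classical argument, and measured against that your sketch has two genuine gaps, both in places you partly sensed. The decisive one is the cocompactness normalization. After sending the John ellipsoid of $\Omega$ to the unit ball you have no move left that fixes the sandwich $B\subset\Omega'\subset dB$ and also puts the basepoint at a canonical interior point: the residual $O(d)$ only rotates $x'$ at fixed distance from the centre, and any further ``radial normalization'' is a non-orthogonal projective map that destroys the sandwich; so $x'$ can sit Hausdorff-close to $\partial\Omega'$ and limits of your normalized family exit $X_0$. Your proposed repair (John ellipsoid of the Hilbert unit tangent ball at $x$) does not close this gap either: the stabilizer of $x=[1:0:\dots:0]$ in $\PGL(d+1,\R)$ contains the noncompact unipotent subgroup $u_b\colon[v_0:v]\mapsto[v_0+b\cdot v:v]$, whose differential at $x$ is the identity, so every $u_b$ preserves any tangent-space normalization while moving the domain out of every compact subset of $X_0$ (already in $\RP^1$: $u_b$ applied to the interval $(-1,1)$ based at $0$ produces domains degenerating to the complement of a point). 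The missing classical idea is that the choice of hyperplane at infinity must be adapted to the \emph{pair} $(\Omega,x)$ -- e.g.\ choose the chart in which $x$ is the centroid of $\Omega$, or take the tangent hyperplane at (a lift of) $x$ to the level hypersurface of Vinberg's characteristic function -- and only then apply John's theorem centred at $x$; this is exactly what kills the unipotent freedom and yields a compact normalized family, equivalently an equivariant continuous map from $X_0$ to the space of pointed ellipsoids.

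The properness sketch has a related non sequitur. From $A_n\to A$ singular you only get uniform convergence $[A_n]\to[A]$ on compacta avoiding $[\ker A]$, hence $[A](K)\subset\overline{\Omega}$ for compact $K\subset\Omega'\setminus[\ker A]$. Points of $\Omega_n$ that approach $[\ker A]$ are not controlled and are generally sent far from $[\mathrm{im}\,A]$ -- they must be, since $g_n\Omega_n$ has nonempty interior -- so the Hausdorff limit $\overline{\Omega}$ is \emph{not} ``carried by'' $[A](\overline{\Omega'}\setminus[\ker A])$ and does not lie in a proper subspace; the contradiction you announce never materializes, and finishing along these lines requires a genuinely more careful degeneration analysis (tracking both $A$ and the limit $B$ of normalized inverses, with $\mathrm{im}\,A\subset\ker B$ and $\mathrm{im}\,B\subset\ker A$). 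The standard and much cleaner route is to deduce properness from the same equivariant pointed-ellipsoid map used for cocompactness: the space of pointed ellipsoids is a homogeneous space of $\PGL(d+1,\R)$ with compact point stabilizer, the action there is proper, and properness pulls back along a continuous equivariant map. So the one construction you are missing -- a basepoint-adapted canonical chart and ellipsoid -- is in fact the heart of Benz\'ecri's theorem, and both halves of your argument hinge on it.
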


Duality is a useful feature of projective geometry. Recall that $(\RP^d)^*$ (the space got by projectivizing $(\R^{n+1})^*$) may be naturally identified with the set of hyperplanes in $\RP^d$ by taking a class of dual vectors $[\eta]$ to the set $[\ker(\eta)]\subset \RP^d$. Note that this identification is well-defined and equivariant with respect to the action of $\PGL(d+1,\R)$ on $(\RP^d)^*$ (if $(\RP^d)^*$ is equivalence classes of row vectors, then $\PGL(d+1,\R)$ acts by right multiplication by inverse matrices).

An important observation is that if $\Omega$ is a properly convex domain, then the set of hyperplanes which are disjoint from $\overline{\Omega}$ forms a properly convex domain in $(\RP^d)^*$.

\begin{definition}[Dual domain]\label{definition: dual domain}
For any $\Omega$ a properly convex projective domain, let the \textit{dual domain} $\Omega^*=\{[\eta] ~\vert~ \ker(\eta)\cap \overline{\Omega}=\emptyset\}\subset(\RP^d)^*$.
\end{definition}

It is an exercise to verify the claim that $\Omega^*$ is a properly convex projective domain. First take a lift of $\Omega$ to its double cover, the projective sphere. Then, write representative functionals for a pair of hyperplanes in $\Omega^*$ which evaluate positively on $\Omega$, and observe that a convex combination of such functionals also evaluates positively on $\Omega$ as well. For properness of $\Omega^*$, note that any point in $\Omega$ will serve as a chart properly containing $\Omega^*$.

\subsection{Irreducibility and divisible convex domains}
As stated in the introduction, our interest is convex domains which admit quotients to compact manifolds (or orbifolds). 

\begin{definition}
If there exists a discrete subgroup $\Gamma < \Aut(\Omega)$ so that the quotient $\Gamma \backslash \Omega$ is compact, then $\Omega$ is \textit{divisible} and $\Gamma$ \textit{divides} $\Omega$.
\end{definition}

Discreteness of $\Gamma$ is enough to show that the quotient $\Gamma \backslash \Omega$ is a manifold (or orbifold), because $\Omega$ with the Hilbert metric is a proper metric space. If $\Gamma$ divides $\Omega$, then so does any finite index subgroup $\Gamma'<\Gamma$, by covering theory.

It is also necessary to discuss irreducibility of divisible domains. Let us call a component of the preimage of $\Omega$ in $\R^{d+1}$ under its projection to $\RP^d$ the \textit{cone} of $\Omega$ and denote it $C(\Omega)$. If $C(\Omega)$ can be written as a direct sum of convex cones $C(\Omega)=C_1\oplus C_2$ where $C_i\subset V_i \cap C(\Omega)$, with $V_i$  proper vector subspaces of $\R^{d+1}=V_1\oplus V_2$ then $\Omega$ is called \textit{reducible}. Otherwise, $\Omega$ is \textit{irreducible} (or sometimes \textit{indecomposable}) \cite{Benoist_survey}.

\subsection{Persistent notation in this paper}
For the remainder of the paper, we will discuss irreducible divisible convex sets of dimension $d\geq 3$. Thus, \textbf{the following notation will persist from here forward}, unless otherwise noted: $\Omega$ is an irreducible divisible convex set, $\Gamma$ is a subgroup of $\PGL(d+1,\R)$ dividing $\Omega$, $M$ is the quotient, and $p:\Omega\rightarrow M$ the quotient map. We make the standing assumption that $\Gamma$ is torsion-free. This does not restrict the class of divisible domains we are considering due to Selberg's lemma, and effects the resulting quotients $M$ by taking a finite cover \cite{Nica}. This implies that $M$ is a manifold rather than an orbifold.

\subsection{Faces and dual faces}
Let the reader be warned that the French convention is that a \textit{face} of a convex body is what is called a \textit{facet} in English, while the French \textit{facette} is called a \textit{face} in English. In particular, this painful discrepency exists between this paper and those of Benoist and Benzecri \cite{Benoist, Benzecri}.

The bulk of this information is addressed in dimension $3$ in Benoist \cite{Benoist}. In fact, the following three paragraphs are translated almost directly from Section 2.1 of \cite{Benoist}. 

A \emph{face} of a domain $\Omega$ is the intersection of $\partial \Omega$ with $P$ where $P$ is a hyperplane not intersecting  $\Omega$. The \emph{supporting sub-space} of a face is the smallest projective sub-space containing it. The \emph{cosupport} of a face is the intersection of all hyperplanes containing it. A face is \emph{angular} when its supporting sub-space and cosupport coincide.

Since faces are convex and contained in affine charts, they are homeomorphic to balls, and so have a well-defined dimension. So we will call a face whose dimension is $k$ a \emph{$k$-face}. A $(d-1)$-face is called a \emph{facet}. 

We will say that a hyperplane $\eta \in (\RP^d)^*$ \emph{supports} a face $K$ or a point $x\in \partial \Omega$ if it does not intersect $\Omega$ but contains $K$ or $x$, respectively. By the interior of a face $K$, or $\textrm{int}(K)$, we will mean the relative interior of $K$, and by $\partial K$ we will mean the complement in $K$ of its interior.

The same definitions apply to $\Omega^*$ the dual domain, where the roles of points and hyperplanes are reversed (e.g., a face of $\Omega^*$ is the intersection of $\partial{\Omega^*}$ with a point not intersecting $\Omega^*$, and so forth).

It should be noted that one may construct a convex body $\Lambda$ with a face $K$ so that $K$ as a convex body has a face $F$, but so that $F$ is not a face of $\Lambda$ (in $\R^2$, the convex hull of a circle and a point outside of the circle is such an example). Less technically, the statement `a face of a face is a face' is not always true when using this definition of a face. One may rectify this by taking an intrinsic definition of faces, and call what is herein referred to as a face an \textit{exposed} face, but we will not do so. What is true is that every point in $\partial \Omega$ is contained in \textit{some} face. See \cite{Rockafellar} for a discussion of faces of convex bodies.

The following is another result of Benzecri, and is critical here. Benoist is responsible for distilling and phrasing this result in modern language.

\begin{theorem}\label{theorem: no angular faces}\cite{Benzecri, Benoist}
If $\Omega$ is an irreducible convex divisible set, it has no angular faces. In particular, it has no facets at all, and each face of dimension $(d-2)$ has a unique supporting hyperplane.
\end{theorem}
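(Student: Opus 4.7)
Both ``in particular'' conclusions reduce to the single assertion that $\Omega$ has no angular face. A facet is automatically angular: its supporting subspace has dimension $d-1$, so the cosupport (which always contains the supporting subspace and is itself at most $(d-1)$-dimensional) must coincide with it. A $(d-2)$-face has cosupport of dimension at least $d-2$; it fails to have a unique supporting hyperplane precisely when at least two distinct hyperplanes contain it, which forces the cosupport to drop from $d-1$ to $d-2$ and hence to equal the $(d-2)$-dimensional supporting subspace, i.e., the face is angular. So it suffices to rule out angular faces in general.

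I would argue by contradiction. Suppose $K$ is an angular $k$-face, so its supporting subspace equals its cosupport: call this common $k$-dimensional projective subspace $L$. Then one can pick $d-k$ supporting hyperplanes $H_1, \ldots, H_{d-k}$ of $\Omega$ containing $L$ and meeting precisely along $L$. In affine coordinates placing $L = \{x_{k+1} = \cdots = x_d = 0\}$ and $H_j = \{x_{k+j} = 0\}$, the domain $\Omega$ is locally contained in the open orthant $\{x_{k+1}, \ldots, x_d > 0\}$ near $L$, so $K$ appears as a corner of $\Omega$. The plan is to exploit this corner by rescaling. Fix $x_0 \in \mathrm{int}(K)$ and a sequence $y_n \in \Omega$ with $y_n \to x_0$, then apply Benzecri cocompactness (Theorem~\ref{theorem: Benzecri ccpt}) to find $h_n \in \PGL(d+1,\R)$ such that the pointed properly convex domains $(h_n y_n, \overline{h_n \Omega})$ subconverge to a limit $(y_\infty, \overline{\Omega_\infty})$. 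Since the corner structure at $L$ is scale-invariant under the diagonal torus fixing $L$ pointwise, the $h_n$ can be taken within this torus (rescaling to match the transverse scale of $y_n$). I then expect $\Omega_\infty$ to inherit a projective join structure whose cone admits a nontrivial direct sum decomposition $C(\Omega_\infty) = V_1 \oplus V_2$ along the $L$-versus-transverse split, since the limit shape becomes invariant under scaling in the transverse directions.

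The final step is to pull the limiting decomposition back to $\Omega$ itself using divisibility. The conjugate groups $h_n \Gamma h_n^{-1}$ act on $h_n \Omega$, and in the Benzecri limit they should subconverge to a nontrivial discrete group acting on $\Omega_\infty$ that preserves $V_1 \oplus V_2$. The main obstacle is converting this into a contradiction with the irreducibility of $\Omega$ itself: the geometric degeneration producing the split of $\Omega_\infty$ is relatively transparent, but transferring a split from the limit back to $C(\Omega)$ requires genuine algebraic input. In the classical treatment (Benzecri, Benoist), one invokes Vey's irreducibility theorem to conclude that the Zariski closure of $\Gamma$ acts irreducibly on $\R^{d+1}$, which is incompatible with the nontrivial invariant decomposition produced in the limit and gives the desired contradiction.
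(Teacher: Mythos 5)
Your reduction of both ``in particular'' clauses to the non-existence of angular faces is correct, and the rescaling-at-the-corner idea is the right geometric mechanism. Note, though, that the paper does not reprove this theorem: it cites Benzecri and Benoist and indicates the intended route, namely that the statement is a corollary of Benzecri's theorem that the $\PGL(d+1,\R)$-orbit of a divisible domain is \emph{closed} in the space of properly convex domains.

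The genuine gap is your final step. The splitting you produce lives on $C(\Omega_\infty)$ and is invariant, at best, under limits of the varying conjugates $h_n\gamma_n h_n^{-1}$; these are not elements of $\Gamma$, are not conjugates of a fixed subgroup of $\Gamma$ by a single element, and Vey's theorem (Theorem \ref{theorem: Vey irreducibility}), which concerns the action of $\Gamma$ itself on $\R^{d+1}$, says nothing about them --- so ``incompatible with the nontrivial invariant decomposition produced in the limit'' does not yield a contradiction, and the classical treatment does not argue this way. What closes the loop is the closed-orbit statement, which follows from Theorem \ref{theorem: Benzecri ccpt} together with divisibility: if $h_n\Omega\to\Omega_\infty$, pick $x_\infty\in\Omega_\infty$, note $h_n^{-1}x_\infty\in\Omega$ for large $n$, and choose $\gamma_n\in\Gamma$ moving $h_n^{-1}x_\infty$ into a fixed compact fundamental domain; then the pointed domains $(x_\infty,h_n\Omega)$ are images under $h_n\gamma_n^{-1}$ of points in a fixed compact subset of $X_0$, so properness of the $\PGL(d+1,\R)$-action forces $h_n\gamma_n^{-1}$ to subconverge to some $g$ with $g\Omega=\Omega_\infty$. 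Decomposability of $\Omega_\infty$ is then decomposability of $\Omega$ itself, contradicting the irreducibility hypothesis directly; no appeal to Vey is needed or available. You should also make the limit honest: exhibit an explicit one-parameter diagonal subgroup adapted to the corner and verify that the limit is a nontrivial join (for a facet, the interior of the join of the facet with a point), rather than asserting that you ``expect'' the split.
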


This result is really a corollary of a result of Benzecri that divisible convex domains have closed orbits under the action of $\PGL(d+1,\R)$ in the space of convex domains.

If $K$ is a face of $\Omega$, then its \emph{dual face} $K^*\subset \partial \Omega^*$ is the union of supporting hyperplanes containing $K$. This is a face of $\Omega^*$, any point in the interior of $K$ providing the necessary hyperplane in $(\RP^d)^*$.

\begin{remark}\label{remark: angular dim count}
Note that a face $K$ of $\Omega$ is angular if and only if $dim(K)+dim(K^*)\geq d-1$.
\end{remark}

The following lemma is simple but necessary.

\begin{lemma}\label{lemma: little lemma}
Let $J$ be a convex subset of $\partial\Omega$, and $x\in \textrm{int}(J)$. Then any hyperplane $\eta$ supporting $x$ also supports $J$, and if $J^*$ is the set of hyperplanes supporting $J$, then $J^*$  is a face of $\Omega^*$.
\end{lemma}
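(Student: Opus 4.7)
The plan is to handle the two assertions in sequence, using the first to deduce the second.

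For the first assertion, I would work in an affine chart containing $\overline{\Omega}$, and represent $\eta$ by a linear functional that vanishes on the hyperplane $\eta$ and is non-negative on $\overline\Omega$ (this is possible because $\eta$ does not meet the open set $\Omega$). Given any $y\in J$, since $x$ lies in the relative interior of the convex set $J$, I can extend the segment from $y$ through $x$ slightly past $x$ to reach some $y'\in J$, so that $x$ sits in the open segment from $y$ to $y'$. Both $y$ and $y'$ lie in $\overline\Omega$, giving $\eta(y),\eta(y')\geq 0$, while $\eta(x)=0$ is a strictly positive convex combination of $\eta(y)$ and $\eta(y')$. Two non-negative numbers that combine to zero must both vanish, so $\eta(y)=0$, meaning $y\in\eta$. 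Hence $J\subset\eta$.

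For the second assertion, the natural candidate for the hyperplane in $(\RP^d)^*$ that cuts $J^*$ out of $\partial\Omega^*$ is the hyperplane dual to the point $x$, namely $H_x=\{\eta\in(\RP^d)^*~\vert~\eta(x)=0\}$. Since $x\in\overline\Omega$ and every $\eta\in\Omega^*$ avoids $\overline\Omega$ by definition, we have $H_x\cap\Omega^*=\emptyset$, so $H_x$ is a valid ``cutting hyperplane'' in the sense of the definition of a face of $\Omega^*$. Then $H_x\cap\partial\Omega^*$ consists exactly of those $\eta$ that pass through $x$ and do not meet $\Omega$, which is the set of hyperplanes supporting $\Omega$ at $x$. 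By the first assertion this set is contained in $J^*$, and the reverse inclusion is trivial (any hyperplane supporting $J$ contains $x\in J$ and misses $\Omega$). So $J^*=H_x\cap\partial\Omega^*$ is a face of $\Omega^*$.

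The only step that needs any care is the convex-combination argument in the first assertion, specifically the use of the hypothesis that $x$ lies in the relative interior of $J$. Since $J\subset\partial\Omega$ may have any dimension, it is relative interior rather than interior in $\RP^d$ that is meant, and it is precisely this property that lets one extend a segment from any $y\in J$ through $x$ to a point $y'\in J$ on the opposite side. This is a standard fact about convex sets, and I anticipate no real obstacle in the proof — the lemma is essentially an unpacking of the definitions of support and duality, with the relative-interior extension argument as the only genuine content.
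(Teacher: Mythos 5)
Your proof is correct and takes essentially the same route as the paper's: the first assertion via one-sidedness of the supporting hyperplane combined with extending a segment through the relative-interior point $x$, and the second by identifying $J^*$ with the intersection of $\partial\Omega^*$ with the hyperplane in $(\RP^d)^*$ dual to $x$ (what the paper calls intersecting with the point $x$). Your write-up merely makes explicit the sign/convex-combination step that the paper leaves implicit.
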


\begin{proof}
Since $\Omega$ is properly convex, we may examine $J$ and $\eta$ in an affine chart containing $\Omega$ (so that $\eta$ is two-sided). As $\eta$ is a supporting hyperplane, in this chart $\Omega$ lies on one side of it. 

If $J$ is not contained in $\eta$, then $\eta$ divides $J$ into two components. However, points in $J$ are arbitrarily close to points in $\Omega$, so there are points of $\Omega$ on both sides of $\eta$, a contradiction. So $J$ is contained in $\eta$.

Lastly, observe that $J^*$ is obtained by taking the intersection of $\overline{\Omega}^*$ with $x\in \mathrm{int}(J)$, and that $x$ intersects $\partial \Omega^*$ but not $\Omega^*$. Hence, by definition, $J^*$ is a face of $\Omega^*$.
\end{proof}

Lemma \ref{lemma: little lemma} allows the following clear statement about the intersection behavior of faces in $\partial \Omega$. 

\begin{proposition}[4-chotomy]\label{prop: 4-chotomy}
Let $\Omega$ be a convex projective domain, and let $L$ and $K$ be a pair of faces in $\partial \Omega$. Then exactly one of the following holds.
\begin{enumerate}
\item $L=K$
\item $L\cap K=\emptyset$
\item $L\subset \partial K$ or $K \subset \partial L$
\item $\mathrm{int}(L)\cap K=\mathrm{int}(K)\cap L=\emptyset$ and $L \cap K$ is a face.
\end{enumerate}
\end{proposition}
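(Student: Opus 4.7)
The plan is to reduce everything to Lemma \ref{lemma: little lemma} after first producing a single supporting hyperplane whose intersection with $\partial\Omega$ is exactly $L\cap K$. If $L\cap K=\emptyset$ we are already in case (2), so assume $L\cap K\neq\emptyset$ throughout.

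Working in an affine chart containing $\overline{\Omega}$, I would pick affine functionals $\ell_L,\ell_K$ with $\ell_L\geq 0$ on $\overline{\Omega}$ and $\{\ell_L=0\}\cap\overline{\Omega}=L$, and similarly for $\ell_K$. Then $\ell_L+\ell_K\geq 0$ on $\overline{\Omega}$, is strictly positive on $\Omega$ (since each summand is), and vanishes precisely on $L\cap K$. Hence $(\ell_L+\ell_K)^{-1}(0)$ is a hyperplane disjoint from $\Omega$ whose intersection with $\partial\Omega$ is $L\cap K$, so $F:=L\cap K$ is a face of $\Omega$.

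Once $F$ is known to be a face, the remainder is a case analysis on the inclusion pattern of $F$ inside $L$ and $K$. If $F=L=K$ we are in case (1). If $F=L\subsetneq K$, then any hypothetical $x\in L\cap\mathrm{int}(K)$ would be supported by the hyperplane cutting out $L$, and by Lemma \ref{lemma: little lemma} (with $J=K$) this hyperplane would then support all of $K$, forcing $K\subset L$ --- a contradiction. Thus $L\subset\partial K$, giving case (3); the case $F=K\subsetneq L$ is symmetric. Finally, if $F$ is proper in both, any $x\in\mathrm{int}(L)\cap K$ lies in $F$, and the hyperplane cutting out $F$ supports $x$; Lemma \ref{lemma: little lemma} (now with $J=L$) then forces $L\subset F$, contradicting $F\subsetneq L$. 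Hence $\mathrm{int}(L)\cap K=\emptyset$ and symmetrically $\mathrm{int}(K)\cap L=\emptyset$, which is case (4).

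The only step with any real content is constructing the functional $\ell_L+\ell_K$ that witnesses $L\cap K$ as a face; the nonnegativity of the individual $\ell_L,\ell_K$ on $\overline{\Omega}$ is exactly what prevents the common zero set from being larger than $L\cap K$. Everything else is mechanical bookkeeping with Lemma \ref{lemma: little lemma}. Mutual exclusivity of the four cases is automatic from the definitions of the inclusion patterns, so one only needs to verify that at least one always holds.
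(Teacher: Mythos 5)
Your proof is correct and takes essentially the same route as the paper: your functional $\ell_L+\ell_K$ is (up to scale) exactly the convex combination $\mu$ of the two supporting hyperplanes in the paper's argument, with your nonnegativity observation replacing the paper's chamber/sign analysis to show its zero set meets $\overline{\Omega}$ precisely in $L\cap K$, and the inclusion/interior cases are dispatched with Lemma \ref{lemma: little lemma} just as the paper does. The only cosmetic difference is that you establish the face property of $L\cap K$ up front for every nonempty intersection and then sort the inclusion patterns, whereas the paper sorts cases first and only builds the combination in the residual case; either order is fine (modulo the trivial check, worth one line, that $\ell_L+\ell_K$ is non-constant so its zero set really is a hyperplane).
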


\begin{proof}

First observe that Lemma \ref{lemma: little lemma} shows that if $L \subset K$ and $L$ includes an interior point of $K$ then $L=K$. So if $L\subset K$, either $L=K$ or $L\subset \partial K$.

If $L\subset K$ or $K\subset L$, then either (1) or (3) holds (exclusively). If neither face is a subset of the other, then they are either disjoint, which is (3), or there is a point $z\in K\cap L$, a point $k\in K\setminus L$ and a point $l\in L\setminus K$. If this is the case, then there is some hyperplane $\eta_K$ supporting $K$ but not $L$, and some hyperplane $\eta_L$ supporting $L$ but not $K$. Hence their interiors are disjoint by Lemma \ref{lemma: little lemma}. To show that this is case (4), we must prove that $L\cap K$ is a face.

Choose a fixed affine chart properly containing $\Omega$ and orient hyperplanes in this chart which do not intersect $\Omega$ so that their positive side contains $\Omega$. Then $\eta_L$ and $\eta_K$ divide this chart into four non-empty subsets with disjoint interiors, defined by the signs of $\eta_L$ and $\eta_K$. Call the closure of these four subsets \textit{chambers}. Suppose $\mu$ is a (non-trivial) convex combination of $\eta_K$ and $\eta_L$, so that $\mu$ is disjoint from the chamber containing $\Omega$ and its opposite chamber, except for on $\eta_K\cap\eta_L$. 

Let $J=\mu\cap\overline{\Omega}$. Certainly, $L\cap K\subset J$ by the definition of $\mu$, $L$, and $K$. Suppose that $z\in J$. If $z$ were not in $\eta_L\cap\eta_K$, then $z$ would be in one of the two chambers where $\eta_L$ and $\eta_K$ have opposite sign. The point $z$ cannot be in the interior of such a chamber though, as then interior points of $\Omega$ would be on the interior of such a chamber, and one of $\eta_L$ or $\eta_K$ would evaluate negatively on points in $\Omega$, a contradiction. Thus $J\subset \eta_K\cap\eta_L$, and $J=L\cap K$. Thus in the case where $\mathrm{int}(L)\cap K=\mathrm{int}(K)\cap L=\emptyset$ but $L\cap K\neq \emptyset$, we have that $L\cap K$ is a face. This is case (4).

Certainly the four arrangements are exclusive, so the proposition is proved.

\end{proof}

We also note here that the dual takes the set of faces of $\Omega$ to the set of faces of $\Omega^*$, and reverses inclusion.  

\begin{lemma}\label{lemma: dual functor on facets}
Let $K$ and $L$ be facets of $\Omega$, so that $K \subset \partial L$. Then  $L^*\subset \partial K^*$.
\end{lemma}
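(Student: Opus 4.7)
The plan is to reduce the inclusion $L^* \subset \partial K^*$ to an application of the 4-chotomy (Proposition \ref{prop: 4-chotomy}) to the pair of faces $L^*, K^*$ in $\Omega^*$, with the crux of the argument being a double-dual identity $F^{**} = F$ used to rule out equality.

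First I would observe that the hypothesis $K \subseteq L$ immediately yields $L^* \subseteq K^*$: any supporting hyperplane of $\Omega$ which contains $L$ automatically contains $K$. Running through the four cases of the 4-chotomy applied to the faces $L^*$ and $K^*$ of $\Omega^*$, the containment $L^* \subseteq K^*$ rules out cases (2) and (4), and the alternative ``$K^* \subset \partial L^*$'' branch of case (3) would combine with $L^* \subseteq K^*$ to force $L^* = K^*$. So the only live alternatives are $L^* = K^*$ or the desired conclusion $L^* \subset \partial K^*$.

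It therefore remains to rule out $L^* = K^*$, and for this I would establish the double-dual identity $F^{**} = F$ for an arbitrary face $F$ of $\Omega$. Granted this, $L^* = K^*$ would yield $L = L^{**} = K^{**} = K$, contradicting $K \subset \partial L$. To prove the identity, note that because $F$ is a face, there is some supporting hyperplane $\eta_F \in F^*$ with $F = \overline{\Omega} \cap \ker(\eta_F)$. Unpacking $F^{**}$ under the canonical identification $\Omega^{**} \cong \Omega$, one sees that $F^{**}$ is precisely $\partial \Omega \cap \bigcap_{\eta \in F^*} \ker(\eta)$; since this intersection sits inside $\ker(\eta_F) \cap \overline{\Omega} = F$ while trivially containing $F$, equality holds.

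The main obstacle is really just the bookkeeping required to unpack $F^{**}$ under the double-dual identification and recognize it as the cosupport of $F$ intersected with $\partial \Omega$; this is routine but one has to be scrupulous about what a ``supporting hyperplane of $\Omega^*$ containing $F^*$'' means under $\Omega \cong \Omega^{**}$. Once that is in place, the lemma drops out with no further input.
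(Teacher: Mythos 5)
Your argument is correct, but it takes a different route from the paper's. The paper's proof is a one-liner: $L^{*}\subseteq K^{*}$ is immediate, and Lemma \ref{lemma: little lemma} is then invoked (in the dual) to see that $L^{*}$ cannot meet $\mathrm{int}(K^{*})$ --- if the hyperplane of $(\RP^d)^{*}$ cutting out $L^{*}$, namely the one determined by a point $x\in\mathrm{int}(L)$, supported an interior point of $K^{*}$, it would contain all of $K^{*}$, forcing every supporting hyperplane through $K$ to pass through $x$, which is impossible since the hyperplane exhibiting $K$ as a face cannot contain $\mathrm{int}(L)$. You instead reduce, via Proposition \ref{prop: 4-chotomy} applied to the faces $L^{*},K^{*}$ of $\Omega^{*}$, to excluding $L^{*}=K^{*}$, and exclude it by proving the biduality $F^{**}=F$ for every (exposed) face $F$ of $\Omega$. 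That identity is correct with the paper's definition of face --- your computation $F^{**}=\partial\Omega\cap\bigcap_{[\eta]\in F^{*}}\ker(\eta)\subseteq\ker(\eta_F)\cap\overline{\Omega}=F$ goes through, though the step identifying supporting hyperplanes of $\Omega^{*}$ with points of $\overline{\Omega}$ quietly uses the standard separation fact underlying $\Omega^{**}=\Omega$, which the paper never states in this generality and which you should spell out if writing this up. What your route buys is a reusable general statement: the paper itself later needs $(K^{*})^{*}=K$ for faces of flats and proves it there by a dimension count special to simplices, whereas your biduality argument gives it for all exposed faces at once. What it costs is length and an extra layer of dual bookkeeping compared with the paper's direct appeal to Lemma \ref{lemma: little lemma}.
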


\begin{proof}
It is clear that at least $L^*\subset K^*$, and Lemma \ref{lemma: little lemma} demonstrates that $L^*$ must be in the boundary of $K^*$.
\end{proof}

\subsection{Euclidean isometries and virtual properties}

Many families of manifolds and groups have key properties which always hold on finite-index subgroups or finite-sheeted covers. Such a property of a manifold or group is said to be true \textit{virtually}.

\begin{definition}[Virtual properties]
Let $P$ be a property of a group (topological space). A group (topological space) which has a finite-index subgroup (finite-sheeted cover) with property $P$ is said to \textit{virtually} have the property $P$.
\end{definition}

Here are a few simple examples:
\begin{enumerate}
    \item Every finite group is virtually trivial.
    \item The non-trivial semi-direct product $\Z\rtimes \Z$ is virtually the direct product $\Z\times \Z$.
    \item The Klein bottle is virtually the $2$-dimensional torus.
    \item Every manifold is virtually orientable.
    \item The fundamental group of any manifold $M$ virtually acts trivially on the set of orientations of the universal cover $\tilde{M}$.
\end{enumerate}

The purpose of these examples is to illustrate that virtual properties of groups and spaces are not disjoint concepts, by way of subgroup correspondence. In particular, note the relationship between the second and third items, as well as the fourth and fifth items on the list.

The study of discrete subgroups of the isometries of Euclidean space is classical. We require a few of these facts, as flats share many algebraic and geometric properties with Euclidean space (see Section \ref{section: technical flats stuff}).

The following result is a combination of a theorem of Bieberbach and its extension to discrete subgroups of $\Isom(\E^n)$ which are not cocompact.

\begin{theorem}[Virtual classification of discrete $\Isom(\E^n)$ subgroups]\cite{Cheeger-Gromoll}\label{theorem: Bieberbach/soul}
Let $H<\Isom(\E^d)$ be discrete. Then $H$ is virtually free abelian of rank $k\leq d$. The action of $H$ on $\E^n$ is cocompact if and only if $k=d$.
\end{theorem}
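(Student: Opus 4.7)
The plan is to split $H$ into its translational and rotational parts, control each separately, and then deduce the cocompactness criterion. Let $\R^d \triangleleft \Isom(\E^d)$ denote the normal translation subgroup and set $T := H \cap \R^d$. Since $\R^d$ is closed in $\Isom(\E^d)$ and $H$ is discrete, $T$ is a discrete subgroup of the vector group $\R^d$, and (being a subgroup of the abelian translation group) is automatically abelian. By the standard structure theorem for discrete subgroups of a finite-dimensional real vector space, $T \cong \Z^k$ for some $0 \leq k \leq d$, spanning a $k$-dimensional linear subspace $V_0 \subseteq \R^d$.

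The next step is to exhibit an $H$-invariant affine subspace $V \subseteq \E^d$ parallel to $V_0$ on which $H$ acts cocompactly, and then apply the classical (cocompact) Bieberbach theorem on $V$. The cleanest route to $V$ is to invoke the Cheeger--Gromoll soul theorem applied to the flat (orbi)manifold $H \backslash \E^d$: after passing to a torsion-free finite-index subgroup via Selberg's lemma, it is a complete flat manifold, in particular non-negatively curved, so it deformation retracts onto a compact totally geodesic submanifold, the \emph{soul}, whose lift to the universal cover $\E^d$ is the desired $V$, with $\dim V = k$ by construction. The classical Bieberbach theorem applied to the cocompact action of $H$ on $V$ then yields that the pure translation subgroup has finite index in $H$; the key technical input is the commutator estimate $\|[A_1,A_2]-I\|=O(\|A_1-I\|\,\|A_2-I\|)$ in $O(k)$, which combined with the cocompactness-provided diameter bound on translational parts and the discreteness of $H$ forces only finitely many rotational parts to appear.

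For the cocompactness criterion, if $k=d$ then $T \cong \Z^d$ is already a full-rank lattice in the translation group $\R^d$ and hence acts cocompactly on $\E^d$, and so does its finite overgroup $H$. Conversely, if $H$ acts cocompactly on $\E^d$, the soul produced above must coincide with the entire quotient, so $V = \E^d$ and $k = d$.

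I expect the main obstacle to be the identification of the invariant cocompact flat $V$: the cocompact Bieberbach theorem does not apply directly to a general discrete $H$, and locating an $H$-invariant affine subspace on which $H$ does act cocompactly is the substantive content of the generalization to the non-cocompact setting. The Cheeger--Gromoll soul theorem makes this geometrically transparent (and indeed the author cites \cite{Cheeger-Gromoll} for precisely this step); a purely algebraic proof would require a delicate bootstrap between finiteness of the rotational image and the construction of $V$.
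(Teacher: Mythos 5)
The paper does not actually prove this statement; it is quoted from \cite{Cheeger-Gromoll}, so there is no internal argument to compare against, and your proposal must stand on its own. It has a genuine gap at its very first step: you set $k$ equal to the rank of the pure translation subgroup $T = H \cap \R^d$ and then assert that the invariant cocompact flat $V$ (the lift of the soul) is parallel to $V_0 = \mathrm{span}(T)$ with $\dim V = k$ ``by construction.'' This is false in general. Take $d = 3$ and let $g$ be a screw motion: rotation about the $z$-axis by an angle that is an irrational multiple of $\pi$, composed with unit translation along that axis. Then $H = \langle g \rangle$ is discrete and infinite cyclic, so the correct value of $k$ in the theorem is $1$ (the soul of $H \backslash \E^3$ is a circle whose lift is the $z$-axis), yet no power of $g$ is a translation, so $T$ is trivial, $V_0 = \{0\}$, and there is no $H$-invariant affine subspace parallel to $V_0$ on which $H$ acts cocompactly (an invariant point would put $H$ inside a compact point stabilizer, forcing $H$ finite). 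The rank appearing in the theorem is the dimension of a minimal $H$-invariant affine subspace (equivalently of the soul), which can strictly exceed the rank of $H \cap \R^d$ even after passing to finite-index subgroups: the finite-index free abelian subgroup consists of isometries that act as translations on $V$ but may carry nontrivial rotational parts transverse to $V$. With your definition of $k$ the implication ``$k=d$ implies cocompact'' is trivially fine, but the central claim that $H$ is virtually free abelian of rank $k$ is wrong as argued.

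Two further points. Your appeal to Selberg's lemma presupposes that $H$ is finitely generated, which for discrete subgroups of $\Isom(\E^d)$ is itself part of the Bieberbach-type structure theory being proved, so as stated this is circular; one should instead work with the flat orbifold directly or first split off the finite pointwise stabilizer of the invariant subspace. The viable repair of your outline is: let $V$ be a minimal $H$-invariant affine subspace (the lift of the soul), show that $H$ acts on $V$ cocompactly and that the restriction homomorphism $H \rightarrow \Isom(V)$ has finite kernel, apply the classical cocompact Bieberbach theorem to the image, and \emph{define} $k := \dim V$; the subgroup $T$ then plays no role, and indeed cannot.
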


\subsection{Cusped convex projective manifolds}\label{section: cusps}

In Theorem \ref{theorem: main theorem}, we show that any compact convex projective manifold decomposes into a collection of cusped convex projective manifolds (if the set of flats in the domain covering it is non-empty). Cusps in the convex projective setting are more diverse than in the hyperbolic setting. It is  not immediately clear what ought to be called a `cusp' in this context, but there are a number of good reasons to take the following definition.

\begin{definition}\cite{Cooper-Long-Tillmann}
A \emph{cusp} in convex projective geometry is a manifold $C$ with a convex projective structure which is homeomorphic to $\partial C\times \R_{\geq 0}$, with $\partial C$ compact and $C$ strictly convex, and which has virtually abelian holonomy .
\end{definition}

In $d$-dimensional hyperbolic geometry, all cusps are quotients of the hyperbolic horoball. In projective geometry, there are $(d+1)$ \emph{types} of horoball with associated matrix groups stabilizing them. A more complete discussion is presented by Ballas-Cooper-Leitner, along with a parametrization of these model spaces \cite{Ballas-Cooper-Leitner}. In particular, the model horoballs are given a type, which is an integer between $0$ and $d$, where the type $0$ horoball is that of hyperbolic geometry. 

While there is a lot to be said about all types of convex projective cusps, only type $d$ cusps appear in this paper. For this reason, we will take the following corollary of the main result from \cite{Ballas-Cooper-Leitner} as our definition.

\begin{corollary}\label{corollary: diagonal hol means type n}
If $C$ is a convex projective cusp, then the  holonomy of $C$ is virtually diagonalizable if and only if $C$ is a type $d$ cusp.
\end{corollary}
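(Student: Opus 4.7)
The plan is to extract this statement directly from the Ballas--Cooper--Leitner classification of convex projective cusps rather than to argue from first principles. Their main theorem parametrizes model horoballs in $\RP^d$ by a type $t\in\{0,1,\dots,d\}$, together with an explicit normal form for the stabilizer subgroup of $\PGL(d+1,\R)$ preserving each model. So the approach is to read off the Jordan structure of the stabilizer in each type and observe that only $t=d$ produces a (virtually) real-diagonalizable group.

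First I would recall the setup: a convex projective cusp $C$ lifts to a neighborhood of a boundary point $p\in\partial\widetilde{C}$ in its universal cover, and the holonomy $\Gamma_C$ is a discrete subgroup of $\PGL(d+1,\R)$ fixing $p$ and preserving a model horoball. By hypothesis $\Gamma_C$ is virtually abelian, and by Theorem \ref{theorem: Bieberbach/soul} (applied to the induced action on the cross-section) it is virtually isomorphic to $\Z^{d-1}$. The Ballas--Cooper--Leitner classification expresses such a $\Gamma_C$, up to finite index and conjugation, as sitting inside a prescribed solvable group whose semisimple part acts on a subspace of dimension determined by $t$ and whose unipotent part acts on a complementary subspace of dimension $d-t$. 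In particular the type records the size of the non-trivial unipotent block appearing in the Jordan decomposition of a generic element.

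Next I would check the two implications separately. For the easy direction, the type $d$ model group in the Ballas--Cooper--Leitner list is (virtually) contained in the positive diagonal subgroup of $\PGL(d+1,\R)$, so its elements are simultaneously diagonalizable over $\R$; hence a type $d$ cusp has virtually diagonalizable holonomy. For the converse, suppose $\Gamma_C$ is virtually diagonalizable over $\R$. After passing to a finite-index subgroup that simultaneously diagonalizes, every element of $\Gamma_C$ is semisimple, so the unipotent factor in the Ballas--Cooper--Leitner normal form must be trivial. The only type in their list compatible with a trivial unipotent factor together with the required rank $d-1$ abelian action on the cross-section is $t=d$, so $C$ is a type $d$ cusp.

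The main obstacle, and really the only content beyond citation, is verifying that the unipotent factor of the normal form vanishes precisely in type $d$. This is a bookkeeping check with the explicit matrices written down in \cite{Ballas-Cooper-Leitner}, together with a small observation that no rotation blocks can arise either: a non-trivial real rotation block would give an element with no eigenvector at $p$, contradicting the fact that each holonomy element fixes the cusp point with a distinguished real eigendirection. Once this is in hand, the corollary follows with no further convex-projective input.
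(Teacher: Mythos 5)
The paper gives no independent argument for this statement: it simply takes it as an immediate corollary of the Ballas--Cooper--Leitner classification (and adopts it as the working definition of a type $d$ cusp), which is exactly the citation-level route you take, so your proposal matches the paper's approach. One small caveat: your justification for excluding rotation blocks is not right as stated (an element can fix the cusp point, hence have a real eigendirection there, and still carry a rotation block on a complementary invariant subspace), but this is harmless because the absence of such blocks in the relevant normal forms is read off directly from the explicit matrices in \cite{Ballas-Cooper-Leitner}.
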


The reader is invited to compare the following definition to that of cusped hyperbolic manifolds.

\begin{definition}\cite{Cooper-Long-Tillmann}\label{definition: cusped conv proj manifold}
Let $\Omega$ be a properly convex projective domain, and let $M$ be the quotient of $\Omega$ by some discrete subgroup of $\Aut(\Omega)$. We will say that $M$ is a \emph{cusped convex projective manifold} when $M$ is the union of a compact manifold with boundary (the \emph{core} of $M$) and $\{C_i\}_{i=1}^k$ a finite collection of convex projective cusps.
\end{definition}

This definition does not allow for cusps that are not full-rank. In particular, one might complain that hyperbolic manifolds which are geometrically finite but have some cusp ends which are not full rank do not fit this definition, and so are not cusped convex projective manifolds. Nevertheless, this definition suits our immediate needs, so we will adopt it.

\section{Intrinsic structure of flats, duality, and consequences}\label{section: technical flats stuff}

\subsection{$\Delta^d$ is bi-Lipschitz to $\E^d$}\label{subsection: flats are almost Euclidean}

Let $\{e_1,\dots, e_{d+1}\}$ be a collection of $(d+1)$ points in $\RP^d$ which are in generic position. Let us define $\Delta^d$ the \emph{open simplex} in dimension $d$ to be the interior of the convex hull of $\{e_1, \dots, e_{d+1}\}$ (in an affine chart disjoint from $\{e_1, \dots, e_{d+1}\}$). Up to the action of $\PGL(d+1,\R)$, there is only one such domain.

The set $\Delta^d$ is a (reducible) properly convex projective domain. The group $\Aut(\Delta^d)$ is isomorphic to the semi-direct product $\R^d \rtimes S_{d+1}$ where $S_{d+1}$ is the permutation group on $(d+1)$ symbols. 

If $\{e_1,\dots, e_{d+1}\}$ is taken as a basis, then the $\R^d$ factor acts by diagonal matrices with determinant one, and the $S_d$ factor acts by permutation matrices. In this basis, each point in $\Delta^d$ can be written uniquely as a vector $v=(\exp{v_1},\dots,\exp{v_{d+1}})$ of positive real numbers whose product is $1$. Let $\phi$ be the map from $\Delta^d$ to $\E^{d+1}$ defined by
$$\phi(v)=(v_1,\dots,v_{d+1}).$$

Here $\E^{d+1}$ is taken as the vector space $\R^{d+1}$ equipped with the usual positive definite bilinear form. The map $\phi$ is injective, and its image is a totally geodesic hyperplane in $\E^{d+1}$ which is isomorphic to (and which we will refer to as) $\E^d$.

The map $\phi$ is not an isometry to its image, but it is a bi-Lipschitz equivalence.

\begin{lemma}\label{lemma: bi-Lipschitz simplex Euclidean}
The map $\phi:\Delta^d\rightarrow \E^d$ is a bi-Lipschitz equivalence.
\end{lemma}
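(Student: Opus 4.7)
\emph{Plan.} My strategy is to push the Hilbert metric on $\Delta^d$ through $\phi$ to a translation-invariant metric on $\E^d$, identify it as the distance coming from a norm, and then invoke the equivalence of norms on a finite-dimensional vector space. The key structural input is that the $\R^d$ subgroup of $\Aut(\Delta^d)$ corresponds under $\phi$ exactly to translations of the hyperplane $\E^d=\{u\in\R^{d+1}:\sum u_i=0\}$: the element with diagonal entries $(e^{a_1},\dots,e^{a_{d+1}})$, $\sum a_i=0$, sends $(\exp v_1,\dots,\exp v_{d+1})$ to $(\exp(v_1+a_1),\dots,\exp(v_{d+1}+a_{d+1}))$, so under $\phi$ it is translation by $(a_1,\dots,a_{d+1})$. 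Since this group acts simply transitively by Hilbert isometries, the pushforward of $d_{\Delta^d}$ to $\E^d$ is translation-invariant, and it suffices to identify $N(u):=d_{\Delta^d}(\phi^{-1}(0),\phi^{-1}(u))$.

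\emph{Step 1.} I would first derive the explicit formula for the Hilbert metric on $\Delta^d$. For $p,q\in\Delta^d$ in barycentric coordinates, the projective line through $p$ and $q$ exits $\Delta^d$ through the two coordinate facets on which the ratios $p_i/q_i$ are respectively maximized and minimized. A direct cross-ratio computation then yields
\begin{equation*}
d_{\Delta^d}(p,q)=\tfrac{1}{2}\log\frac{\max_i(p_i/q_i)}{\min_i(p_i/q_i)}.
\end{equation*}
Substituting $p_i\propto e^{v_i}$ and $q_i\propto e^{w_i}$ with $\sum v_i=\sum w_i=0$, the normalization constants cancel from each ratio, and one obtains
\begin{equation*}
d_{\Delta^d}(\phi^{-1}(v),\phi^{-1}(w))=\tfrac{1}{2}\bigl(\max_i(v_i-w_i)-\min_i(v_i-w_i)\bigr).
\end{equation*}

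\emph{Step 2.} Define $N(u):=\max_i u_i-\min_i u_i$ on $\R^{d+1}$. Homogeneity $N(\lambda u)=|\lambda|N(u)$ and subadditivity $N(u+v)\leq N(u)+N(v)$ follow directly from the formula. On the hyperplane $\E^d=\{\sum u_i=0\}$, any nonzero $u$ has both a strictly positive and a strictly negative coordinate, so $N(u)>0$; hence $N$ is a genuine norm on the finite-dimensional space $\E^d$, and Step 1 gives $d_{\Delta^d}(\phi^{-1}(v),\phi^{-1}(w))=\tfrac{1}{2}N(v-w)$. Because all norms on a finite-dimensional vector space are equivalent, there exist $0<c\leq C$ with $c\,\|v-w\|_2\leq \tfrac{1}{2}N(v-w)\leq C\,\|v-w\|_2$ for all $v,w\in\E^d$, which is exactly the bi-Lipschitz property of $\phi$.

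\emph{Main obstacle.} The only step requiring genuine work is the computation in Step 1: identifying which faces of $\Delta^d$ a generic projective line exits through and extracting a closed form from the cross-ratio. The transitivity observation and Step 2 are essentially bookkeeping once the explicit formula is established; all the geometric content is concentrated in the assertion that the Hilbert metric on the simplex is (up to a factor of $2$) the variation norm in logarithmic coordinates.
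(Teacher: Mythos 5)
Your proof is correct, but it takes a genuinely different route from the paper. The paper argues infinitesimally: it notes that $\phi$ conjugates $\Aut(\Delta^d)$ into $\Isom(\E^d)$, compares the Finsler unit ball of the Hilbert metric with the Euclidean unit ball at the single point $p=\phi^{-1}(0)$ (two compact convex bodies containing the origin in their interior are nested up to a constant $k$), uses homogeneity to get the same $k$ at every point, and then integrates along paths to pass from the infinitesimal comparison to a bi-Lipschitz bound on distances. You instead compute the Hilbert metric of the simplex in closed form, $d_{\Delta^d}(p,q)=\tfrac12\log\bigl(\max_i(p_i/q_i)/\min_i(p_i/q_i)\bigr)$, so that in the logarithmic coordinates of $\phi$ the pushforward distance is exactly $\tfrac12\bigl(\max_i(v_i-w_i)-\min_i(v_i-w_i)\bigr)$, i.e.\ half the variation seminorm, which is a genuine norm on the hyperplane $\sum u_i=0$; finite-dimensional norm equivalence then finishes. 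Your route requires the one concrete computation (identifying the exit points of the line through $p,q$ and evaluating the cross-ratio --- note only that when the max or min ratio is attained by several indices the exit point lies in a lower-dimensional face, which does not affect the formula, and that the translation-invariance remark in your plan becomes redundant once the two-point formula is in hand), but it buys a strictly stronger conclusion: $\phi$ is an isometry onto $\E^d$ equipped with the norm $\tfrac12 N$, with an explicit bi-Lipschitz constant available. The paper's softer argument avoids all computation and would apply verbatim to compare any two homogeneous Finsler structures, but only yields bi-Lipschitz equivalence with an unspecified constant.
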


Recall that there is a bijection between norms on a finite dimensional vector space $V$ and the set 

$$\mathscr{K}=\{K\subset V ~\vert~ K \textrm{ is compact and convex with non-empty interior}, K=-K\}.$$ 

A norm $||\cdot||_K$ is got from $K\in\mathscr{K}$ by taking vectors on $\partial K$ to have length one, and using scaling to define lengths of other vectors. Given a norm $||\cdot||$, the set $K_{||\cdot||}$ can be recovered by taking $K$ to be the unit ball of $||\cdot||$. Furthermore, when $V$ is real, the norm $||\cdot||$ is induced by an inner-product if and only if $K_{||\cdot||}$ is an ellipsoid.

\begin{proof}
The map $\phi$ induces an injective homomorphism of $\Aut(\Delta^d)$ into $\Isom(\E^d)$ by taking $\gamma\mapsto \phi\circ\gamma\circ(\phi^{-1})$ for $\gamma \in \Aut(\Delta^n)$ (by inspection). Let $p\in \Delta^d$ be the unique point so that $\phi(p)=0\in \E^d$. The map $\phi$ induces a map $d\phi:T\Delta^d\rightarrow T\E^d$ on the tangent spaces, and a vector space isomorphism from $T_p\Delta^d\rightarrow T_{0}\E^d$. Let $K_p$ be the unit ball of the norm on $T_p \Delta^d$, and let $B_0$ be the unit ball for the Euclidean metric on $T_0\E^d$. Observe that there is some positive real number $k$ such that $k^{-1}B_0\subset d\phi(K_p)\subset k B_0$ as both subsets are convex and compact with non-empty interior, and contain the origin. Further note that the homogeneity of the two spaces gives that the same number $k$ will have this property at every point. The linearity of the integral in the standard construction of a metric from a norm on the tangent space then gives that $\phi$ is a bi-Lipschitz map.
\end{proof}


Note that the translation length of elements of $\Aut(\Delta^n)$ behaves much like that of Euclidean isometries. For $\gamma \in \Aut(\Delta^n)$, let $|\gamma| = \inf_{x\in\Delta^n}(d_{\Delta^n}(x,\gamma.x))$. Observe that $|\gamma|=0$ if and only if $\gamma$ has a fixed point. Otherwise, we will refer to $\gamma$ as a \emph{translation}.

\subsection{Codimension-1 simplices (flats)}

The substructures we are interested in are \emph{properly embedded codimension-$1$ simplices}. For the remainder of the paper, we will use the following definition. Recall that $\Omega$ is always assumed to be divisible.

\begin{definition}
A \emph{flat} $F$ in a convex projective domain $\Omega\subset \RP^d$ is a subset of $\Omega$ which is projectively equivalent to $\Delta^{k}$ and which has the property that $\partial F \subset \partial \Omega$. Let $\mathscr{F}$ be the collection of all codimension-$1$ flats in $\Omega$, and $\mathscr{F}^*$ the collection of all codimension-$1$ flats in $\Omega^*$. 
\end{definition}

The discussion of the metric on flats in Subsection \ref{subsection: flats are almost Euclidean} clarifies this nomenclature. We borrow the terminology from the study of Riemannian manifolds of non-positive curvature.

\begin{remark}
For the rest of the paper we will only deal with codimension-$1$ flats, and so we will use the term \emph{flat}, the dimension being implicit.
\end{remark}

Observe that the set $\mathscr{F}$ is closed in the geometric topology. That is, any sequence of flats whose geometric limit contains a point in $\Omega$ converge to a flat. To see this, observe the following: suppose that a sequence of flats $F_i$ are contained in hyperplanes $\eta_i$ which are converging to $\eta$ with a point $x\in\Omega\cap\eta$. Then the flats $F_i$ are defined by their vertex sets, $\{z_1,\dots,z_{n}\}_i$ which up to reordering converge to $\{z_1, \dots, z_{n}\} \subset \eta$. All that is left to note is that $\{z_1 \dots z_{n}\}$ must be in general position since their convex hull contains a point $x\in\Omega$. 

\subsection{Duality for flats}

Let $F$ be an element of $\mathscr{F}$. Note that from Theorem \ref{theorem: no angular faces} (specifically that $(d-2)$-faces of $\Omega$ have unique supporting hyperplanes), the supporting hyperplanes of the faces of $F$ intersect in a unique point in $\RP^d \setminus \overline{\Omega}$. This assures that the following is well-defined. 

\begin{definition}\label{def: pseudo-dual}
Let $F\in\mathscr{F}$ be a flat. The \textit{psuedo-dual} of $F$ is the unique point contained in all $(d-2)$-faces of $F$, and is noted $\hat{F}$.
\end{definition}

This point is analogous to the dual point to a hyperplane in the projective model of hyperbolic space, but it is not defined for every hyperplane in $\Omega$. The intention of the term `pseudo-dual' is to avoid confusion with the already existing dual structures.

A consequence of Theorem \ref{theorem: no angular faces} is that flats have the following duality properties. In short, every $k$-face $K$ of a flat $F\in\mathscr{F}$ is also a face of $\Omega$, and has a dual face $K^*$ which is an $(d-2-k)$-simplex and a face of $\Omega^*$. These dual faces make up the boundary of a flat $F^*\in\mathscr{F}^*$.

\begin{lemma}
If $F\in\mathscr{F}$ and $K$ is a face of $F$ (considering $F$ as a convex domain itself), then $K$ is a face of $\Omega$.
\end{lemma}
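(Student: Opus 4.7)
The plan is to exhibit, for each face $K$ of the simplex $F$, a hyperplane $\eta\subset\RP^d$ with $\eta\cap\Omega=\emptyset$ and $\eta\cap\overline{\Omega}=K$. I begin with the preparatory identity $\Pi_F\cap\overline{\Omega}=\overline{F}$, where $\Pi_F$ is the hyperplane containing $F$. A short convexity argument shows $\Pi_F\cap\Omega=F$: if $z\in(\Pi_F\cap\Omega)\setminus\overline{F}$, then for any $p\in F$ the segment from $z$ to $p$ lies in $\Omega\cap\Pi_F$ yet must cross $\partial F\subset\partial\Omega$, which is impossible. Passing to closures gives $\Pi_F\cap\overline{\Omega}=\overline{F}$, and intersecting with the affine span $\mathrm{aff}(K)\subset\Pi_F$ gives $\mathrm{aff}(K)\cap\overline{\Omega}=K$ since $K$ is cut out from $\overline{F}$ by its own affine span.

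I then dispatch the case $\dim K=d-2$. Choose $x$ in the relative interior of $K$. Since $x\in\partial\Omega$ there is a supporting hyperplane $\eta_0$ of $\Omega$ at $x$, and Lemma \ref{lemma: little lemma} forces $\eta_0\supseteq K$. The face $L:=\eta_0\cap\overline{\Omega}$ of $\Omega$ contains $K$; Theorem \ref{theorem: no angular faces} forbids facets, so $\dim L\leq d-2$, and combined with $K\subseteq L$ this means $L$ and $K$ share their affine span. The preparatory identity then collapses $L$ to $K$, so $\eta_0$ realizes $K$ as a face of $\Omega$.

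For $\dim K=k<d-2$, enumerate the $(d-2)$-faces of $F$ containing $K$: call them $K_1,\dots,K_m$ with $m=d-k-1$. By the previous paragraph each $K_i$ is a $(d-2)$-face of $\Omega$, and Theorem \ref{theorem: no angular faces} gives it a unique supporting hyperplane $\eta_i$. Pick lifts of the $\eta_i$ to linear functionals $\tilde\eta_i$ that are positive on the cone $C(\Omega)$, and set $\tilde\eta=\sum_{i=1}^m t_i\tilde\eta_i$ with every $t_i>0$; then $\tilde\eta$ is positive on $C(\Omega)$, hence nonzero, and the corresponding hyperplane $\eta$ is disjoint from $\Omega$. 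For $y\in C(\overline{\Omega})$ one has $\tilde\eta_i(y)\geq 0$, so $\tilde\eta(y)=0$ forces every $\tilde\eta_i(y)=0$; hence $\eta\cap\overline{\Omega}=\bigcap_i(\eta_i\cap\overline{\Omega})=\bigcap_i K_i$. Each $K_i$ sits inside $\overline{F}$, so this intersection lives in $\overline{F}$, where elementary simplex combinatorics give $\bigcap_i K_i=K$.

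The main obstacle is the general-$k$ step: the supporting hyperplane handed to us by Lemma \ref{lemma: little lemma} at an interior point of $K$ will typically cut out a face of $\Omega$ strictly larger than $K$, so one has to manufacture the correct hyperplane from the hyperplanes of the ambient $(d-2)$-faces. Positivity of the dual representatives on $C(\Omega)$ is the key device, since it is precisely what collapses the zero set of the convex combination on $\overline{\Omega}$ down to the common zeros of the constituent hyperplanes.
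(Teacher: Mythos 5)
Your argument is correct, but it takes a genuinely different route from the paper. The paper proves the lemma by duality: it forms the dual face $K^*$, uses the convex hull of the unique supporting hyperplanes of the $(d-2)$-faces of $F$ containing $K$ together with Remark \ref{remark: angular dim count} to see that $K^*$ is a face of $\Omega^*$ of dimension exactly $d-2-k$, and then identifies $K$ with the double dual $(K^*)^*$. You instead build the required supporting hyperplane explicitly: after the preparatory identity $\Pi_F\cap\overline{\Omega}=\overline{F}$ (which already settles the $(d-2)$-dimensional case, since Theorem \ref{theorem: no angular faces} forbids facets), you take a strictly positive combination of lifts of the supporting hyperplanes $\eta_i$ of the $(d-2)$-faces containing $K$, and positivity on the cone $C(\Omega)$ collapses its zero set in $\overline{\Omega}$ to $\bigcap_i(\eta_i\cap\overline{\Omega})=\bigcap_i K_i=K$. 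This is in effect exhibiting an interior point of the simplex that the paper later identifies with $K^*$ in Lemma \ref{lemma: flats dual to flats}, so the two proofs are dual to one another; yours is more elementary and self-contained (no appeal to $\Omega^*$ or the angularity dimension count), and your identity $\Pi_F\cap\overline{\Omega}=\overline{F}$ makes explicit the fact that each $\eta_i$ cuts out exactly $K_i$, which in the paper is absorbed into the double-dual bookkeeping; what the paper's route buys is that the same dual-face computation is immediately reused to construct the dual flat $F^*$. Two steps worth making explicit in your write-up: the inclusion $\Pi_F\cap\overline{\Omega}\subset\overline{F}$ is not literally obtained by ``passing to closures'' but follows from the same segment argument (for $p\in F$ and $w\in\Pi_F\cap\overline{\Omega}$ one has $[p,w)\subset\Pi_F\cap\Omega=F$, so $w\in\overline{F}$); and the equality $\eta_i\cap\overline{\Omega}=K_i$ used in your final computation is exactly what your $(d-2)$ paragraph established, combined with uniqueness of the supporting hyperplane from Theorem \ref{theorem: no angular faces}.
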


\begin{proof}
Let $K$ be a $k$-face of $F$. Let $L_1,\dots L_{d-1-k}$ be the $(d-2)$-faces of $F$ containing $K$ and $L_1^*,\dots,L_{d-1-k}^*$ their unique supporting hyperplanes (uniqueness by Theorem \ref{theorem: no angular faces}). Then $K=\cap_{i=1}^{d-1-k}L_i$. Let $K^*$ be the set of supporting hyperplanes containing $K$, so that the convex hull of $L_1^*,\dots L_{d-1-k}^*$ is a simplex contained in $K^*$ of dimension $(d-2-k)$. Note that $K^*$ is a face of $\Omega^*$ by Lemma \ref{lemma: little lemma}.

The dimension of $K^*$ is at least $(d-2-k)$ and the dimension of $K$ is $k$. Hence $K^*$ is a face of dimension $(d-2-k)$ and the dimension of $(K^*)^*$ is $k$ because $K\subset(K^*)^*$ (Remark \ref{remark: angular dim count}). So $(K^*)^*$ is a face of $\Omega$ containing $K$ and lying in the same $k$-plane as $K$. Thus every hyperplane supporting $K$ also supports $(K^*)^*$, and $(K^*)^*$ is a subset of $L_i$ for $i=1,\dots,n-1-k$. Therefore $(K^*)^*=K$, and the Lemma is proved.
\end{proof}

\begin{lemma}\label{lemma: flats dual to flats}
Let $F\in \mathscr{F}$ and $K$ be a $k$-face of $F$. The dual face $K^*$ is exactly the convex hull of the unique supporting hyperplanes to the $(d-2)$-faces of $F$ containing $K$. Hence, there is a flat $F^*$ of $\Omega^*$ which is the dual of $F$ (in the sense that it is a flat and each of its faces is dual to a face of $F$).
\end{lemma}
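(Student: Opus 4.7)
The plan is to construct $F^*$ explicitly as the convex hull in $(\RP^d)^*$ of the unique supporting hyperplanes of the $d$ facets of $F$, verify directly that this convex hull is a $(d-1)$-simplex properly embedded in $\Omega^*$, and then use the preceding lemma together with double duality of faces to identify $K^*$ for an arbitrary face $K$ of $F$.

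Let $K_1,\dots,K_d$ denote the $(d-2)$-facets of the simplex $F$ and $\eta_1,\dots,\eta_d\in(\RP^d)^*$ their unique supporting hyperplanes provided by Theorem \ref{theorem: no angular faces}. By Definition \ref{def: pseudo-dual} all $\eta_i$ pass through the pseudo-dual point $\hat F\in\RP^d\setminus\overline\Omega$, so the points $\eta_i$ lie on the hyperplane of $(\RP^d)^*$ consisting of functionals vanishing at $\hat F$. If the $\eta_i$ failed to span this hyperplane they would lie on a codimension-two projective subspace of $(\RP^d)^*$, whose projective dual is a line in $\RP^d$ contained in every $\eta_i$; this contradicts $\cap_i\eta_i=\{\hat F\}$ being a single point. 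Hence $F^*:=\mathrm{conv}(\eta_1,\dots,\eta_d)$ is a genuine $(d-1)$-simplex.

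Next I would show $F^*$ is properly embedded in $\Omega^*$. Normalize so $\eta_i\geq 0$ on $\overline\Omega$; by Lemma \ref{lemma: little lemma} this gives $\eta_i^{-1}(0)\cap\overline\Omega=K_i$. For an interior convex combination $\eta=\sum\lambda_i\eta_i$ with all $\lambda_i>0$, any $x\in\overline\Omega$ with $\eta(x)=0$ must satisfy $\eta_i(x)=0$ for each $i$, forcing $x\in\cap_iK_i=\emptyset$ (no point of a simplex lies on all its facets). Thus $\eta$ is disjoint from $\overline\Omega$ and lies in $\Omega^*$. For combinations on the boundary of $F^*$ some $\lambda_j=0$, and $\eta$ vanishes on the nonempty face $\cap_{\lambda_i>0}K_i$ of $F$, so $\eta\in\partial\Omega^*$. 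Therefore $F^*\in\mathscr{F}^*$.

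For the identification of $K^*$, given a $k$-face $K$ of $F$ let $S=\{i:K\subset K_i\}$, so $|S|=d-1-k$ and $K=\cap_{i\in S}K_i$. The set $J:=\mathrm{conv}(\eta_i:i\in S)$ is a $(d-2-k)$-face of the simplex $F^*$. Applying the preceding lemma to $F^*\in\mathscr{F}^*$ in the irreducible divisible domain $\Omega^*$ gives that $J$ is a face of $\Omega^*$. The same computation as above yields $J^*=\cap_{i\in S}K_i=K$, and double duality of faces in a properly convex body then gives $K^*=(J^*)^*=J^{**}=J=\mathrm{conv}(\eta_i:i\in S)$, which is the desired identification. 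The main obstacle is the apparent circularity between identifying $K^*$ via the preceding lemma applied to $F^*$ and needing $F^*$ to already be known to be a flat; this is resolved by establishing the simplex structure and proper embedding of $F^*$ directly from the combinatorics of the facets of $F$, before invoking any duality machinery.
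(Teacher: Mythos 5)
Your construction of $F^*$ and the verification that it is a properly embedded $(d-1)$-simplex in $\Omega^*$ are correct, and indeed more explicit than the paper, which disposes of this point with a terse appeal to self-duality of the simplex. Two small repairs there: the identity $\eta_i^{-1}(0)\cap\overline{\Omega}=K_i$ is not what Lemma \ref{lemma: little lemma} gives; it follows instead from Theorem \ref{theorem: no angular faces} together with the unnumbered lemma immediately preceding the statement (so that $K_i$ is an exposed face of $\Omega$ whose only supporting hyperplane is $\eta_i$). In the interior-combination step you can avoid this altogether: $\eta_i(x)=0$ for all $i$ forces $x\in\cap_i\eta_i=\{\hat F\}$, which lies outside $\overline{\Omega}$.

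The one substantive issue is the identification of $K^*$. To know that $J=\mathrm{conv}(\eta_i:i\in S)$ is an (exposed) face of $\Omega^*$ --- which is precisely what legitimizes your double-duality step $J^{**}=J$ --- you apply the unnumbered ``faces of flats are faces of the domain'' lemma to the flat $F^*\subset\Omega^*$; but that lemma's proof rests on Theorem \ref{theorem: no angular faces}, whose hypothesis is that the ambient domain is an \emph{irreducible divisible} convex set. So your argument requires that $\Omega^*$ is irreducible and divisible. This is true (irreducibility of the dual cone is elementary, and divisibility of $\Omega^*$ under the dual action of $\Gamma$ is classical, e.g. in Benoist's survey), but it is nowhere established in the paper, and the paper's own proof deliberately avoids it: there one bounds $\dim K^*\leq d-2-k$ using non-angularity of $K$ in $\Omega$ itself (Remark \ref{remark: angular dim count}), concludes that every supporting hyperplane of $K$ lies in the projective span of $\{\eta_i\}_{i\in S}$ and hence passes through $\hat F$, and then pins $K^*$ down to $\mathrm{conv}(\eta_i:i\in S)$ by self-duality of the simplex inside the hyperplane $\eta_F$. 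Note also that exposedness of $J$ in $\Omega^*$ cannot be obtained for free: taking $x_0\in\mathrm{int}(K)$ as the would-be exposing point and using Lemma \ref{lemma: little lemma} shows it is equivalent to $J=K^*$, i.e.\ to the statement being proved. So either cite dual divisibility and irreducibility explicitly (making your route correct but dependent on a nontrivial external fact), or close the final containment $K^*\subseteq J$ as the paper does, using only Theorem \ref{theorem: no angular faces} for $\Omega$.
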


\begin{proof}
It is immediate that $K^*$ at least contains the convex hull of the supporting hyperplanes to the $(d-2)$-faces meeting $K$. Additionally, any supporting hyperplane $\phi$ to $K$ must lie in the same $(d-2-k)$-plane as this convex hull, lest $K$ and $K^*$ contradict Theorem \ref{theorem: no angular faces}. Thus $\phi$ is a linear combination of hyperplanes supporting $(d-2)$-faces of $F$ and contains $\hat{F}$. 

Let $\eta_F$ be the hyperplane containing $F$ ($\eta_F$ is \textit{not} a supporting hyperplane). We have found that any supporting hyperplane $\phi$ to $K$ is incident to $\hat{F}$, and so is equal to the span of $\eta_F\cap \phi$ and $\hat{F}$. The result follows now from the fact that the simplex in every dimension is self-dual.
\end{proof}

We also obtain as a corollary the following, which is analogous to Proposition 3.1 in \cite{Benoist}. The case in dimension $3$ is straight-forward, as one only needs to consider vertices and edges.

\begin{corollary}\label{cor: flats are disj}
If $F_1$ and $F_2$ are distinct flats, then $\overline{F}_1 \cap \overline{F}_2=\emptyset$.
\end{corollary}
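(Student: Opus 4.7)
The plan is to argue by contradiction: assume $F_1 \neq F_2$ are distinct flats with $\overline{F_1} \cap \overline{F_2} \neq \emptyset$ and derive $F_1 = F_2$. Pick $x$ in the intersection; the goal is to reduce every configuration to forcing $F_1$ and $F_2$ to share their vertex sets.

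Suppose first $x \in F_1 \cap F_2 \subset \Omega$ and the hyperplanes $\eta_1, \eta_2$ containing $F_1, F_2$ coincide. For any vertex $v$ of $F_1$, the projective line $\ell = \overline{vx}$ lies in this common hyperplane; by proper convexity, $\ell \cap \overline{\Omega}$ is a single segment whose two endpoints lie on $\partial\Omega$. Both $\ell \cap \overline{F_1}$ and $\ell \cap \overline{F_2}$ are subsegments containing $x$ with endpoints on $\partial\Omega$, so each equals $\ell \cap \overline{\Omega}$; in particular $v \in \overline{F_2}$. Applying the $4$-chotomy (Proposition \ref{prop: 4-chotomy}) to the $0$-face $v$ of $\Omega$ and the smallest face of $F_2$ whose relative interior contains $v$ forces that face to be $\{v\}$, identifying $v$ as a vertex of $F_2$. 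By symmetry the flats share vertex sets and coincide. If instead $x \in F_1 \cap F_2$ with $\eta_1 \neq \eta_2$, then moving from $x$ along any direction in $\eta_1 \cap \eta_2$ keeps us in $F_1 \cap F_2$ locally; since $F_i \cap \partial\Omega = \emptyset$, the exit from $F_1$ and the exit from $F_2$ along this line must occur at the same point, producing a point of $\partial F_1 \cap \partial F_2 \subset \partial\Omega$ and reducing us to the boundary case.

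In the boundary case $x \in \overline{F_1} \cap \overline{F_2} \cap \partial\Omega$, the point $x$ lies in the relative interior of a face $K_i$ of $F_i$ for each $i$. Each $K_i$ is a face of $\Omega$ by the preceding lemma, and the $4$-chotomy forces $K_1 = K_2 =: K$. By Lemma \ref{lemma: flats dual to flats}, $K^*$ is a common face of the dual flats $F_1^*, F_2^*$ in $\Omega^*$. I then examine the opposite $(d - 2 - \dim K)$-face $K'_i$ of $K$ inside $F_i$: if $K'_1 = K'_2$, then $F_1 = F_2$ directly; otherwise the $4$-chotomy applied to the equal-dimensional $K'_1, K'_2$ either produces a proper common subface $L = K'_1 \cap K'_2$ (in which case the join $K * L$ is a strictly larger common face of $F_1$ and $F_2$ and the argument iterates) or yields disjointness, in which case passage to the dual $\Omega^*$ and reapplication of the argument eventually drops the situation back into the interior-same-hyperplane case.

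The main obstacle is the boundary case: the line-through-vertex argument cannot be run directly once $\eta_1 \neq \eta_2$, because a line from a vertex of $F_1$ through any point of $K$ lies entirely in $\eta_1$ and meets $\overline{F_2}$ only along $K$. Closing this case rests on combining Lemma \ref{lemma: flats dual to flats} with careful dimension-tracking of common and opposite faces, producing in each subcase either an immediate match $F_1 = F_2$, a larger common face to iterate on, or a reduction (via duality) to the interior-same-hyperplane scenario where the line-through-vertex argument closes out the proof.
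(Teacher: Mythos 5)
Your reduction to the boundary case is fine (and your interior cases can be shortened: two properly embedded flats spanning the same hyperplane $\eta$ both equal $\eta\cap\Omega$, and the transverse case exits along $\eta_1\cap\eta_2$ to a common boundary point, which is essentially the one-line reduction the paper makes). The identification $K_1=K_2=K$ via Proposition \ref{prop: 4-chotomy} and the observation that $K^*$ is a common face of $F_1^*$ and $F_2^*$ are also correct. The genuine gap is the terminal sub-case of your iteration, where the opposite faces $K_1'$ and $K_2'$ are disjoint: the claimed ``passage to the dual \dots eventually drops the situation back into the interior-same-hyperplane case'' has no mechanism behind it and cannot work as described. The dual flats $F_1^*$ and $F_2^*$ meet only along the common face $K^*\subset\partial\Omega^*$, so dualizing never produces an intersection in the interior of $\Omega^*$; and the dual-side iteration stalls in exactly the same configuration you are trying to escape. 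Concretely, if $K$ is a common vertex $v$ and the opposite $(d-2)$-faces are disjoint, then in $\Omega^*$ the flats share the $(d-2)$-face $v^*$ and their opposite faces are two distinct (hence disjoint) vertices, so neither your join step nor another dualization makes progress. Since the whole point of the corollary is that this configuration is impossible, leaving it to an unspecified ``reduction via duality'' is a missing argument, not a deferred one. (A secondary point: your join step needs the observation, via Lemma \ref{lemma: little lemma}, that the face $L=K_1'\cap K_2'$ of $\Omega$ is spanned by common vertices of the two simplices; this is true but not automatic.)

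What actually closes the boundary case — and is the paper's route — is to use the common dual face itself rather than the opposite faces. By Lemma \ref{lemma: flats dual to flats}, the vertices of $K^*$ (after reducing, as the paper does by minimality, to a common vertex $v$, these are the $d-1$ vertices of the $(d-2)$-simplex $v^*$) are precisely the unique supporting hyperplanes of the $(d-2)$-faces of $F_1$ containing $K$, and simultaneously of the $(d-2)$-faces of $F_2$ containing $K$. Since by Theorem \ref{theorem: no angular faces} such a $(d-2)$-face is recovered from its supporting hyperplane $\eta$ as $\eta\cap\partial\Omega$, each of these hyperplanes cuts a $(d-2)$-face common to $F_1$ and $F_2$. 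Starting from a common vertex and using $d\geq 3$, there are at least two such shared $(d-2)$-faces, and their union contains every vertex of both simplices, forcing $F_1=F_2$. You have all the ingredients in hand ($K^*$ as a common face of $F_1^*,F_2^*$ and Lemma \ref{lemma: flats dual to flats}); the missing idea is to read off the shared $(d-2)$-faces from the vertices of $K^*$ instead of iterating on opposite faces.
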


\begin{proof}
Suppose $\overline{F}_1$ and $\overline{F}_2$ are not disjoint. Since $\overline{F}_1$ and $\overline{F}_2$ are codimension-$1$ in $\Omega$ and properly embedded, their boundaries intersect. So there is some face of $F_1$ intersecting some face of $F_2$, say $K_1$ and $K_2$ respectively, and let these faces be the minimal faces under inclusion with this property. By Proposition \ref{prop: 4-chotomy} and the required minimality $K_1=K_2$. However if $K_1$ and $K_2$ were not vertices, they would not be minimal. So $K_1=K_2=v$ a vertex of both $F_1$ and $F_2$. By Lemma \ref{lemma: flats dual to flats}, the dual $v^*$ of this vertex is a $(d-2)$-face of $\Omega^*$, and the $(d-1)$ vertices of $v^*$ are a collection of supporting hyperplanes to $\Omega$, each supporting $(d-2)$-face of both $F_1$ and $F_2$. 

We have that $F_1$ and $F_2$ must share $(d-2)$ of their $(d-2)$-faces. However, these $(d-2)$-faces contain the entire vertex sets of both $F_1$ and $F_2$, which must thus be equal as flats are equal to the convex hulls of their vertex sets. So $F_1=F_2$ when $\overline{F}_1 \cap \overline{F}_2\neq\emptyset$.
\end{proof}

The next lemma is not deep, but is useful. It says that no face which is not contained in $\partial F$ may intersect $\partial F$.

\begin{lemma}\label{lemma: boundary segments and flats}
If $F$ is a flat, and a segment $\sigma$ in $\partial \Omega$ meets $\partial F$, then $\sigma \subset \partial F$.
\end{lemma}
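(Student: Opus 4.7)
The plan is to pass to duality: show that $\sigma^*\subset\partial\Omega^*$ sits inside the simplex $K^*$, where $K$ is the smallest face of $F$ containing a chosen point $a\in\sigma\cap\partial F$, and then push this containment back to conclude $\sigma$ lies in a proper face of $F$. Since a degenerate segment $\sigma=\{a\}$ already satisfies the conclusion, we may assume $\sigma$ is non-degenerate.

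Fix $a\in\sigma\cap\partial F$ and let $K$ be the smallest face of $F$ containing $a$, so setting $k=\dim K$ we have $a\in\mathrm{int}(K)$. By the preceding lemma, $K$ is a face of $\Omega$. Let $L_1,\dots,L_{d-1-k}$ be the $(d-2)$-faces of $F$ containing $K$, with their (unique, by Theorem \ref{theorem: no angular faces}) supporting hyperplanes $H_1,\dots,H_{d-1-k}$; by Lemma \ref{lemma: flats dual to flats}, $K^*$ is the simplex $\mathrm{conv}\{H_1,\dots,H_{d-1-k}\}$. Now apply Lemma \ref{lemma: little lemma} to $\sigma$: the set $\sigma^*$ of supporting hyperplanes of $\Omega$ containing $\sigma$ is a non-empty face of $\Omega^*$. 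Any $\eta\in\sigma^*$ contains $a\in\mathrm{int}(K)$, so a second application of Lemma \ref{lemma: little lemma} (this time to $K$) forces $\eta\supset K$, i.e., $\eta\in K^*$; hence $\sigma^*\subset K^*$.

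An exposed face of $\Omega^*$ is also a face in the convex-analytic sense, so $\sigma^*$ is a convex-analytic face of the simplex $K^*$, and such a face is a sub-simplex: $\sigma^*=\mathrm{conv}\{H_{i_1},\dots,H_{i_m}\}$ for some non-empty sub-collection of vertices. Each $H_{i_j}\in\sigma^*$ gives $\sigma\subset H_{i_j}$, and because $L_{i_j}$ has $H_{i_j}$ as its unique supporting hyperplane we have $H_{i_j}\cap\partial\Omega=L_{i_j}$. Therefore $\sigma\subset\bigcap_j L_{i_j}$, which is the proper face of $F$ obtained by deleting from $F$ the vertices opposite the $L_{i_j}$, and so $\sigma\subset\partial F$. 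The main delicate step is the passage ``$\sigma^*\subset K^*\Rightarrow\sigma^*$ is a sub-simplex of $K^*$'', which rests on the interplay between exposed and convex-analytic faces in $\Omega^*$ together with the rigidity of simplices; the rest is bookkeeping on vertex sets of sub-simplices of $F$.
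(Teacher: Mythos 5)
Your argument is correct, and it reaches the conclusion by a route that is related to, but not the same as, the paper's. The paper fixes $z\in\sigma\cap\partial F$, takes $L$ the minimal face of $\Omega$ containing $\sigma$ and $K$ the minimal face of $F$ containing $z$, and uses the 4-chotomy (Proposition \ref{prop: 4-chotomy}) together with minimality to conclude $K=L$ or $K\subset\partial L$; in the latter case it dualizes the inclusion to get $L^*\subset K^*$, so that $L^*$ is a face of $F^*$ and hence $L$ is a face of $F$ by Lemma \ref{lemma: flats dual to flats}, giving $\sigma\subset L\subset\partial F$. You bypass Proposition \ref{prop: 4-chotomy} and the auxiliary face $L$ entirely: you dualize the segment itself, using Lemma \ref{lemma: little lemma} twice to place $\sigma^*$ inside the simplex $K^*=\mathrm{conv}\{H_1,\dots,H_{d-1-k}\}$ furnished by Lemma \ref{lemma: flats dual to flats} and Theorem \ref{theorem: no angular faces}, and then use the face structure of a simplex to extract a vertex $H_{i_j}\in\sigma^*$, so that $\sigma\subset H_{i_j}\cap\partial\Omega=L_{i_j}\subset\partial F$ (the identification $H_{i_j}\cap\partial\Omega=L_{i_j}$ is legitimate because the paper's faces are exposed and $H_{i_j}$ is the unique supporting hyperplane of $L_{i_j}$). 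The step you flag as delicate --- an exposed face of $\overline{\Omega^*}$ is a face in the convex-analytic sense, and a face of the ambient body contained in $K^*$ is a face of $K^*$, hence a sub-simplex --- is standard and correct (working in an affine chart containing $\overline{\Omega^*}$, as the paper does elsewhere); it is essentially the point the paper leaves implicit when it asserts that $L^*\subset K^*$ makes $L^*$ a face of $F^*$. If you want to avoid the exposed-versus-extreme discussion altogether, note that any $\eta\in\sigma^*$ is a convex combination $\sum_i t_iH_i$ with $t_i\geq 0$ of functionals nonnegative on a lift of $\overline{\Omega}$, and vanishing of $\eta$ on $\sigma$ forces $H_i$ to vanish on $\sigma$ whenever $t_i>0$, which produces the required vertex directly. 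In exchange for slightly more convex-geometry bookkeeping, your version makes explicit the mechanism that the paper's terser proof relies on.
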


\begin{proof}
Let $z$ be a point in $\partial F\cap \sigma$, and let $L$ be the minimal (under inclusion) face of $\Omega$ containing $\sigma$. The point $z$ is in some face $K$ of $F$, and we assume $K$ to be the minimal such face under inclusion. By Proposition \ref{prop: 4-chotomy} and the assumed minimality, either $K=L$ (in which case $L\subset \partial F$ as required) or $K\subset \partial L$. In the latter case, we have that $L^*\subset K^*$. However, then $L^*$ is a face of $F^*$, and $L$ is a face of $F$ by Lemma \ref{lemma: flats dual to flats}.
\end{proof}

The following is a technical lemma about Hilbert geometry necessary for Lemma \ref{lemma: bounded distance flats don't exist}.

\begin{lemma}\label{lemma: bounded distance sequences}
Suppose that $\{a_i\}_{i=1}^\infty$ and $\{b_i\}_{i=1}^\infty$ are sequences in $\Omega$ which converge to $a_\infty$ and $b_\infty$ respectively, both elements of $\partial \Omega$. If $d(a_i, b_i)$ is uniformly bounded above, then either $a_\infty=b_\infty$ or the segment $[a_\infty, b_\infty]$ is a strict subset of a segment in $\partial \Omega$.
\end{lemma}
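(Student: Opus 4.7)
The plan is to work in a fixed affine chart containing $\overline{\Omega}$, and chase the cross-ratio formula along the lines through $a_i$ and $b_i$. Let $\ell_i$ be the projective line containing $a_i$ and $b_i$, and let $z_1^i, z_2^i \in \partial\Omega \cap \ell_i$ be the two boundary points arranged so that they appear in the order $z_1^i, a_i, b_i, z_2^i$ on $\ell_i$. Pass to a subsequence so that $z_1^i \to z_1^\infty$ and $z_2^i \to z_2^\infty$ in $\partial\Omega$; the lines $\ell_i$ converge to a line $\ell_\infty$ containing $a_\infty, b_\infty, z_1^\infty, z_2^\infty$ (if $a_\infty=b_\infty$ we are done, so assume otherwise, in which case $\ell_\infty$ is well-defined as the line through the two distinct points).

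In an affine parametrization of $\ell_\infty$, write the cross-ratio as
\[
[z_1^i, a_i, b_i, z_2^i] = \frac{(b_i - z_1^i)(z_2^i - a_i)}{(a_i - z_1^i)(z_2^i - b_i)}.
\]
The hypothesis that $d_\Omega(a_i, b_i)$ is uniformly bounded above forces this cross-ratio to stay bounded. Under the assumption $a_\infty \neq b_\infty$, I would observe that if $z_1^\infty = a_\infty$, then the factor $(a_i - z_1^i)$ in the denominator tends to $0$ while the numerator factor $(b_i - z_1^i)$ tends to $b_\infty - a_\infty \neq 0$, so the cross-ratio blows up, a contradiction. The symmetric argument excludes $z_2^\infty = b_\infty$. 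Hence $z_1^\infty$ and $z_2^\infty$ are boundary points of $\Omega$ lying strictly outside the closed segment $[a_\infty, b_\infty]$ on $\ell_\infty$.

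To finish, I would argue that the segment $[z_1^\infty, z_2^\infty]$ lies entirely in $\partial\Omega$. By convexity it is contained in $\overline{\Omega}$; if any interior point of it were in $\Omega$, then since $\Omega$ is open, some point of the open segment $(z_1^\infty, z_2^\infty)$ strictly between would lie in $\Omega$, and by the standard fact that a chord of $\Omega$ from a point of $\Omega$ to a boundary point has its relative interior in $\Omega$, the points $a_\infty$ and $b_\infty$ would be in $\Omega$, contradicting $a_\infty, b_\infty \in \partial\Omega$. Therefore $[z_1^\infty, z_2^\infty] \subset \partial\Omega$ is a segment in $\partial\Omega$ strictly containing $[a_\infty, b_\infty]$, which is the desired conclusion.

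The main obstacle I foresee is purely bookkeeping: making sure the affine-chart parametrization of the moving lines $\ell_i$ is uniform enough that the cross-ratio computation above genuinely controls the limit (rather than having the line flip or escape to infinity in the chart). Choosing the affine chart so that $\overline{\Omega}$ is compact in it, and parametrizing each $\ell_i$ by a unit-speed affine coordinate, makes these limits straightforward, but it is the one place where care is required.
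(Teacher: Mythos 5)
Your proposal is correct and follows essentially the same route as the paper: take the boundary endpoints $z_1^i, z_2^i$ of the chords through $a_i, b_i$, pass to convergent subsequences, and use boundedness of the cross-ratio (equivalently of $d_\Omega(a_i,b_i)$) to rule out $z_1^\infty=a_\infty$ and $z_2^\infty=b_\infty$, so that the limit segment strictly contains $[a_\infty,b_\infty]$. Your final paragraph just makes explicit the (correct) convexity argument that $[z_1^\infty,z_2^\infty]\subset\partial\Omega$, which the paper leaves implicit.
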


\begin{proof}
Suppose that $a_\infty \neq b_\infty$. We may assume that $a_i$ and $b_i$ are never equal. For each $i$, let $z_i, w_i$, be the pair of points in $\partial \Omega$ which lie on the line $span(a_i, b_i)$, and let them be chosen so that they appear in the order $z_i,a_i,b_i,w_i$. The sequences $\{z_i\}_{i=1}^\infty$ and $\{w_i\}_{i=1}^\infty$ must converge, let us call the limits $z_\infty$ and $w_\infty$ respectively.

For each $i$ consider $C_i=[z_i,a_i,b_i,w_i]$, the cross ratio. By definition $C_i$ is between $1$ and $\infty$, and the assumption of the Lemma is that there is an $N$ so that $1\leq C_i \leq N$.  Hence $a_\infty\neq z_\infty$, lest $b_\infty=z_\infty$ as well. The symmetrical statement also holds, so $z_\infty, a_\infty, b_\infty$ and $w_\infty$ are four distinct colinear points in $\partial \Omega$. Thus the Lemma holds.

\end{proof}

In two of the following proofs, we encounter a pair of flats that are at bounded distance from each other `at infinity'. This is not an arrangement that can occur in a convex divisible set, as is made clear by the following lemma. This is our first result about the geometry of flats in $\Omega$ coming from the geometry of faces in $\partial \Omega$.

\begin{lemma}\label{lemma: bounded distance flats don't exist}
If $F_1$ and $F_2$ are elements of $\mathscr{F}$, and $(x_i)_{i=1}^\infty$ and $(y_i)_{i=1}^\infty$ are sequences in $F_1$ and $F_2$ converging to $x_\infty\in\partial F_1$ and $y_\infty\in \partial F_2$ respectively, then $d(x_i,y_i)$ tends to infinity.
\end{lemma}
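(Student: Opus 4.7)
The plan is to argue by contradiction. Suppose that $d(x_i,y_i)$ does not tend to infinity; after passing to a subsequence I may assume $d(x_i,y_i) \leq N$ for some constant $N$. The statement is only meaningful when $F_1 \neq F_2$ (otherwise one can take $x_i = y_i$), so I assume this; this is how the lemma will be applied. Then Corollary \ref{cor: flats are disj} gives $\overline{F_1} \cap \overline{F_2} = \emptyset$, so in particular $x_\infty \neq y_\infty$.

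With both hypotheses of Lemma \ref{lemma: bounded distance sequences} in hand ($d(x_i,y_i)$ uniformly bounded, $x_\infty \neq y_\infty$), I conclude that the segment $[x_\infty,y_\infty]$ is a strict subset of a segment $\sigma \subset \partial \Omega$. Inspecting the proof of that lemma, $\sigma$ can be taken to be the segment with endpoints $z_\infty$ and $w_\infty$, the limits of the cross-ratio endpoints on the line through $x_i$ and $y_i$; the four points $z_\infty, x_\infty, y_\infty, w_\infty$ are distinct and collinear, so both $x_\infty$ and $y_\infty$ lie in the relative interior of $\sigma$.

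Now $\sigma$ meets $\partial F_1$ at the point $x_\infty$, so Lemma \ref{lemma: boundary segments and flats} forces $\sigma \subset \partial F_1$. The symmetric argument applied to $F_2$ and $y_\infty$ gives $\sigma \subset \partial F_2$. Hence $\sigma \subset \overline{F_1} \cap \overline{F_2}$, contradicting Corollary \ref{cor: flats are disj}.

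I do not anticipate any substantive obstacle: each of the three cited results (Corollary \ref{cor: flats are disj}, Lemma \ref{lemma: bounded distance sequences}, Lemma \ref{lemma: boundary segments and flats}) supplies exactly one link, and the steps chain together without extra work. The one mild subtlety is that the application of Lemma \ref{lemma: boundary segments and flats} needs a segment that actually hits $\partial F_1$ and $\partial F_2$; reading out the larger segment $[z_\infty, w_\infty]$ from inside the proof of Lemma \ref{lemma: bounded distance sequences} (rather than just invoking its statement) is what makes this step go through cleanly.
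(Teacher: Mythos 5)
Your proof is correct and takes essentially the same route as the paper: disjointness of distinct flats (Corollary \ref{cor: flats are disj}) rules out $x_\infty=y_\infty$, Lemma \ref{lemma: boundary segments and flats} rules out a segment of $\partial\Omega$ through both limit points, and Lemma \ref{lemma: bounded distance sequences} then forces $d(x_i,y_i)\to\infty$. Your one ``mild subtlety'' is unnecessary: the statement of Lemma \ref{lemma: bounded distance sequences} already provides a segment of $\partial\Omega$ containing $x_\infty$ and $y_\infty$, and since $x_\infty\in\partial F_1$ (resp.\ $y_\infty\in\partial F_2$) that segment meets $\partial F_1$ (resp.\ $\partial F_2$), which is all Lemma \ref{lemma: boundary segments and flats} needs.
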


\begin{proof}
Proposition \ref{cor: flats are disj} (flats do not intersect in $\partial \Omega$) implies that $x_\infty\neq y_\infty$, and Lemma \ref{lemma: boundary segments and flats} shows there is no segment in $\partial \Omega$ containing $x_\infty$ and $y_\infty$ either. Thus, by Lemma \ref{lemma: bounded distance sequences}, the distance between $x_i$ and $y_i$ must tend to infinity.
\end{proof}

\subsection{An analogue of normal projections}

The pseudo-dual $\hat{F}$ furnishes flats with a distinguished normal direction. This is something special in Hilbert geometry, as a norm on the tangent space (unlike an inner product) does not distinguish a normal direction.

\begin{definition}
For each point $x\in F$ let $n_F(x)$ be the \textit{normal to $F$ at $x$} (or simply \textit{the normal direction} when clear from context), defined as the line spanned by $\hat{F}$ and $x$.
\end{definition}

This is a reasonable substitute for the normal line in Riemannian geometry. In particular, it allows for the following analogue of Sublemma 2 from Schroeder \cite{Schroeder}. Schroeder is interested in producing closed flats in finite-volume quotients of Hadamard manifolds, and this sublemma ensures that if two disjoint codimension-$1$ flats come sufficiently close but are disjoint, then the normal line to one must intersect the other. Lemma \ref{lemma: close flats project} is the convex projective analogue. This will be a useful tool in Section \ref{section: main theorem}, as we must deal with the possibility that sequences of flats may accumulate, and the normal direction gives a foot-hold in this scenario. 

\begin{lemma}\label{lemma: close flats project}
There exists a number $\epsilon_\Omega>0$ so that if $F,F'\in \mathscr{F}$, $x\in F$ and $d(x,F')<\epsilon_\Omega$, then $n_F(x)\cap F' \neq \emptyset$.
\end{lemma}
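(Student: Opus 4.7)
The plan is to argue by contradiction, leaning on three facts: the cocompactness of the $\Gamma$-action on $\Omega$, the closedness of $\mathscr{F}$ in the geometric topology (noted just after the definition of $\mathscr{F}$), and the disjointness of distinct flats (Corollary \ref{cor: flats are disj}). Suppose no such $\epsilon_\Omega$ exists. Then one obtains sequences $F_i, F_i' \in \mathscr{F}$ with $x_i \in F_i$ and $y_i \in F_i'$ such that $d(x_i,y_i) \to 0$ while $n_{F_i}(x_i)\cap F_i' = \emptyset$. First I would translate by $\gamma_i \in \Gamma$ so that $x_i$ lies in a fixed compact fundamental domain, then extract a subsequence with $x_i \to x_\infty \in \Omega$, and, by taking further subsequences of the vertex sets in $\partial \Omega$, pass to geometric limits $F_i \to F_\infty$ and $F_i' \to F_\infty'$. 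Both limits contain $x_\infty \in \Omega$, so the remark following the definition of flats guarantees that they are honest flats, and Corollary \ref{cor: flats are disj} forces $F_\infty = F_\infty'$; call this common flat $F$.

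The core of the argument is then a transversality statement. For any flat $G$ let $\pi_G$ denote the projection from $\hat G$ onto the hyperplane $\eta_G$ containing $G$; since $\hat G \notin \eta_G$ this is a genuine projection, and by definition $n_G(x)\cap G'$ is nonempty precisely when $x \in \pi_G(G')$. Using the uniqueness of supporting hyperplanes to $(d-2)$-faces (Theorem \ref{theorem: no angular faces}), I would show that $G \mapsto \hat G$ is continuous at $F$, so that $\hat{F}_i \to \hat F$ and the maps $\pi_{F_i}$ converge to $\pi_F$ uniformly on compact sets that avoid a neighborhood of $\hat F$. Combined with $F_i' \to F$, this yields $\pi_{F_i}(F_i') \to F$ in Hausdorff distance. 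Because $x_\infty$ lies in the relative interior of the open simplex $F$ and $x_i \to x_\infty$, we must have $x_i \in \pi_{F_i}(F_i')$ for all large $i$, contradicting $n_{F_i}(x_i) \cap F_i' = \emptyset$.

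The step I expect to be the main obstacle is establishing the continuity of the pseudo-dual under geometric limits. This hinges on Theorem \ref{theorem: no angular faces}: the supporting hyperplane to each $(d-2)$-face of $F_i$ is unique, so any subsequential limit of these hyperplanes must be the unique supporting hyperplane to the corresponding $(d-2)$-face of $F$; intersecting these gives $\hat{F}_i \to \hat F$. Once this continuity is in hand, the remaining transversality step is routine: convergence of the defining data of $\pi_{F_i}$ together with $F_i' \to F$ gives uniform control on the image sets, and the fact that $x_\infty$ sits in the relative interior of the open simplex $F$ closes out the contradiction.
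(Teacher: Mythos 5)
Your proposal is correct, but it takes a genuinely different route from the paper. The paper's proof is direct and local: for fixed $x\in F$ it works in each $2$-dimensional slice $\eta$ through $x$ and $n_F(x)$, notes that the region $R$ spanned by the endpoints of $F\cap\eta$ and of $n_F(x)$ in $\partial\Omega$ contains a metric ball about $x$ and that every line in $\eta$ meeting $R$ must meet $F$ or $n_F(x)$, and then produces a uniform $\epsilon_\Omega$ by taking infima over the compact family of slices and over $x$ in a compact fundamental domain, using closedness of $\mathscr{F}$ and cocompactness (with Corollary \ref{cor: flats are disj} implicitly ruling out the alternative that $F'$ meets $F$). Your proof is a softer contradiction-by-limits argument whose key extra ingredient is the continuity of the pseudo-dual $F\mapsto \hat{F}$ under geometric limits, which you correctly reduce to uniqueness of supporting hyperplanes at $(d-2)$-faces (Theorem \ref{theorem: no angular faces}), combined with Corollary \ref{cor: flats are disj} to identify the two limiting flats; the ingredients overlap, but the mechanism is different. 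What each buys: the paper's slice argument is more elementary and, as remarked there, upgrades via Benzecri cocompactness (Theorem \ref{theorem: Benzecri ccpt}) to an $\epsilon$ depending only on the dimension, which your fixed-$\Omega$ limiting argument does not immediately yield; your argument avoids the two-dimensional reduction and packages all the uniformity into a single compactness extraction. One point you should make explicit in the final step: Hausdorff convergence of $\pi_{F_i}(\overline{F_i'})$ to $\overline{F}$ alone does not force $x_i\in\pi_{F_i}(F_i')$; you should note that, since $\hat{F}_i\to\hat{F}\notin\overline{\Omega}$ and the projected vertices of $F_i'$ converge to the vertices of $F$, the image is for large $i$ a nondegenerate $(d-1)$-simplex and $\pi_{F_i}$ restricts to a projective isomorphism on the hyperplane spanned by $F_i'$, so that $x_i$ lies in the relative interior of the image and its preimage lies in the open flat $F_i'$, giving the desired point of $n_{F_i}(x_i)\cap F_i'$. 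With that routine detail spelled out, your argument is complete.
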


\begin{figure}[h]
\centering
\includegraphics[width=.5 \textwidth]{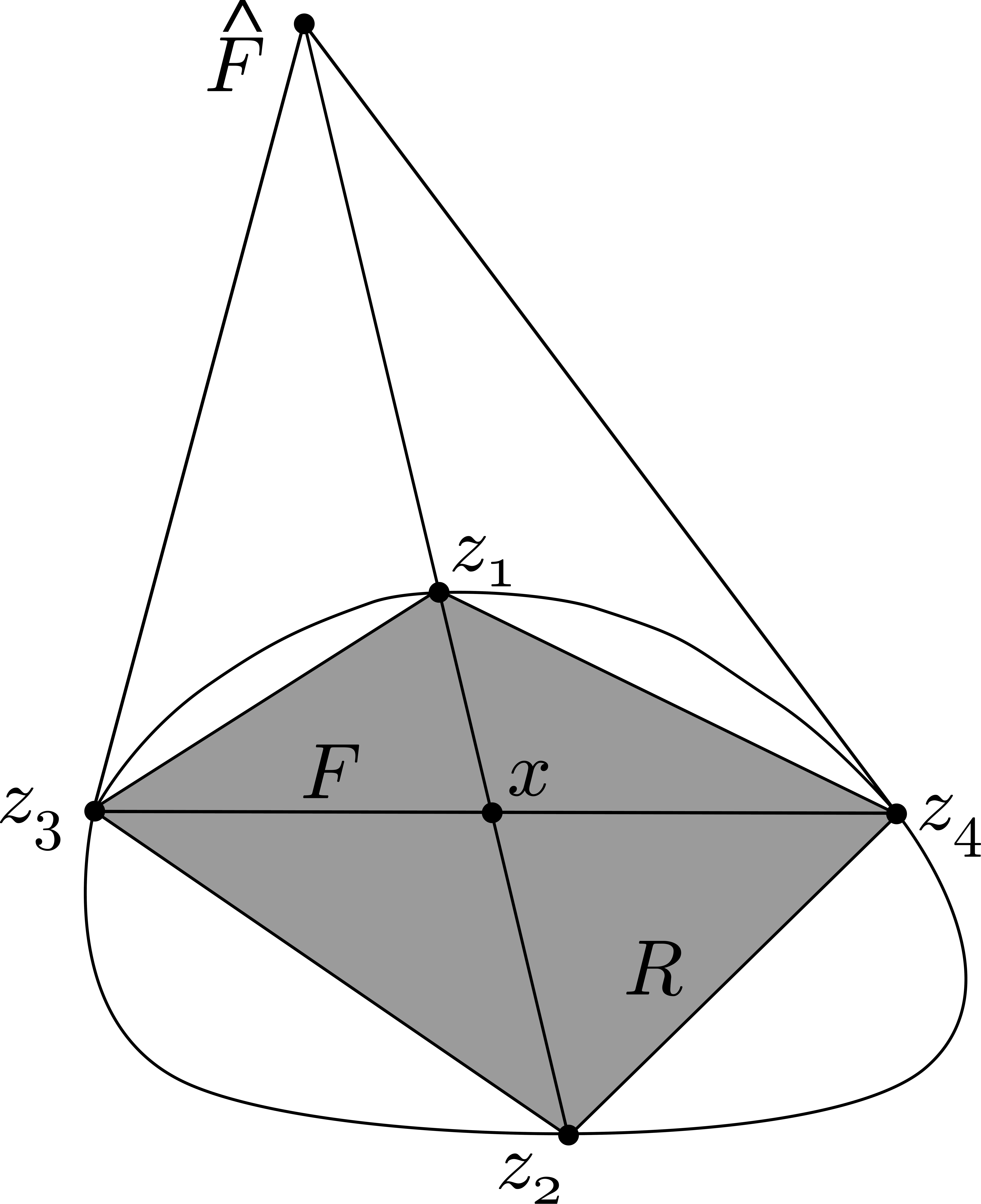}
\caption{A $2$-dimensional slice of $\Omega$ containing $x$ and $n_F(x)$.}\label{figure: 2-slice}
\end{figure}

\begin{proof}
For a fixed $x\in F$, consider any $2$-dimensional subspace $\eta$ intersecting $\Omega$ and containing $x$ and $n_F(x)$. Refer to  Figure \ref{figure: 2-slice}. The region $R$ is the convex hull in $\eta$ of the four points $z_1,\dots,z_4$ which are the intersection of $F$ with $\eta\cap\partial\Omega$ and $n_F(x)$ with $\eta\cap \partial\Omega$. Note that any line in $\eta$ containing a point in $R$ must intersect either $F$ or $n_F(x)$, and that $R$ contains some metric ball of $x$ (with respect to the Hilbert metric on $\eta\cap \Omega$). This shows that there is some lower bound $\epsilon_{\Omega,x,\eta}$ so that if $F'\in\mathscr{F}$ is within $\epsilon_{\Omega,x,\eta}$ of $x$ and intersects $\eta$, then $F'$ intersects either $F$ or $n_F(x)$.

Now we may take an infimum over the set of such slices to find a positive lower bound $\epsilon_{\Omega, x}$ for a fixed $x$ and $F$ (the set of two-planes through a point is compact). Then, using cocompactness of $\Gamma$ acting on $\Omega$, we may take an infimum over all $x\in F$ in some compact fundamental domain for this action. This infimum $\epsilon_\Omega$ is greater than zero, because $\mathscr{F}$ is closed, and hence its intersection with this fundamental domain is compact. The lemma is proved.
\end{proof}

A more brief statement of the proof of Lemma \ref{lemma: close flats project} is that $\Gamma$ acts cocompactly on the set of two-planes through points in flats, and hence we need only prove the lemma for arbitrary $2$-dimensional slices and take an infimum over this set.

\begin{remark}
Note that one could apply Benzecri's cocompactness, Theorem \ref{theorem: Benzecri ccpt} to take an infimum over all $\Omega$ and find an $\epsilon$ satisfying the conditions of the lemma and depending only on dimension.
\end{remark}

The normal line also provides a projection of $\Omega$ onto $F$, which is a distance non-increasing function. This fact is the content of the next lemma.

\begin{lemma}\label{lemma: normal projection d decreasing}
The map taking a point $y\in\Omega$ to the unique point $\pi_F(y)\in F$ which lies on the line $span(y,\hat{F})$ is a distance non-increasing projection.
\end{lemma}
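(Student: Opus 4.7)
The plan is to reduce to a two-dimensional slice and then compute cross-ratios directly. Let $y_1, y_2 \in \Omega$, and write $z_i = \pi_F(y_i)$. If $\hat F$ lies on the line through $y_1$ and $y_2$, then $z_1 = z_2$ and the inequality is trivial, so assume otherwise and let $P$ be the $2$-plane spanned by $\hat F, y_1, y_2$. A standard observation is that the Hilbert metric $d_\Omega$ restricted to $P \cap \Omega$ agrees with the Hilbert metric of the $2$-dimensional convex domain $\Omega \cap P$, since each distance is computed along a line contained in $P$. Within $P$ the map $\pi_F$ is simply the central projection from $\hat F$ onto the line $\ell_F$ containing $F \cap P$; denote the two endpoints of $F \cap P$ in $\partial \Omega$ by $a$ and $b$.

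The essential geometric fact is that, by the definition of $\hat F$ as the common intersection of the supporting hyperplanes of the $(d-2)$-faces of $F$ (well-defined by Theorem \ref{theorem: no angular faces}), $\hat F$ appears in $P$ as the intersection of the two lines supporting $\Omega \cap P$ at $a$ and $b$. Consequently $\overline{\Omega \cap P}$ is contained in the closed wedge from $\hat F$ bounded by these two supporting lines, and every ray from $\hat F$ through a point of $\overline{\Omega \cap P}$ meets $\ell_F$ inside the segment $[a,b]$. Let $\ell_0$ be the line through $y_1, y_2$ and let $p_0, q_0$ be its intersections with $\partial \Omega$, ordered so that $p_0, y_1, y_2, q_0$ appear in that order on $\ell_0$. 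Central projection from $\hat F$ is a cross-ratio preserving bijection $\ell_0 \to \ell_F$, sending the four points to $p', z_1, z_2, q'$, all of which lie in the bounded segment $[a,b] \subset \ell_F$. Because the images are confined to a bounded arc, their affine order on $\ell_F$ agrees with their cyclic order under the projection; after possibly swapping the labels of $a$ and $b$, I may assume $a \le p' \le z_1 \le z_2 \le q' \le b$.

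The conclusion is then a direct cross-ratio inequality. We have $d_\Omega(y_1,y_2) = \tfrac{1}{2} \log [p_0, y_1, y_2, q_0] = \tfrac12 \log [p', z_1, z_2, q']$ by projective invariance of the cross-ratio, while $d_\Omega(z_1, z_2) = \tfrac{1}{2} \log [a, z_1, z_2, b]$. Writing the cross-ratio as $\bigl(1 + |z_1 z_2|/|a z_1|\bigr)\bigl(1 + |z_1 z_2|/|z_2 b|\bigr)$, and similarly for $[p', z_1, z_2, q']$, the inequality $[a, z_1, z_2, b] \le [p', z_1, z_2, q']$ follows immediately from $|a z_1| \ge |p' z_1|$ and $|z_2 b| \ge |z_2 q'|$, which are the orderings established above.

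I expect the main point to be the geometric identification of $\hat F$ as the apex of the supporting wedge of $\Omega \cap P$ at $a, b$, since this is exactly what places the projected boundary points $p', q'$ inside $[a, b]$ and outside $[z_1, z_2]$. The cross-ratio step is elementary, and the degenerate collinear case is absorbed at the outset.
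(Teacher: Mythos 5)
Your proof is correct and takes essentially the same route as the paper's: restrict to the $2$-plane through $\hat{F}$, $y_1$, $y_2$, use that the supporting lines through $\hat{F}$ at the endpoints $a,b$ of $F\cap P$ confine $\Omega\cap P$ to the wedge with apex $\hat{F}$, and conclude by projective invariance together with monotonicity of the cross-ratio in the outer points. The only cosmetic difference is that you project all four points to $\ell_F$ and compare $[p',z_1,z_2,q']$ with $[a,z_1,z_2,b]$ there, whereas the paper performs the equivalent comparison on the line through the two original points against its intersections with the cone over $\partial F$ from $\hat{F}$ and then projects.
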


\begin{proof}
Let $x, y$ be points in $\Omega$. Consider the two dimensional slice of $\RP^d$ containing $\hat{F}$, $x$, and $y$ (if these three points are not in general position, the distance under projection of $x$ and $y$ is zero, and the lemma holds). Consider Figure \ref{figure: normal distance decreasing}. The line containing $x$ and $y$ contains two points in $\partial \Omega$, let them be $z_1$ and $z_2$. The line containing $x$ and $y$ also intersects the cone from $\partial F$ to $\hat{F}$ in a pair of points, $z_1'$ and $z_2'$ which are necessarily arranged as in the figure.

Now we claim that
$$d_\Omega(x,y)=\frac{1}{2}\log [z_1,x,y,z_2] \geq \frac{1}{2}\log [z_1',x,y,z_2']=d_\Omega(\pi_F(x),\pi_F(y)).$$

\begin{figure}[h]
\centering
\includegraphics[width=.5 \textwidth]{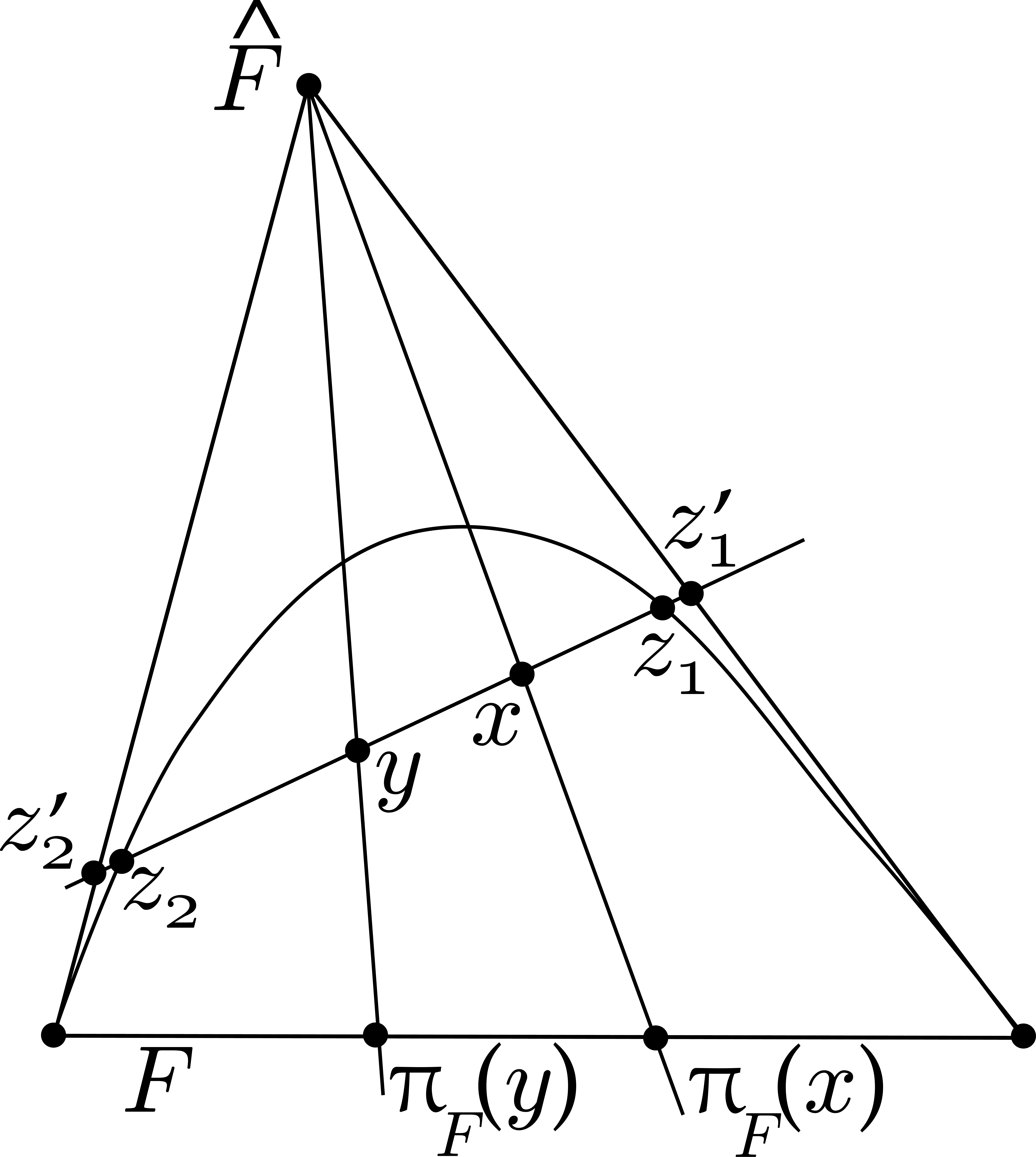}
\caption{The projection of two points onto a flat.}\label{figure: normal distance decreasing}
\end{figure}

The first equality is by definition of $d_\Omega$, the inequality is a property of the cross-ratio, combined with the fact that $\frac{1}{2}\log(\cdot)$ is an increasing function. The last equality is due to the projective invariance of the cross-ratio.
\end{proof}

We will use these properties of $n_F(x)$ and $\pi_F(x)$ in the following proofs.

\section{Discreteness of $\mathscr{F}$}\label{section: the hard work}

Having established the tools necessary to explore the space $\mathscr{F}$ we are prepared to demonstrate its discreteness in this section. Let us begin by separating $\mathscr{F}$ into two subsets, one more manageable than the other. Write
$$\mathscr{F}_\partial = \{F\in \mathscr{F}~\vert~ F\subset \partial W, \textrm{where } W \textrm{ is a component of } \Omega \setminus \cup(F'\in \mathscr{F})\}$$

and refer to the flats in this subset as \textit{boundary flats}. Let its complement be $F_{n\partial}=\mathscr{F}\setminus \mathscr{F}_\partial$, so that
$$\mathscr{F} = \mathscr{F}_\partial \sqcup \mathscr{F}_{n \partial}.$$

We will gain traction in understanding $\mathscr{F}$ by beginning with those flats bounding some open region in the complement of the set of flats, that is, we begin with $\mathscr{F}_\partial$. We will eventually show that $\mathscr{F}_{n\partial}$ is empty, but \textit{a priori} it could be that a sequence of flats accumulate like the leaves in a lamination (this is how Benoist handles this case) \cite{Benoist}. 

\subsection{Boundary flats}

The following proposition is the crucial technical result. The inspiration for the proof is Lemma 4 of Schroeder \cite{Schroeder}.

\begin{proposition}[Boundary flats have full-rank stabilizer]\label{prop: boundary full rank}
For $F$ a boundary flat, $stab(F)=\Gamma_F<\Gamma$ is virtually isomorphic to $\Z^{d-1}$
\end{proposition}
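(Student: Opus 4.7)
The plan is to reduce to showing that $\Gamma_F$ acts cocompactly on $F$. The bi-Lipschitz equivalence of Lemma \ref{lemma: bi-Lipschitz simplex Euclidean} induces an injection $\Aut(F)\hookrightarrow\Isom(\E^{d-1})$, so $\Gamma_F$ is a discrete subgroup of $\Isom(\E^{d-1})$. Theorem \ref{theorem: Bieberbach/soul} then makes $\Gamma_F$ virtually $\Z^k$ with $k\le d-1$, and $k=d-1$ exactly when the action is cocompact. Thus everything reduces to establishing that cocompactness.

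The strategy adapts Schroeder's half-space argument (Lemma 4 of \cite{Schroeder}) to the Hilbert setting. Let $W$ be the component of $\Omega\setminus\bigcup\mathscr{F}$ with $F\subset\partial W$, and let $\Gamma_W:=\{\gamma\in\Gamma:\gamma W=W\}$. Every flat is the intersection of $\Omega$ with its unique spanning hyperplane (unique by Theorem \ref{theorem: no angular faces}), so $W$ is convex as an intersection of $\Omega$ with affine half-spaces. For each $x\in F$, use the normal line $n_F(x)$ through the pseudo-dual $\hat F$ (Definition \ref{def: pseudo-dual}) to designate a $\Gamma_F\cap\Gamma_W$-equivariant point $y(x)\in W$---for instance, the Hilbert midpoint of $n_F(x)\cap\overline{W}$---which sidesteps the fact that a fixed $\delta$-collar of $F$ inside $W$ may have variable thickness near $\partial F$. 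The subgroup $\Gamma_F\cap\Gamma_W$ has index at most $2$ in $\Gamma_F$, since $\Gamma_F$ can only swap the two components of $\Omega\setminus F$, so establishing cocompactness for $\Gamma_F\cap\Gamma_W$ suffices.

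Argue by contradiction: if $\Gamma_F\curvearrowright F$ is not cocompact, pick $x_i\in F$ whose orbits $\Gamma_F x_i$ escape every compact set in $\Gamma_F\backslash F$. Cocompactness of $\Gamma$ on $\Omega$ yields $\gamma_i\in\Gamma$ with $\gamma_i y(x_i)$ in a fixed compact $K\subset\Omega$; pass to a subsequence so $\gamma_i y(x_i)\to y_\infty$. Since $\pi_{\gamma_i F}$ is distance--non-increasing (Lemma \ref{lemma: normal projection d decreasing}), $\gamma_i x_i=\pi_{\gamma_i F}(\gamma_i y(x_i))$ is also bounded in $\Omega$ and subconverges to some $x_\infty$. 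Closedness of $\mathscr{F}$ in the geometric topology gives $\gamma_i F\to F_\infty$ with $x_\infty\in F_\infty$. Because $p(\gamma_i F)=p(F)$ is constant in $M$, necessarily $F_\infty=\gamma F$ for some $\gamma\in\Gamma$; after replacing each $\gamma_i$ by $\gamma^{-1}\gamma_i$ one may assume $F_\infty=F$. If $\gamma_i F=F$ for infinitely many $i$, those $\gamma_i$ lie in $\Gamma_F$, and the boundedness of $\gamma_i x_i$ then forces the orbits $\Gamma_F x_i$ to stay in a compact subset of $\Gamma_F\backslash F$, contradicting the escape assumption and completing the proof.

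The main obstacle, where the Schroeder-style work is concentrated, is ruling out the remaining case: $\gamma_i F\ne F$ for all large $i$ but $\gamma_i F\to F$. In this case Corollary \ref{cor: flats are disj} makes the $\gamma_i F$ disjoint from $F$, and convexity together with the geometric convergence places them eventually on a single side of $F$, producing infinitely many distinct boundary flats accumulating at $F$ from one side. Lemma \ref{lemma: close flats project} then provides a canonical normal identification of each $\gamma_i F$ with $F$ for $i$ large, so the elements $\gamma_j\gamma_i^{-1}$ (with $i<j$ both large) act near $F$ by displacements vanishing in the normal direction and bounded in the tangential direction; after pre- and post-composing with appropriate elements of $\Gamma_F$ to absorb the tangential part, one should be able to force these elements to accumulate at the identity in $\PGL(d+1,\R)$, contradicting discreteness of $\Gamma$. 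I expect this accumulation-versus-discreteness step---the Hilbert analogue of Schroeder's Riemannian convexity estimate---to carry essentially all of the technical weight of the proposition.
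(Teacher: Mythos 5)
Your reduction to cocompactness of $\Gamma_F\curvearrowright F$ (via Lemma \ref{lemma: bi-Lipschitz simplex Euclidean} and Theorem \ref{theorem: Bieberbach/soul}) matches the paper's opening move, but from there the argument has two genuine gaps. First, the assertion ``because $p(\gamma_i F)=p(F)$ is constant in $M$, necessarily $F_\infty=\gamma F$ for some $\gamma\in\Gamma$'' is circular: it presupposes that the $\Gamma$-orbit of $F$ (equivalently, $p(F)$) is closed, which is exactly the kind of non-accumulation statement that follows from this proposition (Corollary \ref{cor: individual boundary flat orbits dont accumulate}), not something available as an input. At this stage the translates $\gamma_i F$ could perfectly well accumulate onto a flat outside the orbit, so your case analysis is incomplete. (A smaller issue: the ``Hilbert midpoint'' of $n_F(x)\cap\overline W$ may be undefined or at unbounded Hilbert distance from $x$ when that normal segment ends on $\partial\Omega$, which undermines the boundedness of $\gamma_i x_i$; this is fixable, e.g.\ by taking a point at bounded distance along the normal, but it is a flaw as written.)

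Second, and more seriously, the step you yourself identify as carrying ``essentially all of the technical weight'' is not proved, and the mechanism you sketch is unlikely to work. Absorbing the tangential part of $\gamma_j\gamma_i^{-1}$ by pre/post-composing with elements of $\Gamma_F$ only produces bounded elements if $\Gamma_F$ acts cocompactly on $F$ in the tangential directions --- which is precisely the statement under proof; in the contradiction hypothesis $k<d-1$ there is an unabsorbable noncompact tangential direction, so no accumulation at the identity in $\PGL(d+1,\R)$ is forced and discreteness of $\Gamma$ is not contradicted. Indeed, accumulation of distinct flats onto $F$ is only known to be impossible once $\mathrm{stab}(F)$ is infinite (Lemma \ref{lemma: stabilizer gives isolated}), so it cannot be the engine of the proof. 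The paper's argument runs through different machinery: it uses the precisely invariant region $Q=\{x\in W:\ d(x,F)<d(x,F')\ \forall F'\neq F\}$ and compactness of $M$ to show (via disjoint balls pushed off $F$ along normals and Lemma \ref{lemma: normal projection d decreasing}) that every point of $F$ far from the compact soul $\tilde K$ is uniformly close to a neighboring flat; Lemma \ref{lemma: close flats project} and Corollary \ref{cor: flats are disj} make the nearest-flat map locally constant, producing a single flat $F'$ staying within $\epsilon_\Omega$ of $F$ along a sequence tending to $\partial F$; and the contradiction then comes from the boundary-at-infinity statement, Lemma \ref{lemma: bounded distance flats don't exist} (resting on Lemmas \ref{lemma: bounded distance sequences} and \ref{lemma: boundary segments and flats}), which your sketch never engages. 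That asymptotic separation of distinct flats, not a discreteness-of-$\Gamma$ argument near $F$, is where the content of the proposition lies.
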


Recall first the definition of a \emph{precisely invariant} subset. Suppose a group $G$ acts on a space $Y$, and that $X\subset Y$. Then $X$ is precisely invariant when for each $g\in G$, either $g.X=X$ or $g.X\cap X=\emptyset$. It is true (and not difficult to show) that if $X$ is precisely invariant then the natural inclusion of $stab(X)\backslash X$ into $G\backslash Y$ is an injection.

\begin{proof}

Because $\Gamma_F$ is a discrete subgroup of $\Aut(F)$, it is also a discrete subgroup of $\Isom(\E^{d-1})$ (recall Lemma \ref{lemma: bi-Lipschitz simplex Euclidean}). Thus Theorem \ref{theorem: Bieberbach/soul} states that the quotient $\Gamma_F \backslash F$ is topologically $K\times \R^{(d-1-k)}$ where $K$ is a compact manifold of dimension $k$. Furthermore, we have that $\Gamma_F$ is virtually free abelian of rank $k$. Suppose that $k<d-1$, that is, suppose $\Gamma_F\backslash F$ is not compact \cite{Cheeger-Gromoll}. Let us abuse notation and identify $K$ with $K\times \{0\}$.


Let $W$ be a component of $\Omega \setminus \cup(F'\in\mathscr{F})$ so that $F$ is in $\overline{W}$, and let $\epsilon=\epsilon_\Omega$ from Lemma \ref{lemma: close flats project}.

Take $\tilde{K}\subset F$ to be a connected cover of $K$, and let $N_r(\tilde{K})$ be the metric $r$-neighborhood in $F$ of $\tilde{K}$. We will show the following claim, which we will refer to as claim $(*)$: `there is some $R>0$ so that for all $x \in F\setminus N_R(\tilde{K})$ there exists $F_x\in \mathscr{F}\setminus \{F\}$ so that $F_x$ is in $\overline{W}$ and so that $d(x,F_x) < \epsilon/2$.' Less technically claim $(*)$ is that points in $F$ succificiently far from $\tilde{K}$ are at uniformly bounded distance from other flats in $\overline{W}$. 

To see that $(*)$ is true, consider the subset 
$$Q=\{x\in W ~\vert~ d(x,F)<d(x,F') ~\textrm{for all}~ F'\in\mathscr{F}\setminus \{F\}\}.$$
The set $Q$ is precisely invariant with respect to the action of $\Gamma$ ($\Gamma$ acts by isometries and no point can be closest to two different flats), and its stabilizer is $\Gamma_F$.

If the claim $(*)$ were not true, then there is a sequence of points $\{x_i\}_{i=1}^\infty$ in $F$ whose images $p(x_i) \in stab(F)\backslash F$ leave every compact subset, but so that each $x_i$ is at distance at least $\epsilon/2$ from every other flat bounding $W$. We may assume that for all $i > j$, $x_i$ is at least distance $\epsilon/2$ from the orbit $\Gamma_F.x_j$ in $F$. For each $i$, let $y_i$ be the point on $n_F(x_i)$ which is in the direction of $W$ and which is distance $\epsilon/4$ from $x_i$, where this distance is measured along $n_F(x_i)$. For each $i$, let $B_i$ denote the radius $\epsilon/8$ ball centered at $y_i$.

The ball $B_i$ does not overlap with $\Gamma_F.B_j$ for $j<i$, and since we assumed $(*)$ to be false, infinitely many of the $B_i$ are contained in $Q$. The fact that the balls are disjoint can be seen by considering the projection onto $F$ by the normal line, which is a distance non-increasing by Lemma \ref{lemma: normal projection d decreasing}. Figure \ref{figure: key proposition} shows  the construction in dimension $3$.

\begin{figure}[h]
\centering
\includegraphics[width=.5 \textwidth]{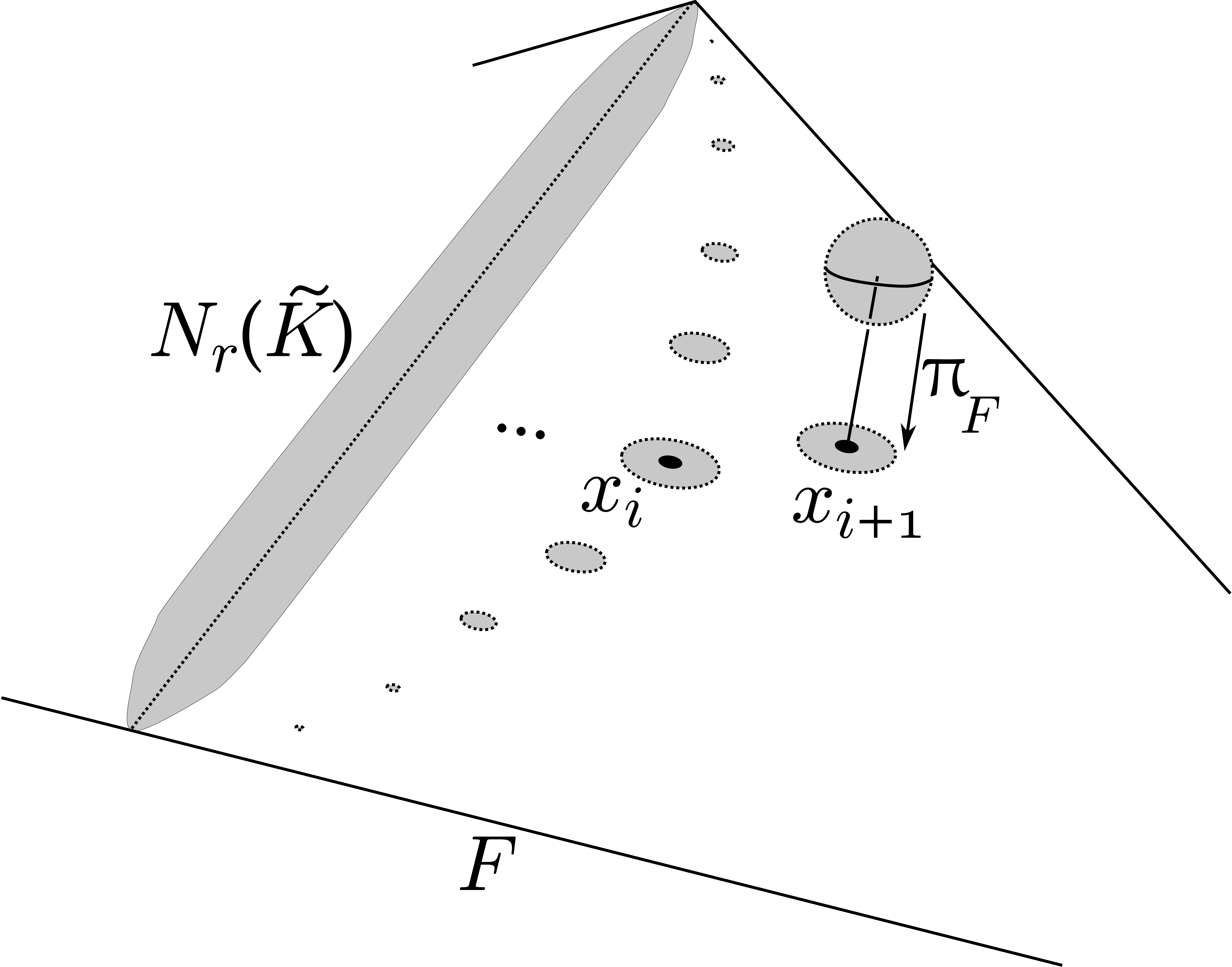}
\caption{The construction in Proposition \ref{prop: boundary full rank} when $d=3$ and $k=1$. A neighborhood of $\tilde{K}$, $\pi_F(\Gamma_F.B_i)$, $B_{i+1}$ and $\pi_F(B_{i+1})$ are shaded.}\label{figure: key proposition}
\end{figure}

Due to $Q$ being precisely invariant, $\Gamma_F \backslash Q$ embeds in $M$, which is compact. Hence, each of the $\epsilon/8$-balls $p(B_i)$ embed in $M$. However, consider the sequence $p(y_i)$ in $M$. This sequence subconverges to $q\in M$ by compactness, so there are pairs of points in this sequence which are arbitrarily close. Hence, the $\epsilon/8$ balls of these pairs must intersect, giving a contradiction with the precise invariance of $Q$. Thus claim $(*)$ holds.

Let $A=F \setminus \tilde{N}_R(K)$. Due to Lemma \ref{lemma: close flats project} and claim $(*)$, there is a well-defined map $m: A\rightarrow \mathscr{F}$ taking a point $x$ to the unique flat $m(x)\neq F$ which is in the closure of $W$ and intersects the line $n_F(x)$. This map is locally constant. This may be seen by combining Proposition \ref{cor: flats are disj} with the intermediate value theorem for continuous function. Hence, $m$ is constant on connected components of $A$.



Consider a sequence $(x_i)_{i=1}^\infty$ in a component of $A$ which converges to some $z\in\partial F \setminus \partial \tilde{K}$ and so that the image of this sequence leaves every compact set in $stab(F)\backslash F$. There is then a sequence of points $y_i\in F'=m(x_i)$ which have $d(x_i,y_i)<\epsilon_\Omega$. Hence $y_i$ converges to some $w\in \partial F'$. This directly contradicts Lemma \ref{lemma: bounded distance flats don't exist}, that bounded distance flats do not exist, and we cannot have that $k<d-1$. 

Thus, $k=dim(K)=d-1$. 
\end{proof}

The proposition then gives the following corollary immediately.

\begin{corollary}\label{cor: individual boundary flat orbits dont accumulate}
Let $F$ be a boundary flat. Then $stab(F)\backslash F$ is compact (and hence virtually a $(d-1)$-torus) and the $\Gamma$-orbit of $F$ is a discrete subset of $\mathscr{F}$.
\end{corollary}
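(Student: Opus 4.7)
The first assertion is essentially a packaging of what is already in hand. By Proposition \ref{prop: boundary full rank}, $\Gamma_F$ is virtually $\Z^{d-1}$; through the bi-Lipschitz equivalence of Lemma \ref{lemma: bi-Lipschitz simplex Euclidean}, it embeds as a discrete subgroup of $\Isom(\E^{d-1})$ of maximal rank $d-1$. Theorem \ref{theorem: Bieberbach/soul} then forces the $\Gamma_F$-action on $F$ to be cocompact, so $\Gamma_F\backslash F$ is compact; and a finite-index $\Z^{d-1} < \Gamma_F$ acts as a cocompact translation lattice on $F \cong \E^{d-1}$, yielding a $(d-1)$-torus as a finite cover of $\Gamma_F\backslash F$. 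A consequence I will use below is that $p(F)\subset M$, being the continuous image of the compact space $\Gamma_F\backslash F$, is compact and therefore closed.

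For orbit discreteness, the plan is to argue by contradiction: suppose the flats $\gamma_i F$ are pairwise distinct and converge geometrically to some $F_\infty \in \mathscr{F}$. First I would reduce to the case $F_\infty = F$. Pick any $x_\infty \in F_\infty$; by the definition of geometric convergence there are $x_i \in \gamma_i F$ with $x_i \to x_\infty$. Then $p(x_i) \in p(F)$ and $p(x_i) \to p(x_\infty)$, so closedness of $p(F)$ forces $p(x_\infty) \in p(F)$. Hence $x_\infty \in \gamma F$ for some $\gamma \in \Gamma$, and Corollary \ref{cor: flats are disj} (closures of distinct flats are disjoint) upgrades this to $F_\infty = \gamma F$. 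Replacing $\gamma_i$ with $\gamma^{-1}\gamma_i$ I may assume $F_\infty = F$.

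To extract a contradiction, I would exploit cocompactness of $\Gamma_F$ on $F$ to funnel the sequence into a compact set. Fix a compact fundamental domain $D_F \subset F$ for $\Gamma_F$ and choose $\delta_i \in \Gamma_F$ with $z_i := \delta_i\gamma_i^{-1}x_i \in D_F$. Set $\gamma_i' = \gamma_i\delta_i^{-1}$; since $\delta_i^{-1} F = F$ we still have $\gamma_i' F = \gamma_i F$, so the $\gamma_i' F$ are pairwise distinct and in particular the $\gamma_i'$ are infinitely many distinct elements of $\Gamma$. But $\gamma_i' z_i = x_i \to x_\infty$, so for any fixed compact neighborhood $U$ of $x_\infty$ we have $\gamma_i' D_F \cap U \neq \emptyset$ for all large $i$, contradicting proper discontinuity of $\Gamma$ on $\Omega$.

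The conceptual hurdle is the reduction $F_\infty = F$: without it, we have no compact target on which to play proper discontinuity against cocompactness of $\Gamma_F$. It is precisely the compactness of $p(F)$ (the first half of the corollary) together with the disjointness of distinct flats (Corollary \ref{cor: flats are disj}) that makes this reduction available; once those are in hand, the contradiction is a straightforward combination of a fundamental-domain trick with proper discontinuity.
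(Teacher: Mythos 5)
Your proof is correct and takes essentially the same route as the paper's: compactness of $stab(F)\backslash F$ comes from Proposition \ref{prop: boundary full rank} combined with Theorem \ref{theorem: Bieberbach/soul}, and discreteness of the orbit is then forced by the discreteness (proper discontinuity) of $\Gamma$ acting on $\Omega$. Your reduction to $F_\infty=F$ via closedness of $p(F)$ and Corollary \ref{cor: flats are disj}, together with the fundamental-domain trick, simply makes explicit the details the paper's two-sentence argument leaves implicit.
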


\begin{proof}
Since $stab(F)$ is virtually $\Z^{d-1}$, we have that $stab(F)\backslash F$ is compact, and thus virtually a $(d-1)$-torus. Since $stab(F)\backslash F$ contains its limit points, $\Gamma.F$ does as well. The discreteness of $\Gamma$ as a group then gives that $\Gamma.F$ is a discrete subset of $\mathscr{F}$.
\end{proof}

The following lemma is a small extension of Proposition 3.2 (part b) in Benoist \cite{Benoist}. 

\begin{lemma}\label{lemma: stabilizer gives isolated}
A flat $F$ with infinite $\Gamma$-stabilizer is isolated in  $\mathscr{F}$.
\end{lemma}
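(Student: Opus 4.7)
The plan is to argue by contradiction: suppose there is a sequence of pairwise distinct $F_n \in \mathscr{F}\setminus\{F\}$ with $F_n \to F$ geometrically. The strategy is to exploit the infinite stabilizer $\Gamma_F$ dynamically. Via Lemma \ref{lemma: bi-Lipschitz simplex Euclidean}, $\Aut(F)$ embeds bi-Lipschitz into $\Isom(\E^{d-1})$, so Theorem \ref{theorem: Bieberbach/soul} produces $\gamma \in \Gamma_F$ of infinite order acting on $F$ as a nontrivial translation. Because $\gamma$ permutes the $(d-2)$-faces of $F$, each admitting a unique supporting hyperplane by Theorem \ref{theorem: no angular faces}, $\gamma$ fixes their common intersection $\hat F$. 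After replacing $\gamma$ with a suitable power, it fixes each vertex of $F$ pointwise, so $\gamma$ is diagonal in the projective basis consisting of the $d$ vertices of $F$ together with $\hat F$. Choosing $\gamma$ generically inside the translation lattice of $\Gamma_F$ (combining several generators if necessary) arranges that all $d+1$ eigenvalues are pairwise distinct, so each eigenspace is one-dimensional and the $d+1$ fixed points are projectively isolated.

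The next step is to dichotomize on whether $\gamma$ preserves the $F_n$. If $\gamma F_n = F_n$ for infinitely many $n$, then after a further power $\gamma$ fixes each vertex of $F_n$ pointwise, so the vertices lie in the isolated eigenspaces of $\gamma$. Since $\hat F \notin \partial\Omega$, the $d$ vertices of $F_n$ must coincide with the $d$ vertices of $F$, giving $F_n = F$ and contradicting $F_n \neq F$. If instead $\gamma F_n \neq F_n$ for infinitely many $n$, the same eigenvalue argument applied to every power of $\gamma$ shows $\{\gamma^j F_n\}_{j \in \Z}$ is an infinite family of pairwise distinct flats, all lying in $\Gamma \cdot F_n$. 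Since $\gamma$ fixes $\hat F$, the normal projection $\pi_F$ of Lemma \ref{lemma: normal projection d decreasing} is $\Gamma_F$-equivariant, and I would combine this with Benzecri cocompactness (Theorem \ref{theorem: Benzecri ccpt}) to reduce the orbit modulo $\langle\gamma\rangle$ onto a bounded region of $F$. This produces infinitely many distinct $\Gamma$-translates of $F_n$ clustering near a compact subset of $\Omega$, contradicting the proper discontinuity of $\Gamma$.

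The main obstacle is this second case: the bare $\langle\gamma\rangle$-orbit merely slides along $F$ rather than clustering, so one must pass to a $\langle\gamma\rangle$-quotient (or a finite-index subgroup of $\Gamma_F$) to extract genuine local accumulation of distinct flats. A subsidiary technical point is arranging that $\gamma$ has pairwise distinct eigenvalues when the translation rank of $\Gamma_F$ is small; this may require combining multiple independent translations of $\Gamma_F$ or refining the eigenvector analysis directly with a continuity argument using the convergence of vertices of $F_n$.
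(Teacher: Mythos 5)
Your dichotomy isolates the right difficulty but does not resolve it, and the unresolved branch is the essential one. In your second case the family $\{\gamma^j F_n\}_{j\in\Z}$, even granted that it consists of pairwise distinct flats, does not accumulate anywhere in $\Omega$: for a fixed $x_0\in F$ one has $d(x_0,\gamma^j F_n)=d(\gamma^{-j}x_0,F_n)$, and since $\gamma^{-j}x_0$ leaves every compact subset of $F$, this distance tends to infinity (compare Lemma \ref{lemma: bounded distance flats don't exist}); so no conflict with proper discontinuity arises, exactly the ``sliding'' you acknowledge. Passing to a $\langle\gamma\rangle$-quotient does not help (that quotient is non-compact and nothing forces recurrence there), and Benzecri cocompactness (Theorem \ref{theorem: Benzecri ccpt}) is about the $\PGL(d+1,\R)$-action on pointed convex domains and provides no such reduction. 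The missing idea is to use the accumulation hypothesis quantitatively rather than dynamically: if $F_n\to F$, then the shadows $\pi_F(\overline{F}_n)$ eventually contain arbitrarily large metric balls in $F$, in particular a ball of radius $2|\gamma|$ centered at a point nearly realizing the translation length of $\gamma$. Then $\pi_F(\overline{F}_n)\cap\gamma.\pi_F(\overline{F}_n)\neq\emptyset$ by $\Gamma_F$-equivariance of $\pi_F$ (Lemma \ref{lemma: normal projection d decreasing}), and because $\pi_F$ restricted to each component of $\partial\Omega\setminus\partial F$ is injective, this forces $\overline{F}_n\cap\gamma.\overline{F}_n\neq\emptyset$, contradicting Corollary \ref{cor: flats are disj}. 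This single argument, which is the paper's proof, replaces both branches of your dichotomy and never needs eigenvalue information about $\gamma$.

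The ``subsidiary technical point'' is also a genuine gap rather than a refinement to be supplied later. The lemma assumes only that $\mathrm{stab}(F)$ is infinite, so by Theorem \ref{theorem: Bieberbach/soul} it may be virtually $\Z$ of rank one; in that case there are no independent translations to combine, and the single primitive translation, while indeed diagonalizable after passing to a power fixing the vertices and $\hat F$, may perfectly well have repeated eigenvalues, so its fixed-point set in $\RP^d$ contains positive-dimensional pieces. Your Case A (forcing the vertices of $F_n$ to coincide with those of $F$) and the pairwise-distinctness of the $\gamma^jF_n$ in Case B both rest on the distinct-eigenvalue claim, which is therefore unavailable in exactly the generality the lemma requires. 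All the argument really needs from the infinite stabilizer is a single fixed-point-free element of some definite translation length, which is what the paper extracts and then exploits via the projection $\pi_F$.
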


\begin{proof}
Note that the projection $\pi_F$ defined in Lemma \ref{lemma: normal projection d decreasing} is $\Gamma_F$-equivariant. On $\partial \Omega$, this map is 2-to-1, and is bijective on each component of $\partial \Omega \setminus \partial F$. Suppose there were a sequence $F_i$ accumulating to $F$, and assume that all $F_i$ are in one component of $\Omega\setminus F$. Choose any $\gamma\in stab(F)$ which is not torsion, is orientation preserving, and is not the identity (every infinite discrete subgroup of $\Aut(\Delta^{d-1})$ has such elements, because every infinite discrete subgroup of $\Isom(\E^{d-1})$ does). The element $\gamma$ has some translation length $|\gamma|$ on $F$. Since $F_i$ accumulate to $F$, we must have that $\pi_F(\partial F_i)$ are converging to $\partial F$. So choose $F_N$ so that $\pi_F(F_N)$ contains a ball of radius at least $2|\gamma|$ in $F$. We must have then that $\pi_F(F_N)\cap \gamma.\pi_F(F_N)\neq \emptyset$, and since $\pi_F$ is $\gamma$-equivariant and bijective on components of $\partial \Omega \setminus \partial F$, we have that $\overline{F}_N$ and $\gamma.\overline{F}_N$ are not disjoint. This contradicts Corollary \ref{cor: flats are disj}.
\end{proof}

This gives the following Corollary to Proposition \ref{prop: boundary full rank}.

\begin{corollary}\label{cor: boundary flats isolated}
Boundary flats are isolated points in $\mathscr{F}$. That is, no sequence of flats in $\mathscr{F}$ accumulates to a flat in $\mathscr{F}_\partial$.
\end{corollary}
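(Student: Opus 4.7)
The plan is to combine the two results immediately preceding the corollary. By Proposition \ref{prop: boundary full rank}, any boundary flat $F \in \mathscr{F}_\partial$ has stabilizer $\Gamma_F$ virtually isomorphic to $\Z^{d-1}$. Since $d \geq 3$ by the persistent notation of the paper, this stabilizer contains a finite-index subgroup isomorphic to $\Z^{d-1}$, and in particular is infinite. Lemma \ref{lemma: stabilizer gives isolated} then directly concludes the argument: a flat with infinite $\Gamma$-stabilizer is isolated in $\mathscr{F}$, hence no sequence of flats can accumulate to $F$.

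There is essentially no obstacle here, since all the work has already been done. The only minor sanity check is that the hypothesis of Lemma \ref{lemma: stabilizer gives isolated} (infiniteness of the stabilizer) is genuinely weaker than the conclusion of Proposition \ref{prop: boundary full rank} (virtual $\Z^{d-1}$ stabilizer), so the implication is immediate. I would write the proof as essentially a one-line invocation: "By Proposition \ref{prop: boundary full rank}, $\mathrm{stab}(F)$ is infinite for every $F \in \mathscr{F}_\partial$, and Lemma \ref{lemma: stabilizer gives isolated} then gives that $F$ is isolated in $\mathscr{F}$." No new geometry or algebra is required, which is why the author's presentation markets this as a corollary rather than a proposition.
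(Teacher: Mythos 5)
Your proposal is correct and matches the paper's own proof exactly: the corollary is the immediate combination of Proposition \ref{prop: boundary full rank} (boundary flats have infinite, indeed virtually $\Z^{d-1}$, stabilizer) with Lemma \ref{lemma: stabilizer gives isolated}. Nothing further is needed.
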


\begin{proof}
This is the application of Lemma \ref{lemma: stabilizer gives isolated} to boundary flats, which are shown to have infinite stabilizer in Proposition \ref{prop: boundary full rank}.
\end{proof}

So far, we have determined the following topological data: that the $\Gamma$-orbit of a single boundary flat is discrete, and that no boundary flat is an accumulation point in $\mathscr{F}$. The next lemma shows that the entire set of boundary flats has no accumulation points.

\begin{proposition}\label{prop: boundary flats don't accumulate}
$\mathscr{F}_\partial$ is closed and discrete as a subset of $\mathscr{F}$.
\end{proposition}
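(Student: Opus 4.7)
\emph{Discreteness} of $\mathscr{F}_\partial$ as a subset of itself is immediate from Corollary \ref{cor: boundary flats isolated}: since no sequence in the ambient $\mathscr{F}$ accumulates at a boundary flat, certainly no sequence of boundary flats accumulates in $\mathscr{F}_\partial$.

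For \emph{closedness} in $\mathscr{F}$ I would argue by contradiction. Suppose $(F_i) \subset \mathscr{F}_\partial$ is a sequence of distinct boundary flats with $F_i \to F \in \mathscr{F}$; the discreteness just established forces $F \notin \mathscr{F}_\partial$. By Corollary \ref{cor: individual boundary flat orbits dont accumulate}, the $\Gamma$-orbit of any single boundary flat is already discrete in $\mathscr{F}$, so after passing to a subsequence I may assume the $F_i$ lie in pairwise distinct $\Gamma$-orbits. Their images $T_i := p(F_i) \subset M$ are then pairwise distinct, pairwise disjoint (by Corollary \ref{cor: flats are disj}), compact codimension-$1$ virtual tori, each two-sided and $\pi_1$-injective in $M$ (two-sidedness from that of the simplex $F_i$, $\pi_1$-injectivity since $\mathrm{stab}(F_i) \hookrightarrow \Gamma = \pi_1(M)$).

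The homological argument uses that $H_{d-1}(M;\Z/2)$ is a finite-dimensional $\Z/2$-vector space, so the sequence of classes $[T_1 \sqcup \cdots \sqcup T_n] \in H_{d-1}(M;\Z/2)$ must repeat: for some $i < j$, the disjoint union $T_{i+1} \sqcup \cdots \sqcup T_j$ is null-homologous mod $2$ and therefore cobounds a compact codimension-$0$ region $R \subset M$. The geometric observation is that a connected component of the preimage $\tilde{R} \subset \Omega$ is a ``slab'' bounded (at least in part) by a pair of flats $\tilde{T}_a, \tilde{T}_b$ lifting two of the tori in $\partial R$, and their common stabilizer acts cocompactly on $\tilde{R}$; hence one can find sequences $x_k \in \tilde{T}_a$ escaping to $\partial \tilde{T}_a$ together with partners $y_k \in \tilde{T}_b$ escaping to $\partial \tilde{T}_b$ with $d_\Omega(x_k, y_k)$ uniformly bounded. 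This directly contradicts Lemma \ref{lemma: bounded distance flats don't exist}.

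The main obstacle I anticipate is the homology-to-geometry step: ensuring that the cobounding region $R$ really lifts to a slab bounded by exactly two flats at uniformly bounded Hilbert distance, rather than to a more complicated topological configuration whose boundary is split among many tori. One must use the two-sided nature of each $T_i$, the asphericity of $M$, and the cocompactness of each $\mathrm{stab}(T_i)$ to reduce the argument to a two-flat-bounded cylinder before invoking Lemma \ref{lemma: bounded distance flats don't exist}.
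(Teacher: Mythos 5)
Your reduction to infinitely many pairwise distinct $\Gamma$-orbits (via Corollaries \ref{cor: boundary flats isolated} and \ref{cor: individual boundary flat orbits dont accumulate}) matches the paper, but the homology-to-geometry step you flag as an ``obstacle'' is in fact a fatal gap: the geometric conclusion you want from the compact cobounding region $R$ is false in general. If $R\subset M$ is compact with $\partial R$ a union of the tori, then a component $\tilde R$ of $p^{-1}(R)$ does carry a cocompact action of $\mathrm{stab}(\tilde R)$ (note: of the stabilizer of $\tilde R$, not of the ``common stabilizer'' of two flats, which may be small), and every point of $\tilde R$ lies within uniformly bounded Hilbert distance of $\partial\tilde R$; but this only puts a point of a boundary flat close to \emph{some} flat in $\partial\tilde R$ --- possibly the one it already lies on --- and produces no pair of \emph{distinct} flats with escaping sequences at bounded distance. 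Worse, compact regions bounded by the tori genuinely exist whenever $\mathscr{F}\neq\emptyset$: in the known examples (Benoist's in dimension $3$, and those from Coxeter polytopes up to dimension $7$) the closure of a complementary component is exactly such an $R$, and the boundary flats of its lift are precisely \emph{not} at bounded distance, consistently with Lemma \ref{lemma: bounded distance flats don't exist}. So if your step worked, it would show no compact region bounded by tori can exist, i.e.\ $\mathscr{F}=\emptyset$ always, which is false. The root cause is that after extracting the mod-$2$ null-homologous subcollection you have discarded the hypothesis that the $F_i$ accumulate to $F$; no amount of two-sidedness (which, incidentally, does not simply descend from $F_i$ to $p(F_i)$), asphericity, or cocompactness recovers the contradiction without it.

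The paper uses homology for a different, weaker purpose: Alexander duality together with the long exact sequence of the pair $(M,M\setminus K)$, $K=\cup_{i\leq\alpha}p(F_i)$, shows the number of components of $M\setminus K$ grows with $\alpha$, so the accumulating flats may be assumed to bound pairwise distinct complementary components $W_i$. The accumulation then enters geometrically: the nearest-point regions $Q_i=\{x\in W_i~\vert~ d(x,F_i)<d(x,F')\ \forall F'\neq F_i\}$ are precisely invariant with disjoint images in the compact $M$, so their inradii tend to $0$; hence for large $i$ every point of $F_i$ is within $\epsilon_\Omega$ of some other flat, and the local constancy of the normal projection (as in Proposition \ref{prop: boundary full rank}, using Lemmas \ref{lemma: close flats project} and \ref{lemma: normal projection d decreasing}) makes that other flat a single flat $F'$, uniformly close to all of $F_i$ --- and this is what contradicts Lemma \ref{lemma: bounded distance flats don't exist}. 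To salvage your outline you would have to feed the accumulation of the $F_i$ back into the choice and geometry of $R$ in some such quantitative way; as written, the step fails.
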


\begin{proof}

Suppose that $(F_i)_{i=1}^\infty$ was a sequence of boundary flats accumulating to $F$. The flat $F$ is necessarily in $\mathscr{F}_{n\partial}$ by Corollary \ref{cor: boundary flats isolated}. Choosing an evenly-covered neighborhood of some point in $F$, we observe that this sequence must cover an infinite collection of codimension one submanifolds in the quotient $M$. Otherwise, there would be an infinite subsequence of flats all in the same $\Gamma$-orbit, which is impossible as the $\Gamma$-orbit of a single boundary flat does not accumulate (Corollary \ref{cor: individual boundary flat orbits dont accumulate}). 

So in the quotient $M=\Gamma\backslash \Omega$ there is an infinite collection $\{p(F_i)\}_{i=1}^\infty$ of compact embedded disjoint codimension one submanifolds. 

We use a homological argument to show that the complement of this collection has infinitely many components. Alexander duality states that for $K$ compact, $H_q(M,M\setminus K)\cong H^{n-q}(K)$ (see Spanier, 6.2.1 \cite{Spanier}). Let $K$ be a union of $\alpha$ of the compact codimension-$1$ submanifolds in $M$ ($K=\cup\{p(F_i)\}_{i=1}^\alpha$), and consider the following, the end of the long exact sequence of the pair $(M,M\setminus K)$.

\begin{alignat*}{2}
        \cdots &\rightarrow H_{1}(M\setminus K)\rightarrow H_{1}(M) &&\rightarrow H_{1}(M,M\setminus K)\cong H_0(K)=\Z^\alpha\rightarrow \\
        &\rightarrow H_{0}(M\setminus K)\rightarrow H_{0}(M) &&\rightarrow H_{0}(M,M\setminus K)\rightarrow 0.
\end{alignat*} 

Apply Poincar\'{e} duality and Alexander duality to see that the final term in the first row is $\Z^\alpha$. The middle term in the first row ($H_{1}(M)$) is finitely generated, and $H_0(M)=\Z$. Thus, $H_{0}(M\setminus K)$ must have rank tending to infinity as the number of components of $K$ tends to infinity. This rank is exactly the number of connected components of $M\setminus K$.

We can assume as a result of this homological argument that $F_i$ accumulating to $F$ have images bounding different components in $M\setminus p(\cup(F\in\mathscr{F}))$, and call the regions covering these accumulating components $W_i\subset \Omega$. 
 
Let $Q_i=\{x\in W_i ~\vert~ d(x,F_i)<d(x,F'), F'\in\mathscr{F}\setminus\{F_i\}\}$ for each $i$, and observe that $Q_i$ is precisely invariant with stabilizer $stab(F_i)$. Suppose we take for each $i$ a metric ball $B_i\subset Q_i$ centered at some $y_i\in Q_i$. The points $p(y_i)\in M$ must subconverge, and by applying elements of $stab(F_i)$, we may assume that the points $y_i$ subconverge as well. Since $Q_i$ is precisely invariant, we must have that the radii of $B_i$ is approaching zero as $i$ tends to infinity. Hence the radius of the largest metric ball which may be embedded in $Q_i$ tends to zero, as the balls $B_i$ were arbitrary.

Thus, by the triangle inequality and the definition of $Q_i$, for large enough $i$, every  $x\in F_i$ is $\epsilon_\Omega$-close to some other flat, $q(x)$ (chosen to be the closest flat along $n_{F_i}(x)$ bounding $W_i$). As in the proof of Proposition \ref{prop: boundary full rank}, $q$ is locally constant, and hence has a constant value $F_i'$ on all of $F_i$. Fix such an $i$, and let $F'=q(x)$ for any $x\in F_i$ the flat which is guaranteed to be at uniformly bounded distance from $F_i$.

The existence of this pair of flats contradicts Lemma \ref{lemma: bounded distance flats don't exist}. Thus, no such accumulation may occur and the proposition is proved.
 
 
 
\end{proof}


We have now that the collection of boundary flats does not accumulate, and that boundary flats are isolated. 

\subsection{Foliated subsets of $\Omega$}\label{subsection: foliations}

Recall that we work under the standing assumption that $\Omega$ is irreducible, and thus that every point in $\Omega$ is in at most one flat.

Let us say that a closed subset $S$ of $\Omega$ is \emph{foliated by flats} if it is homeomorphic to $F\times I$, where $I$ is an interval in $\R$ containing more than one point, and so that each subset $F_a= F\times \{a\}$ is projectively equivalent to $\Delta^{d-1}$. 

One could imagine now that $\mathscr{F}$ has some discrete points, and that some subsets of $\Omega$ are foliated by flats. We focus on the flats at the boundaries of these foliated sections to demonstrate that this cannot occur.

\begin{proposition}\label{prop: isolated or foliated}
Either $\Omega$ is foliated by flats, or every flat in $\Omega$ is isolated.
\end{proposition}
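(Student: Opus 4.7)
My plan is to assume some flat $F\in\mathscr{F}$ is non-isolated and conclude that $\Omega$ is foliated by flats. By Corollary \ref{cor: boundary flats isolated} every boundary flat is isolated, so in fact $F\in\mathscr{F}_{n\partial}$. Throughout, set $U=\bigcup_{F'\in\mathscr{F}}F'\subset\Omega$, which is closed in $\Omega$ because $\mathscr{F}$ is closed in the geometric topology.

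The central step is to show $F\subset\mathrm{int}(U)$. Suppose for contradiction some $x\in F$ admits a sequence $y_n\to x$ with $y_n\notin U$; each $y_n$ lies in some component $V_n$ of $\Omega\setminus U$. If infinitely many $V_n$ coincide with a single component $V$, then $x\in\partial V$, so $F$ meets $\partial V$; since $\partial V\cap\Omega$ is a union of boundary flats and distinct flats are disjoint (Corollary \ref{cor: flats are disj}), $F$ itself must lie in $\partial V$, making $F$ a boundary flat and contradicting our choice. Otherwise infinitely many $V_n$ are distinct; for each $n$ pick $p_n\in\partial V_n$ with $d(y_n,p_n)\to 0$, lying on some boundary flat $B_n\subset\partial V_n$. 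Since each boundary flat borders at most two components of $\Omega\setminus U$, we may pass to a further subsequence with the $B_n$ pairwise distinct. Because $p_n\to x$, any geometric limit of $B_n$ contains $x$ and hence equals $F$ by uniqueness of the flat through $x$. Thus $B_n\to F$ is a sequence of distinct boundary flats accumulating in $\mathscr{F}$, contradicting Proposition \ref{prop: boundary flats don't accumulate}. Hence $F\subset\mathrm{int}(U)$.

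Applying the same argument to any non-isolated flat gives $\bigcup\mathscr{F}_{n\partial}\subset\mathrm{int}(U)$; conversely any boundary flat has complement points arbitrarily close, so $\mathrm{int}(U)=\bigcup\mathscr{F}_{n\partial}$. This set is open in $\Omega$ and non-empty; I claim it is also closed. If $x_n\in\mathrm{int}(U)$ with $x_n\to x$, then $x_n$ lies on a non-isolated flat $F_n$. Passing to a subsequence, $F_n\to F^*$ in the geometric topology with $x\in F^*$; since either $F_n\neq F^*$ infinitely often (so $F^*$ is non-isolated by definition) or $F_n=F^*$ eventually (so $F^*$ is non-isolated trivially), we conclude $F^*\in\mathscr{F}_{n\partial}$ and $x\in F^*\subset\mathrm{int}(U)$. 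By connectedness of $\Omega$, $\mathrm{int}(U)=\Omega$, and $\Omega$ is a disjoint union of non-isolated flats.

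Finally, the neighborhood argument of the central step combined with the normal projection $\pi_{F'}$ for each $F'\in\mathscr{F}_{n\partial}$ produces a local product trivialization, so the quotient $Q=\Omega/\!\sim$ identifying points on the same flat is a connected $1$-manifold. Since $\Omega$ is simply connected, $Q$ is homeomorphic to an open interval $I$, and assembling the local trivializations yields a homeomorphism $\Omega\cong\Delta^{d-1}\times I$ realizing the foliation. I expect the main obstacle to be the case analysis above, where controlling how components of $\Omega\setminus U$ near $F$ produce accumulating boundary flats requires careful use of Proposition \ref{prop: boundary flats don't accumulate}.
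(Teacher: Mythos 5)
Your argument is correct in substance but is organized quite differently from the paper's. The paper starts from a maximal subset $S\cong F\times I$ foliated by flats, uses Corollary \ref{cor: boundary flats isolated} and Proposition \ref{prop: boundary flats don't accumulate} to rule out a boundary leaf (so $I=\R$), and then runs a segment/connectedness argument to force $S=\Omega$; you instead work with $U=\bigcup_{F'\in\mathscr{F}}F'$ and show that $\bigcup\mathscr{F}_{n\partial}=\mathrm{int}(U)$ is open and closed in $\Omega$, which makes explicit the passage from ``some flat is non-isolated'' to ``every point of $\Omega$ lies on a flat'' --- a step the paper leaves implicit --- while deferring the actual product structure to the end. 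The ingredients are the same (Corollary \ref{cor: flats are disj}, Corollary \ref{cor: boundary flats isolated}, Proposition \ref{prop: boundary flats don't accumulate}, closedness of $\mathscr{F}$), and the trade-off is that the paper keeps the $F\times I$ structure in hand throughout, whereas you must recover it at the last moment. Two places deserve more than assertion. First, the claim that $\partial V\cap\Omega$ is a union of boundary flats: this is true, but you should note that each flat $F'$ satisfies $\Omega\cap\eta_{F'}=F'$ and hence separates $\Omega$, so each component $V$ of $\Omega\setminus U$ is convex, and then a segment from $y\in V$ to any $z\in F'$ (for $F'$ meeting $\partial V$) cannot cross a second flat without contradicting disjointness of closures; this gives $F'\subset\partial V$. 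Relatedly, your ``each boundary flat borders at most two components'' is both unproved and unnecessary: if the $B_n$ are eventually a fixed boundary flat $B$ then $x\in\overline{B}\cap\overline{F}$ contradicts Corollary \ref{cor: flats are disj}, and otherwise infinitely many distinct $B_n$ accumulate to $F$, contradicting Proposition \ref{prop: boundary flats don't accumulate}. Second, the final paragraph (local triviality of the partition, Hausdorffness of the leaf space $Q$, and assembling a global homeomorphism $\Omega\cong\Delta^{d-1}\times I$) is only sketched; since the proposition's conclusion is precisely this product structure, one would want at least a transversal-plus-separation argument (each flat separates $\Omega$, giving a linear order on leaves) spelled out, though to be fair this is at about the same level of informality as the paper's own assertion that a neighborhood of $F_0$ consisting of flats ``is foliated by flats.''
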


\begin{proof}
Suppose that $\Omega$ has a subset $S$ which is foliated by flats. Then $S$ decomposes as $F\times I$, where $I$ is an interval. Let us assume $S$ is maximal under inclusion among foliated subsets. 

Since $\mathscr{F}$ is closed in $\Omega$, the factor $I$ must be homeomorphic to one of

\begin{enumerate}[label=(\roman*)]
    \item $[0,1]$
    \item $[0,\infty)$
    \item $(-\infty,\infty)$.
\end{enumerate}
In cases (i) and (ii), $F_0$ cannot be in $\mathscr{F}_\partial$ because it is an accumulation point of a sequence of flats in $S$ (this is the contrapositive of Corollary \ref{cor: boundary flats isolated}). Thus, $F_0$ has flats accumulated to it on both sides, or it would contradict Corollary \ref{cor: boundary flats isolated}. However, it cannot be the case that every neighborhood in $\Omega$ of $F_0$ contains points in $\Omega \setminus \cup (F\in\mathscr{F})$, for otherwise the flats bounding these regions would be accumulating to $F_0$, contradicting Proposition \ref{prop: boundary flats don't accumulate}. Thus a neighborhood of $F_0$ is foliated by flats, and $S$ was not maximal. 

So every maximal foliated subset of $\Omega$ is $F\times (-\infty,\infty)$. There may be only one such maximal subset, $S=\Omega$. If there are two such regions, $S$ and $S'$ choose a point in each and consider the segment $\sigma$ between them. The sets $S\cap \sigma$ and $S'\cap \sigma$ must be closed, connected, disjoint, and without boundary points on the interior of $\sigma$. No such pair of subsets of an interval exists, so $S$ and $S'$ cannot both exist. Similar reasoning shows that there may be no point in $\Omega$ outside of $S$.
\end{proof}

There is a theorem of Vey from 1970 which rules out the case where $\Omega$ is completely foliated by flats \cite{Vey}. Recall that a linear group $\Gamma$ is said to act \textit{strongly irreducibly} if every finite index subgroup $\Gamma'<\Gamma$ acts irreducibly.

\begin{theorem}[Vey irreducibility]\cite{Vey}\label{theorem: Vey irreducibility}
If $\Omega$ is irreducible, then $\Gamma$ acts strongly irreducibly on $\R^{d+1}$.
\end{theorem}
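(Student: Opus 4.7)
The plan is to prove the contrapositive: assume that some finite-index subgroup $\Gamma_0<\Gamma$ preserves a proper nonzero subspace of $\R^{d+1}$, and derive that $C(\Omega)$ splits as a direct sum of proper convex subcones, contradicting irreducibility. After replacing $\Gamma_0$ by the intersection of its finitely many $\Gamma$-conjugates, I may assume $\Gamma_0\triangleleft\Gamma$ is normal of finite index, so that the $\Gamma$-orbit $\{W_1,\dots,W_k\}$ of the invariant subspace consists of $\Gamma_0$-invariant subspaces permuted by $\Gamma$.

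The key algebraic input is that the $\Gamma_0$-representation on $\R^{d+1}$ is completely reducible. This follows because $\Gamma$ acts cocompactly on the properly convex open cone $C(\Omega)$: standard results on automorphism groups of sharp convex cones (in the spirit of Koecher--Vinberg, and used throughout Benoist's work on divisible convex domains) imply that the Zariski closure of $\Gamma$, and hence that of the finite-index subgroup $\Gamma_0$, is reductive. I would therefore obtain a $\Gamma_0$-invariant direct-sum decomposition $\R^{d+1} = U_1 \oplus \dots \oplus U_m$ into $\Gamma_0$-irreducible pieces with $m\geq 2$, and then group these into two $\Gamma_0$-invariant complementary subspaces $V_1,V_2$ such that $V_1$ contains at least one of the $W_i$.

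The hard step is to upgrade this purely linear splitting into a splitting of the cone, namely $C(\Omega)=C_1\oplus C_2$ with $C_i\subset V_i$. Starting from $v\in C(\Omega)$ written as $v=v_1+v_2$, the idea is to use the spectral behavior of elements of $\Gamma_0$ on the irreducible summands: cocompactness of $\Gamma_0$ on $\Omega$ provides elements whose projective dynamics stretch $V_1$-components and contract $V_2$-components, and iterating them on $[v]$ yields sequences in $\Omega$ whose projective limits must remain in $\overline{\Omega}$. Proper convexity of $C(\Omega)$ then forces $[v_1]\in\overline{\Omega}$ (and symmetrically $[v_2]\in\overline{\Omega}$), after which a convexity argument promotes these boundary points into a genuine direct-sum decomposition of $C(\Omega)$. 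I expect this spectral/asymptotic dynamics argument to be the main obstacle; it is where Vey's original insight lies, and where cocompactness is indispensable. Once the cone decomposition is established it directly contradicts the hypothesis that $\Omega$ is irreducible, completing the proof.
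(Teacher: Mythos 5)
This statement is not proved in the paper at all: it is imported verbatim as a black-box citation to Vey, so there is no internal argument to compare yours against; what you have written has to stand on its own as a proof of Vey's theorem, and it does not yet do so. The first soft spot is your complete-reducibility input. Koecher--Vinberg theory concerns homogeneous (symmetric) cones and says nothing about a general divisible cone, which need not be homogeneous; and the fact that a group dividing a properly convex open cone acts semisimply on $\R^{d+1}$ (equivalently, has reductive Zariski closure) is itself a nontrivial theorem, due essentially to Vey and reproved by Benoist by exactly the kind of cone-splitting analysis you are trying to reconstruct. Quoting it as a ``standard result'' is therefore dangerously close to assuming the statement you are proving.

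The more serious gap is the step you yourself flag as the main obstacle: upgrading the $\Gamma_0$-invariant linear splitting $\R^{d+1}=V_1\oplus V_2$ to a splitting $C(\Omega)=C_1\oplus C_2$ of the cone. This is the entire content of the theorem, and your sketch leaves its two load-bearing claims unestablished. First, the existence of elements (or sequences) of $\Gamma_0$ whose spectral radius on $V_1$ strictly dominates their norm on $V_2$ is asserted from ``cocompactness'' but never produced; a priori every element could have comparable top exponents on both factors (this actually happens for many elements in the model reducible situation, e.g.\ the diagonal group of a simplex), so one needs a genuine argument that a spectral gap in each direction can be arranged, and that the resulting projective limits $\gamma_n[v]\to[v_1]$ make sense ($v_1\neq 0$, convergence of the normalized matrices, etc.). Second, even granting $[v_1],[v_2]\in\overline{\Omega}$ for all $v$, you still must show that $\overline{C(\Omega)}\cap V_i$ is a convex subcone spanning $V_i$, that its relative interior $C_i$ is nonempty, and that $C(\Omega)$ equals the direct sum $C_1\oplus C_2$ (both inclusions require an argument; ``a convexity argument promotes these boundary points'' is a placeholder, not a proof). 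As written, the proposal is an outline that defers precisely the insight the theorem encodes, so it cannot be accepted as a proof.
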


This allows us to conclude the following.

\begin{corollary}\label{corollary: no completely foliated omega}
No irreducible convex divisible set in dimension greater than $2$ is completely foliated by flats.
\end{corollary}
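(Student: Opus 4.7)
The plan is to suppose for contradiction that $\Omega$ is completely foliated by flats and to derive a proper $\Gamma$-invariant subspace of $\R^{d+1}$, contradicting Vey's strong irreducibility (Theorem \ref{theorem: Vey irreducibility}). Proposition \ref{prop: isolated or foliated} and its proof guarantee that if any foliated subset exists, the foliation must exhaust $\Omega$, so we may write $\Omega = \bigsqcup_{a \in \R} F_a$ with each leaf $F_a$ a $(d-1)$-flat projectively equivalent to $\Delta^{d-1}$, varying continuously in the parameter $a$.

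For each $a$, let $\hat{F}_a \in \RP^d \setminus \overline{\Omega}$ denote the pseudo-dual of $F_a$ from Definition \ref{def: pseudo-dual}. Because the $(d-2)$-faces of the leaves vary continuously and each has a unique supporting hyperplane (Theorem \ref{theorem: no angular faces}), the assignment $a \mapsto \hat{F}_a$ is a continuous map $\R \to \RP^d$. Moreover, any $\gamma \in \Gamma$ permutes leaves and carries the supporting structure of $F_a$ to that of its image $\gamma \cdot F_a$, so $\widehat{\gamma \cdot F_a} = \gamma \cdot \hat{F}_a$; in particular $\Gamma$ permutes the set $\{\hat{F}_a : a \in \R\}$.

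The main step, which I expect to be the principal obstacle, is to show that $a \mapsto \hat{F}_a$ is constant, so that all pseudo-duals coincide at a single point $p \in \RP^d \setminus \overline{\Omega}$. Granted this, $\Gamma$ fixes $p$, and hence preserves the $1$-dimensional subspace of $\R^{d+1}$ lying over $p$, contradicting Vey's strong irreducibility and finishing the proof. The geometric intuition for the constancy is that a foliation of $\Omega$ by simplex-shaped hyperplane slices forces the ambient hyperplanes $\eta_a$ containing the leaves to pivot around a common $(d-2)$-subspace of $\RP^d$ lying outside $\overline{\Omega}$: two disjoint leaves (Corollary \ref{cor: flats are disj}) have hyperplanes whose $(d-2)$-subspace of intersection must avoid $\overline{\Omega}$, and the simplex combinatorics of each leaf then force the $d$ pencils of supporting hyperplanes to the corresponding $(d-2)$-faces to pivot around a common apex point, namely $\hat{F}_a$. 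Making this precise would combine Lemma \ref{lemma: boundary segments and flats} (constraining how boundary segments of $\partial \Omega$ interact with $\partial F_a$) with a careful pencil-of-hyperplanes analysis in $(\RP^d)^*$ showing that the pairwise intersections $\eta_a \cap \eta_{a'}$ must stabilize, from which a cross-ratio computation pins down the constancy of $\hat{F}_a$.
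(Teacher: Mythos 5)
The central step of your argument---that $a\mapsto\hat{F}_a$ is constant---is not only left unproven (you defer it with heuristics about pencils of hyperplanes), it is in fact false, so the approach cannot be repaired along these lines. Distinct flats always have distinct pseudo-duals. To see this, recall from Lemma \ref{lemma: flats dual to flats} that each leaf $F_a$ has a dual flat $F_a^*\subset\Omega^*$ whose vertices are the supporting hyperplanes of the $(d-2)$-faces of $F_a$; by Definition \ref{def: pseudo-dual} these all pass through $\hat{F}_a$, so $F_a^*$ is contained in the hyperplane of $(\RP^d)^*$ dual to the point $\hat{F}_a$. If two leaves $F_a\neq F_b$ had $\hat{F}_a=\hat{F}_b$, then $F_a^*$ and $F_b^*$ would be codimension-$1$ flats of $\Omega^*$ lying in the same hyperplane $H$; but a properly embedded codimension-$1$ simplex contained in $H$ is open and closed in the connected convex set $H\cap\Omega^*$, hence equals it, forcing $F_a^*=F_b^*$ and therefore (taking duals of the faces, using $(K^*)^*=K$) $F_a=F_b$. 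So along a genuine foliation the pseudo-dual map is injective, never constant, and no single $\Gamma$-fixed point is produced. The auxiliary claim that the hyperplanes $\eta_a$ containing the leaves must pivot about a common $(d-2)$-plane is likewise unjustified: disjointness of leaves (Corollary \ref{cor: flats are disj}) only forces each pairwise intersection $\eta_a\cap\eta_{a'}$ to miss $\overline{\Omega}$, not to be a single fixed subspace, and nothing in Lemma \ref{lemma: boundary segments and flats} supplies such rigidity.

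The paper's proof avoids the leaves' individual duals entirely and instead extracts an invariant object from the two ends of the foliation: writing $\Omega=F\times(-\infty,\infty)$, the limits $\partial_\pm S=\lim_{t\to\pm\infty}\overline{F_t}$ are disjoint subsets of $\partial\Omega$, each contained in a hyperplane (since each $F_t$ is). Because every leaf is an accumulation point of leaves, Lemma \ref{lemma: stabilizer gives isolated} shows no leaf has non-trivial stabilizer, and the subgroup of $\Gamma$ preserving the two ends (of index at most $2$) preserves $\partial_+S$ and $\partial_-S$, hence preserves a proper linear subspace of $\R^{d+1}$. This contradicts Theorem \ref{theorem: Vey irreducibility}, and it is exactly here that \emph{strong} irreducibility is needed, since the invariant subspace is a priori only invariant under a finite-index subgroup. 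If you want to salvage your write-up, replace the constancy claim with an argument of this ``limit faces at the ends'' type; your opening reduction to a complete foliation and the equivariance $\widehat{\gamma\cdot F_a}=\gamma\cdot\hat{F}_a$ are fine but do not get you past the main obstacle.
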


\begin{proof}
If $\Omega = F\times (-\infty,\infty)$, then let $\partial_\pm S=\lim_{t\rightarrow\pm\infty} (\overline{F_t})$ respectively, where this limit is taken in the geometric sense of closed subsets in $\RP^d$. Note that $\partial_\pm S$ are necessarily disjoint, lest flats intersect on $\partial \Omega$ (Proposition \ref{cor: flats are disj}), and that each component is contained in a hyperplane (as each $F_t$ is). Furthermore, since each flat $F_t$ is an accumulation point, no flat has non-trivial stabilizer. That is to say, the action of $\Gamma$ on the factor $(-\infty,\infty)$ in this decomposition is faithful. The kernel of this action on the orientation bundle over $(-\infty,\infty)$ then stabilizes the pair of subspaces $\partial_\pm S$ in $\partial \Omega$. This kernel is index at most two in $\Gamma$ and so $\Gamma$ does not act strongly irreducibly, contradicting Theorem \ref{theorem: Vey irreducibility}.
\end{proof}

At last we have the following:

\begin{corollary}\label{cor: no non-boundary flats and finite union of tori}
The set $\mathscr{F}_{n\partial}=\emptyset$. Equivalently, $\mathscr{F}=\mathscr{F}_\partial$. Thus, $p(\mathscr{F})$ is a finite union of virtual $(d-1)$-tori in $M$.
\end{corollary}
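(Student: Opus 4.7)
The plan is to combine Proposition \ref{prop: isolated or foliated} with Corollary \ref{corollary: no completely foliated omega} to force every flat in $\mathscr{F}$ to be isolated, then argue directly that isolated flats are boundary flats, and finally use compactness of $M$ together with Proposition \ref{prop: boundary flats don't accumulate} to conclude finiteness of the orbit set.

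First I would dispose of the foliated alternative. Since $d \geq 3$ and $\Omega$ is an irreducible divisible convex set, Corollary \ref{corollary: no completely foliated omega} rules out $\Omega$ being foliated by flats, so Proposition \ref{prop: isolated or foliated} gives that every $F \in \mathscr{F}$ is isolated in $\mathscr{F}$.

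Next I would show that every isolated flat belongs to $\mathscr{F}_\partial$. Fix $F \in \mathscr{F}$ and pick an open neighborhood $U \subset \Omega$ of $F$ that meets no other flat. Because $F$ lies in a hyperplane $\eta_F$ of $\RP^d$ and $\overline{\Omega}$ sits in an affine chart, $\eta_F$ locally separates $\Omega$, so $U \setminus F$ has at most two components and each one is contained in $\Omega \setminus \bigcup_{F' \in \mathscr{F}} F'$. Each such component therefore lies in some connected component $W$ of $\Omega \setminus \bigcup_{F' \in \mathscr{F}} F'$, and because $F$ lies in the closure of $U \setminus F$, we have $F \subset \partial W$. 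Hence $F \in \mathscr{F}_\partial$, proving $\mathscr{F}_{n\partial} = \emptyset$ and $\mathscr{F} = \mathscr{F}_\partial$.

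Finally I would assemble the structural conclusion. By Corollary \ref{cor: individual boundary flat orbits dont accumulate}, for each $F \in \mathscr{F}$ the image $p(F) = \mathrm{stab}(F) \backslash F$ is a compact virtual $(d-1)$-torus in $M$, and distinct $\Gamma$-orbits in $\mathscr{F}$ project to disjoint submanifolds since the flats of $\mathscr{F}$ are pairwise disjoint (Corollary \ref{cor: flats are disj}). To see that only finitely many orbits occur, suppose we had a sequence $F_1, F_2, \ldots$ of flats in pairwise distinct $\Gamma$-orbits. Pick $x_i \in F_i$; by compactness of $M$ we may, after passing to a subsequence and translating by suitable $\gamma_i \in \Gamma$, assume $\gamma_i x_i \to x_\infty \in \Omega$. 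Since $\mathscr{F}$ is closed in the geometric topology, a subsequence of $\gamma_i F_i$ converges to some $F_\infty \in \mathscr{F} = \mathscr{F}_\partial$; but Proposition \ref{prop: boundary flats don't accumulate} says $\mathscr{F}_\partial$ is discrete, forcing $\gamma_i F_i = F_\infty$ for all large $i$, which contradicts the assumption that the $F_i$ represent distinct orbits.

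The only non-trivial step is the passage from ``isolated in $\mathscr{F}$'' to ``in $\mathscr{F}_\partial$,'' and this rests on the local separation of $\Omega$ by the hyperplane carrying $F$; the rest is bookkeeping from previously established results, which is in line with the paper's own remark that this final corollary follows ``without difficulty.''
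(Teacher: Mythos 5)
Your argument is correct and reaches the same conclusion, but it is organized differently from the paper's proof. The paper argues by contradiction: if $F\in\mathscr{F}_{n\partial}$, then (since no neighborhood of $F$ can be foliated by flats, by Proposition \ref{prop: isolated or foliated} and Corollary \ref{corollary: no completely foliated omega}) points of $F$ are limits of points lying in boundary flats, contradicting Proposition \ref{prop: boundary flats don't accumulate}; the torus statement is then attributed to Corollary \ref{cor: boundary flats isolated}. You instead prove directly that every isolated flat is a boundary flat, so Proposition \ref{prop: boundary flats don't accumulate} enters only in the finiteness count, and you make that count explicit (compactness of $M$, closedness of $\mathscr{F}$ in the geometric topology, discreteness of $\mathscr{F}_\partial$, with compactness of each $p(F)$ coming from Corollary \ref{cor: individual boundary flat orbits dont accumulate}), where the paper leaves it implicit. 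Both routes use the same inputs; yours has the advantage that the implication ``isolated $\Rightarrow$ boundary'' is isolated as a statement rather than folded into a contradiction, and your citation for the torus/compactness claim is the more apt one.

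One step in your middle paragraph needs a touch more care. The existence of an open $U\supset F$ meeting no other flat does follow from isolation of $F$, but only via the closedness of $\mathscr{F}$ in the geometric topology together with Corollary \ref{cor: flats are disj} (otherwise points of infinitely many distinct flats could accumulate onto $F$ without any single flat converging to it); this deserves a sentence. More substantively, ``$U\setminus F$ has at most two components'' is not automatic for such a $U$ (with $U=\Omega\setminus\bigcup_{F'\neq F}F'$, the set $U\setminus F$ is the full complement of the flats, which may have many components), and the definition of $\mathscr{F}_\partial$ requires all of $F$ to lie in the boundary of a \emph{single} component $W$. The fix is a routine collar argument: for each $x\in F$ choose a Hilbert ball $B(x,r_x)$ meeting no flat other than $F$; since $\eta_F\cap\Omega=F$ by proper embeddedness, the half-balls on a fixed side of $\eta_F$ overlap for nearby points of $F$, so their union is a connected subset of $\Omega\setminus\bigcup_{F'\in\mathscr{F}}F'$, hence lies in one component $W$, and every point of $F$ lies in $\overline{W}$. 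With that addition your proof is complete; the paper's own proof glosses over the analogous point (that a flat touching $\partial W$ lies entirely in $\partial W$) at a comparable level of detail.
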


\begin{proof}
Suppose there were $F\in \mathscr{F}_{n\partial}$. Then $F$ is not part of a foliated region by Proposition \ref{prop: isolated or foliated} and Corollary \ref{corollary: no completely foliated omega}. Thus a point $x\in F$ has that any neighborhood of it contains a point not in a flat. Hence, $x$ is a limit of points in boundary flats, but $\mathscr{F}_\partial$ does not accumulate (Proposition \ref{prop: boundary flats don't accumulate}), and $F$ could not have existed at all.

This shows that $\mathscr{F}_{n\partial}=\emptyset$, and that $\mathscr{F}=\mathscr{F}_\partial$. To see that $p(\mathscr{F})$ is a union of virtual $(d-1)$-tori in $M$, note that we now know that every flat is a boundary flat, and so Corollary \ref{cor: boundary flats isolated} completes the proof.
\end{proof}

\section{Decomposition of convex divisible sets}\label{section: main theorem}

\subsection{Main Result}

Having now done most of what we set out to do, we may now package the conclusions of Section \ref{section: the hard work} into the main theorem. Corollary \ref{cor: no non-boundary flats and finite union of tori} states that $p(\mathscr{F})$ is a finite union of virtual tori, so let us call this collection of submanifolds $\{T_i\}_{i=1}^k$. The only challenge that remains is to show that the components of $M \setminus \{T_i\}_{i=1}^k$ are cusped convex projective manifolds. To do this, we will require special neighborhoods of these tori which have smooth and strictly convex boundary. We use the following definition.

\begin{definition}[Standard neighborhoods of flats]
Let $F\in\mathscr{F}$ be a flat. Let $D_F$ be the diagonalizable subgroup of $\PGL(d+1,\R)$ fixing the vertices of $F$ and $\hat{F}$. Let $H_F$ be the subgroup of $D_F$ which is isomorphic to $\R^{d-1}$ and contains $stab(F)$ as a lattice. When $x$ is a point in $\Omega\setminus F$ we will call the interior of the convex hull of $F$ and $H_F.x$ a \emph{standard neighborhood} of $F$.
\end{definition}

Examining the $H_F$ orbit of a point shows that standard neighborhoods have smooth boundary. Additionally, a standard neighborhood is contained in $\Omega$ for any $x\in \Omega$. Note that a standard neighborhood is not actually a topological neighborhood of $F$ as a subset of $\Omega$, as $F$ is in the boundary of a standard neighborhood.

Now we are ready for the main theorem.

\begin{theorem}[Main Result]\label{theorem: main theorem}
Let $d\geq 3$ and $\Omega\subset\RP^d$ an irreducible convex divisible domain, with $\Gamma$ dividing $\Omega$. Then the set $\mathscr{F}$ of properly embedded codimension-1 simplices (flats) in $\Omega$ is a discrete set with its geometric topology. The image of $\mathscr{F}$ in $M=\Gamma\backslash \Omega$ is a finite disjoint collection of virtual $(d-1)$-tori properly and incompressibly embedded in $M$. When $\mathscr{F}$ is not empty, the quotient of each component of $\Omega\setminus \mathscr{F}$ by its stabilizer is a cusped convex projective manifold with only type $d$ cusps.
\end{theorem}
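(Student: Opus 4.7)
The plan is to package the work of Section 4 as follows. The first two assertions (discreteness of $\mathscr{F}$ and the image being a finite disjoint union of virtual $(d-1)$-tori) are essentially the content of Corollary \ref{cor: no non-boundary flats and finite union of tori}, so I would cite that directly. Proper embedding of each $T_i = p(F)$ is immediate since $stab(F)$ is virtually $\Z^{d-1}$ and acts cocompactly on $F \cong \Delta^{d-1}$ by Proposition \ref{prop: boundary full rank}, so $T_i$ is compact (hence closed) in $M$. Incompressibility follows because $F$ is contractible and the inclusion $T_i \hookrightarrow M$ lifts to the inclusion $F \hookrightarrow \Omega$ of universal covers, which is $\pi_1$-injective on the nose.

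The substantive work is the cusped decomposition of each complementary component. Fix a component $W$ of $\Omega \setminus \bigcup_{F \in \mathscr{F}} F$ and let $\Gamma_W = stab(W)$. Since $p(\mathscr{F})$ has finitely many components and $\Gamma$ acts cocompactly, the boundary flats of $W$ comprise finitely many $\Gamma_W$-orbits $\Gamma_W.F_1,\dots,\Gamma_W.F_m$, with each $F_i$ stabilized inside $\Gamma_W$ by $\Gamma_{F_i}$ (virtually $\Z^{d-1}$ by Proposition \ref{prop: boundary full rank}). For each $i$ choose a point $x_i \in W$ close to $F_i$ along the normal line and form the standard neighborhood $N_i$ of $F_i$ based at $x_i$. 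Because $H_{F_i} \cong \R^{d-1}$ is diagonal and contains $\Gamma_{F_i}$ as a cocompact lattice, the boundary hypersurface $H_{F_i}.x_i$ of $N_i$ descends to a compact embedded hypersurface in $\Gamma_{F_i}\backslash N_i$, and $\Gamma_{F_i}\backslash \overline{N_i}$ is homeomorphic to $(\Gamma_{F_i}\backslash F_i)\times [0,1)$, matching the product form of Definition \ref{definition: cusped conv proj manifold}. By taking each $x_i$ sufficiently deep (that is, close enough to $F_i$ along the normal direction), the translates $\Gamma_W.N_i$ are pairwise disjoint in $W$ and each $N_i$ is precisely invariant with stabilizer $\Gamma_{F_i}$, so the quotient $C_i = \Gamma_{F_i}\backslash N_i$ embeds in $\Gamma_W\backslash W$ as an end.

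I would then verify the remaining conditions of Definitions \ref{definition: cusped conv proj manifold} and Corollary \ref{corollary: diagonal hol means type n}. Strict convexity of $N_i$ (required for it to be a cusp in the sense of \cite{Cooper-Long-Tillmann}) comes from the fact that $H_{F_i}.x_i$ is a smooth orbit of a diagonal $\R^{d-1}$-action transverse to the normal line $n_{F_i}$, and the convex hull of such an orbit with $F_i$ has strictly convex smooth boundary away from $F_i$; the holonomy $\Gamma_{F_i}$ is virtually abelian and, crucially, sits inside the \emph{diagonalizable} subgroup $D_{F_i}$, so by Corollary \ref{corollary: diagonal hol means type n} each $C_i$ is a type-$d$ cusp. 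Finally, the core $\Gamma_W\backslash\bigl(W \setminus \bigcup_i \Gamma_W.N_i\bigr)$ is compact: it is the image in the compact manifold $M$ of the complement of an open $\Gamma$-invariant neighborhood of $\bigcup_{F \in \mathscr{F}} F$, hence a closed subset of $M$.

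The main obstacle I anticipate is the geometric verification that the standard neighborhoods really do furnish strictly convex cusp regions meeting the \cite{Cooper-Long-Tillmann} definition, and that with a single deep enough choice of $x_i$ one simultaneously achieves precise invariance of $N_i$, disjointness of $\Gamma_W$-translates, and cocompactness of the core complement. This requires combining Lemma \ref{lemma: normal projection d decreasing} (distance non-increasing normal projection, which forces disjointness of translates once $x_i$ is near $F_i$) with the cocompactness of $\Gamma$ on $\Omega$ to uniformly control the size of the neighborhoods across all orbits $\Gamma_W.F_i$.
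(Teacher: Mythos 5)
Your proposal is correct and follows essentially the same route as the paper: cite the discreteness results of Section \ref{section: the hard work}, then build standard neighborhoods of the flats chosen small (deep) enough to be disjoint and precisely invariant, identify their quotients as strictly convex cusps whose diagonalizable holonomy makes them type $d$ via Corollary \ref{corollary: diagonal hol means type n}, and observe the complementary core is compact. The paper packages the disjointness via a single uniform $r$ with standard neighborhoods inside the $r$-neighborhoods rather than per-component choices of basepoints $x_i$, but this is only a cosmetic difference.
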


\begin{proof}
The combination of Corollaries \ref{prop: isolated or foliated} and \ref{corollary: no completely foliated omega} shows that $\mathscr{F}$ is a discrete set. This implies that the quotient of each $F\in \mathscr{F}$ is a compact manifold as $\mathscr{F}$ is closed, and by the discussion in Subsection \ref{subsection: flats are almost Euclidean}, that each is virtually a $(d-1)$-torus. Finiteness of this collection is due to the compactness of $M$ and discreteness of $\mathscr{F}$.

Let $p: \Omega \rightarrow M$ be the quotient map to $M$, and let $\{T_i\}_{i=1}^k$ be the collection of virtual tori making up the image of $p(\mathscr{F})$. To show the last claim, that $M$ decomposes into a collection of cusped convex projective manifolds, consider some component $X$ of $M\setminus \cup (T_i\in p(\mathscr{F}))$, and take $\tilde{X}$ to be some component of $\Omega\setminus \cup (F' \in \mathscr{F})$ which covers $X$. There is some positive number $r$ so that for all $F_1,F_2\in \mathscr{F}$, the $r$-neighborhoods $N_r(F_1)\cap N_r(F_2)=\emptyset$. For each $F$, take $N(F)$ to be a standard neighborhood of $F$ which is contained in $N_r(F)$. It is clear that such a standard neighborhood exists by considering a fundamental domain $\Delta_F$ for the action of $stab(F)$ on $F$ and the cone between $\Delta_F$ and $\hat{F}$. The neighborhoods $N(F)$ are then disjoint and each is precisely invariant with stabilizer $stab(F)$.  Thus, the image $p(N(F)) \cap X$ is homeomorphic to $C\times \R_{\geq 0}$, where $C$ is a compact (Euclidean) manifold of dimension $n-1$. Furthermore, $C$ is strictly convex because $N(F)$ is. Hence, by the definition of a convex projective cusp, $p(N(F)) \cap X$ are each cusps, and they are each type $n$ by Corollary \ref{corollary: diagonal hol means type n}. The closure of $X\setminus (int(p(N(F))) \cap X)$ is compact, so by Definition \ref{definition: cusped conv proj manifold} we have that the components of $M\setminus \cup (T_i\in p(\mathscr{F}))$ are cusped convex projective manifolds with type $d$ cusps.

\end{proof}

\section{Further question}

It is important to note that it is only known that compact convex projective manifolds with properly embedded codimension-$1$ simplices exist in dimensions less than or equal to seven. These examples come from Coxeter polytopes and are described in the last section of Benoist \cite{Benoist}. 

There is, then, an obvious question about the existence of such manifolds in dimensions greater than seven. One promising method would be to produce a cusped convex projective manifold with only type $d$ cusps. The double of this manifold along its boundary would then be compact, and the cusps would become flats. See \cite{Ballas-Danciger-Lee} for an explanation of this procedure in dimension $3$.

Unfortunately, it is also not known whether or not there exist cusped convex projective manifolds with \textit{any} type $d$ cusps in dimension greater than seven, and those in dimensions four through seven come from decomposing the compact examples from Coxeter polytopes. There are some results on deforming cusped hyperbolic manifolds to change the cuspidal holonomy \cite{Bobb, Ballas-Marquis, Ballas-Danciger-Lee}. The main result of \cite{Bobb} is that there is a manifold in every dimension with a cusp which can be of any type up to $(d-1)$, but not type $d$.






\section{Acknowledgements}
The author thanks his advisor, Jeff Danciger, for helpful discourse and guidance. He also extends his thanks to Florian Stecker, Ludovic Marquis, and Harrison Bray for suggestions, and to Gye-Seon Lee for his careful reading of an early draft.

\newpage

\bibliographystyle{alpha}
\bibliography{CDDecomposition}

\end{document}